\documentclass[11pt,reqno]{amsart}

\usepackage{amsmath,amssymb,color,accents}
\usepackage{graphicx,color}
\usepackage[colorlinks=true, pdfstartview=FitV, linkcolor=magenta, citecolor=magenta, urlcolor=blue]{hyperref}
\usepackage[nameinlink,capitalise]{cleveref}
\usepackage{comment}
\usepackage{mathtools}
\usepackage[all]{xy}
\usepackage{ytableau}
\usepackage{amsfonts}
\usepackage{longtable}
\usepackage{enumerate}
\usepackage{tikz}
\usepackage{multicol}
\usepackage{tabularx}
\usepackage{enumitem}

\usetikzlibrary{decorations.pathmorphing,shapes}
\tikzset{snake it/.style={decorate, decoration=snake}}

\usepackage{tikz}
\usepackage[all]{xy}
\usetikzlibrary{matrix,cd}

\tikzset{
  treenode/.style = {align=center, inner sep=0pt, text centered,solid,thin,
    font=\sffamily},
  arn_n/.style = {treenode, circle, white, font=\sffamily\bfseries, draw=black,
    fill=black, text width=.5em},
  arn_nl/.style = {treenode, circle, white, font=\sffamily\bfseries, draw=black,
    fill=black, text width=1.5em},  
  arn_r/.style = {treenode, circle, red, draw=red, 
    text width=.5em, very thick},
  arn_v/.style = {treenode, circle, black, font=\sffamily\bfseries, draw=black, text width=1.2em},
  arn_x/.style = {treenode, rectangle, draw=black,
    minimum width=.5em, minimum height=0.5em},
  dott/.style={edge from parent/.style={dotted, very thick,circle,draw}},
  emph/.style={edge from parent/.style={dashed, very thick,circle,draw}},
  norm/.style={edge from parent/.style={solid,thin,circle,draw}}
}

\usepackage{tabstackengine}

\usepackage{listings}

\let\amsamp=&
\usepackage{bbm}
\newcommand{\bigzero}{\mbox{\normalfont\Large\bfseries 0}}

\newcommand{\sbm}[1]{{\let\amp=&\begin{pmatrix}#1\end{pmatrix}}}

\numberwithin{equation}{section}
\hyphenation{semi-stable}
\newcommand{\wps}{weighted projective space }
\newcommand{\gdp}{general double point }

\newtheorem{theorem}{Theorem}[section]
\newtheorem{lemma}[theorem]{Lemma}
\newtheorem{proposition}[theorem]{Proposition}
\newtheorem{corollary}[theorem]{Corollary}

\newtheorem{problem}[theorem]{Problem}
\newtheorem*{theorem*}{Theorem}
\newtheorem*{theorem123}{Theorem \ref{AH123}}
\newtheorem*{theoremExcep}{Theorem \ref{Theorem: 1bc bound}}
\newtheorem*{theoremKpoints}{Proposition \ref{suffkpt}}

\newtheorem*{theoremTERA}{Theorem \ref{TERA}}
\newtheorem*{LemmaChandler}{Lemma \ref{C13}}
\newtheorem*{TheoremSecantVariety}{Theorem \ref{equiv}}
\newtheorem*{WeightedLine}{Proposition \ref{GK}}
\newtheorem*{SimplePointInterpolation}{Theorem \ref{prop: simple point interpolation}}
\newtheorem*{NECK}{Proposition \ref{NECK}}

\theoremstyle{definition}
\newtheorem{definition}[theorem]{Definition}

\theoremstyle{remark}
\newtheorem{remark}[theorem]{Remark}
\newtheorem{example}[theorem]{Example}

\newtheorem{notation}[theorem]{Notation}

\newcommand{\bpf}{\begin{proof}}
\newcommand{\epf}{\end{proof}}
\newcommand{\bpr}{\begin{proposition}}
\newcommand{\epr}{\end{proposition}}
\newcommand{\bdf}{\begin{definition}}
\newcommand{\edf}{\end{definition}}
\newcommand{\blm}{\begin{lemma}}
\newcommand{\elm}{\end{lemma}}
\newcommand{\bex}{\begin{example}\rm }
\newcommand{\eex}{\end{example}}
\newcommand{\bcor}{\begin{corollary}}
\newcommand{\ecor}{\end{corollary}}
\newcommand{\bthm}{\begin{theorem}}
\newcommand{\ethm}{\end{theorem}}
\newcommand{\be}{\begin{enumerate}}
\newcommand{\ee}{\end{enumerate}}
\newcommand{\bq}{\begin{equation}}
\newcommand{\eq}{\end{equation}}
\newcommand{\bb}{\begin{itemize}}
\newcommand{\eb}{\end{itemize}}
\newcommand{\bpw}{\begin{cases}}
\newcommand{\epw}{\end{cases}}


\DeclareMathOperator{\ev}{ev}
\DeclareMathOperator{\Proj}{Proj}

\DeclareMathOperator{\lcm}{lcm}

\DeclareMathOperator{\Spec}{Spec}

\DeclareMathOperator{\Hom}{Hom}
\DeclareMathOperator{\gr}{gr}
\DeclareMathOperator{\Char}{char}
\DeclareMathOperator{\Min}{Min}


\newcommand{\fp}{{\mathfrak p}}
\newcommand{\bs}{\backslash}
\renewcommand{\d}{{\rm d}}

\newcommand{\fm}{{\mathfrak m}}

\newcommand{\N}{{\mathbb{N}}}

\newcommand{\T}{{\mathbb{T}}}
\newcommand{\Z}{{\mathbb Z}}
\renewcommand{\AA}{{\mathbb A}}
\newcommand{\PP}{{\mathbb P}}
\numberwithin{equation}{section} 
\newcommand{\AH}{\text{AH}}
\newcommand{\sat}{\text{sat}}
\newcommand{\symb}{^{(2)}}
\newcommand{\LF}{\left\lfloor}
\newcommand{\RF}{\right\rfloor}
\newcommand{\RC}{\right\rceil}
\newcommand{\LC}{\left\lceil}

\title{
Interpolation in Weighted Projective Spaces}

\author[S. ROSHAN-ZAMIR]{Shahriyar Roshan-Zamir}
\address{Department of Mathematics, University of Nebraska-Lincoln, 1144 T St
Lincoln, NE 68588}
\email{sroshanzamir2@huskers.unl.edu}

\begin{document}

\thanks{The author received support from the National Science Foundation  grant DMS-210225.}

\thanks{Mathematics Subject Classification: 13A02, 13F20, 14N05.}
\thanks{Key Words: Interpolation, Hilbert Function, Weighted Projective Space, Non-standard Grading.}

\begin{abstract}
Over an algebraically closed field, the {\it double point interpolation} problem asks for the vector space dimension of the projective hypersurfaces of degree $d$ singular at a given set of points. 
After being open for 90 years, a series of papers by J. Alexander and A. Hirschowitz in 1992--1995 settled this question in what is referred to as the Alexander-Hirschowitz theorem. In this paper we primarily use commutative algebra to lay the groundwork necessary to prove analogous statements in the {\it weighted projective space}, a natural generalization of the projective space. We prove the Hilbert function of general simple points in any $n$-dimensional weighted projective space exhibits the expected behavior. We also introduce an inductive procedure for weighted projective space, similar to that originally due to A. Terracini from 1915, to demonstrate an example of a weighted projective plane where the analogue of the Alexander-Hirschowitz theorem holds without exceptions and prove our example is the only such plane. Furthermore, Terracini's lemma regarding secant varieties is adapted to give an interpolation bound for an infinite family of weighted projective planes. 
\end{abstract}

\maketitle

\setcounter{tocdepth}{1} 
\tableofcontents

\section{Introduction}

Let $k$ be an algebraically closed field. Consider  the vector space of homogeneous polynomials of degree $d$ in $n+1$ variables having singularities at $r$ general points $p_1,\ldots, p_r$ of the projective space $\PP^n_k$.  For a polynomial $f$ to be a member of this vector space it must satisfy $r(n+1)$ linear conditions on its coefficients imposed by the vanishing of all partial derivatives of $f$ at $p_1,\ldots, p_r$. It is natural to inquire whether these conditions are independent. This problem was considered by A.~Hirschowitz and J.~E.~Alexander in a series of papers \cite{Hirschowitz85, Alexander88, AH92b, AH92a, AH95} and finally settled in the form stated below, where $H_d(-)$ denotes the dimension of the vector subspace of homogeneous forms of degree $d$ of the input.

\bthm[Alexander-Hirschowitz]
\label{thm: AH}
Let $n,d$ be positive integers. Let $X$ be a general set of $r$ double points in $\PP^n_k$ and $I={I_{X}}$ be its defining ideal. Then 
\begin{equation} \label{eq: AH main}
    H_d(R/I)=\min \left \{r(n+1), {{n+d}\choose{d}} \right \}
\end{equation}
with exceptions:
\begin{itemize}
    \item $d=2$ and $2 \leq r \leq n$;
    \item $d=3$, $n=4$ and $r=7$; and
    \item $d=4$, $2 \leq n \leq 4$ and $r={n+2 \choose 2}-1$.
\end{itemize}
\ethm 
We say that a set of double points $X$ has {\em expected dimension in degree $d$} if its ideal $I$ satisfies \Cref{eq: AH main}. \Cref{thm: AH} has two equivalent formulations in terms of the dimension of higher secant varieties to the Veronese embedding of $\PP^n$ and the Big Waring Rank problem (\cite[Appendix A]{HM21}) as well as applications in  numerical algebraic geometry \cite{NumericalAG}, deep polynomial neural networks \cite{NeuralNetwork}, and algebraic statistics \cite[Section 6.2]{AlgebraicStatistics}. The original proof of \Cref{thm: AH} is based on a degeneration technique called the ``la m\'{e}thode d'Horace diff\'{e}rentialle'', or m\'{e}thode d'Horace for short, which is based on an earlier degeneration technique by A. Terracini (\Cref{TERA}).
M\'{e}thode d'Horace was later simplified by K.~A.~Chandler \cite{Chandler01, Chandler02}. Further expositions and surveys on the Alexander-Hirschowitz theorem and related problems can be found in \cite{BO08, HM21}. 

Our main goal is to give an analogous statement to \Cref{thm: AH} in the weighted projective space denoted $\PP(a_0,\ldots,a_n)$, where each $a_i$ is a positive integer. This space, surveyed in \cite{DolgachevWPS, WPSBeltRobb}, arises naturally in contexts such as toric geometry \cite{WPSToric}, algebraic topology \cite{WPSAlgTop}, and $K$-theory \cite{WPSKTheory}.  When $a_i=1$ for all $i$, one can use \Cref{def: wps} to recover the  usual or straight projective space. The author's main approach has been to modify existing techniques from the usual projective space to fit this new setting. Such adaptions require careful, and often non-trivial, considerations of details. This is best exemplified in \Cref{App: Secant Variety}.

In \Cref{section: wps} we introduce the weighted projective space, its basic properties and the defining ideal of points. \Cref{ptsplne} shows that unlike the projective space, even a general point in the weighted projective plane need not be generated by a complete intersection ideal; a fact which is critical to computing $I$ in \Cref{thm: AH}. However, having a weight of $1$ forces the ideals of general points to be complete intersections, \Cref{POINTS ARE CI}, hence allowing for easy calculations of their ideals. 

\Cref{s:line} concerns an analogue to \Cref{thm: AH} in the weighted projective line and shows considering this property passes a litmus test, it holds in dimension $1$. Its main result is \Cref{GK} which we now state. 
\begin{WeightedLine}
A general set of points with mixed multiplicity in $\PP(a,b)$, where $\gcd(a,b)=1$, has expected dimension in degree $d$ for all $d\in \N$.    
\end{WeightedLine}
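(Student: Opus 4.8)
The plan is to exploit the fact that the homogeneous coordinate ring $R = k[x,y]$ of $\PP(a,b)$ is a unique factorization domain, irrespective of the grading, which forces the ideals of points to be principal. First I would show that for a general (hence non-coordinate) point $p = [\alpha:\beta]$, the defining ideal $I_p$ is generated by a single irreducible form $F_p$. Since $p$ is a codimension-one point of the one-dimensional scheme $\Proj R$, the associated homogeneous prime has height one in the two-dimensional UFD $R$ and is therefore principal. Its generator is a minimal-degree element of $I_p$, and the smallest degree $d$ with $\dim_k R_d \ge 2$ is, using $\gcd(a,b)=1$, exactly $ab$, where $R_{ab} = \Span_k\{x^b, y^a\}$ and $F_p = \beta^a x^b - \alpha^b y^a$; coprimality is precisely what makes this binomial irreducible. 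Thus $I_p = (F_p)$ with $\deg F_p = ab$.

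Next I would pass from reduced to fat points. Because $(F_p)$ is a principal prime and $p$ is a smooth point of $\PP(a,b)$, its $m$-th symbolic power, defining the multiplicity-$m$ point, is $I_p^{(m)} = (F_p^m)$. For a general configuration $X = m_1 p_1 + \cdots + m_r p_r$ the forms $F_{p_1},\dots,F_{p_r}$ are pairwise non-associate irreducibles, so in the UFD $R$
\begin{equation*}
I_X = \bigcap_{i=1}^{r} (F_{p_i}^{m_i}) = \left(\prod_{i=1}^{r} F_{p_i}^{m_i}\right) = (G), \qquad \deg G = ab\,M, \quad M := \sum_{i=1}^{r} m_i.
\end{equation*}
In particular $I_X$ is again principal, and the short exact sequence $0 \to R(-abM) \xrightarrow{\,\cdot G\,} R \to R/I_X \to 0$ yields
\begin{equation*}
H_d(R/I_X) = H_d(R) - H_{d-abM}(R).
\end{equation*}

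Finally I would identify this difference with the expected value $\min\{M, H_d(R)\}$ by a direct lattice-point count. The nonnegative solutions $(i,j)$ of $ai+bj=d$ share a fixed residue $i \equiv i_0 \pmod b$ and are listed by $i = i_0, i_0+b, i_0+2b, \dots$, with $j$ decreasing by $a$ at each step; the assignment $(i',j') \mapsto (i'+bM, j')$ is a bijection from the solutions of $ai+bj = d-abM$ onto those of $ai+bj=d$ with $i \ge bM$. Hence $H_d(R) - H_{d-abM}(R)$ counts exactly the solutions with $i < bM$, namely the first $\min\{M, H_d(R)\}$ entries of the list, which is the expected dimension.

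I expect the degree of the principal generator, rather than any single deep step, to be where the care is needed: the general-position hypothesis must be used simultaneously to ensure that every $p_i$ is a smooth non-coordinate point with $\deg F_{p_i} = ab$, so that no common factors or degree drops occur in $G$, and that $(F_{p_i}^{m_i})$ genuinely cuts out the multiplicity-$m_i$ point. Once the principal-ideal description of $I_X$ is secured, the Hilbert function is completely forced, which explains why $\PP(a,b)$ passes the litmus test with no exceptional cases, in contrast to the higher-dimensional failure of the complete-intersection property recorded in \Cref{ptsplne}.
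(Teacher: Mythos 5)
Your proof is correct, and its first half coincides with the paper's own argument: the reduction of $I_p$ to the principal prime generated by the degree-$ab$ binomial (this is \Cref{defid}), the identification of the fat-point ideal with the corresponding power of that binomial via the complete-intersection/principal-prime property, and the factorization $I_X = \left(\prod_i F_{p_i}^{m_i}\right)$ are exactly the steps in the paper's proof of \Cref{GK}. The two arguments diverge only in the terminal Hilbert-function computation. The paper replaces $I_X$ by its initial ideal $(u^{ar})$, where $r=\sum r_i$, so that $H_d(S/I_X)$ counts solutions of $ax+by=d$ subject to $0\le y\le ar-1$, and then splits into two cases: for $d< b(ar-1)$ the constraint is vacuous, while for $d\ge b(ar-1)$ the count reduces to counting representatives of a fixed residue class modulo $a$ in the interval $[1,ar]$, giving exactly $r$; the minimum is then recovered by appealing to \Cref{rmk: minimum}. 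You instead read the Hilbert function off the graded resolution $0 \to R(-abM) \to R \to R/I_X \to 0$ and evaluate $s_d - s_{d-abM}$ by the translation bijection $(i',j')\mapsto (i'+bM,j')$, which identifies the difference with the number of solutions having $i<bM$, i.e.\ the first $\min\{M,s_d\}$ terms of the arithmetic progression of solutions. This produces the minimum in one stroke, with no case analysis; the paper's finish, in exchange, exhibits an explicit monomial basis of $S/I_X$ and the exact degree $b(ar-1)$ at which the Hilbert function stabilizes at $\sum r_i$. Both endgames are elementary and equivalent in content, so this is a genuine but modest divergence confined to the final counting step.
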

Here, mixed multiplicity refers to having different orders of singularity at the points. While for $n=1$ \Cref{thm: AH} does not require the assumption that the points are general, in contrast, our result excludes the point $[0:1]$, hence the hypothesis of \cref{GK} requires general points.

\Cref{section: interpolation in \wps} is dedicated to proving \Cref{prop: simple point interpolation} which concerns simple point interpolation in $\PP(a_0,a_1,\ldots,a_n)$. 
The precise result is as follows.
\begin{SimplePointInterpolation}
Let $X \subseteq \PP(a_0,a_1,\ldots,a_n)$ be a set of $r$ general simple points with defining ideal $I_X$. Then for all $d \in \N$ we have
\begin{equation*} 
    H_d(S/I_X)=\min \{ r, H_d(S)\}.
\end{equation*}
\end{SimplePointInterpolation}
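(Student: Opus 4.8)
The plan is to prove $H_d(S/I_X) = \min\{r, H_d(S)\}$ by establishing, as always in interpolation problems, two complementary inequalities. The upper bound $H_d(S/I_X) \leq \min\{r, H_d(S)\}$ is automatic: since $S/I_X$ is a quotient of $S$ we have $H_d(S/I_X) \leq H_d(S)$, and since each of the $r$ simple points imposes one linear condition (evaluation of a degree-$d$ form at the point) the dimension of $(S/I_X)_d$ is at most $r$. The entire content is therefore the reverse inequality, i.e. showing that the $r$ evaluation conditions are \emph{independent} whenever $r \leq H_d(S)$, equivalently that the evaluation map
\begin{equation*}
    \ev_X \colon S_d \longrightarrow k^r, \qquad f \longmapsto (f(p_1), \ldots, f(p_r))
\end{equation*}
is surjective for general points $p_1,\ldots,p_r$ as soon as $\dim_k S_d = H_d(S) \geq r$.

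\medskip

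\noindent\textbf{Main strategy.} Because surjectivity of $\ev_X$ is an open condition on the configuration $(p_1,\ldots,p_r)$, it suffices to exhibit a \emph{single} configuration of $r$ points for which the map has rank $r$; generality then guarantees it for a general configuration. I would proceed by induction on $r$. The base case $r=1$ is clear since there is a nonzero form of degree $d$ not vanishing at a chosen point (as $H_d(S) \geq 1$). For the inductive step, assume $\ev_{\{p_1,\ldots,p_{r-1}\}}$ is surjective onto $k^{r-1}$ for some choice of points with $r-1 < H_d(S)$; the image of $\ev_{\{p_1,\ldots,p_{r-1},p_r\}}$ is then a subspace of $k^r$ containing $k^{r-1}\times\{0\}$ (after restricting to forms vanishing at $p_1,\ldots,p_{r-1}$ and using that the first $r-1$ coordinates can be realized), so it remains only to produce one degree-$d$ form vanishing at $p_1,\ldots,p_{r-1}$ but not at $p_r$. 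Equivalently, I must show the ideal $(I_{\{p_1,\ldots,p_{r-1}\}})_d$ is not entirely contained in the ideal of $p_r$; since $r-1 < H_d(S)$ the space $(I_{\{p_1,\ldots,p_{r-1}\}})_d$ is nonzero, and a general point $p_r$ avoids the base locus of this nonzero linear system.

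\medskip

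\noindent\textbf{The key obstacle.} The delicate point, and the one genuinely special to the weighted setting, is the very last step: I must ensure that a general point $p_r \in \PP(a_0,\ldots,a_n)$ does not lie on the common vanishing locus of \emph{every} degree-$d$ form in $(I_{\{p_1,\ldots,p_{r-1}\}})_d$. In ordinary projective space this is immediate because $S_d$ is basepoint-free for $d \geq 1$, but in $\PP(a_0,\ldots,a_n)$ the graded piece $S_d$ can have unexpected base loci --- indeed $S_d = 0$ for many small $d$, and even when $S_d \neq 0$ the monomials of degree $d$ need not separate all points. I therefore need to verify that the only points where all degree-$d$ forms vanish form a proper closed subset (so that a general point avoids it), using that $H_d(S) \geq r \geq 1$ forces at least one nonvanishing monomial at a general point, together with the explicit description of the defining ideal of points developed in \Cref{section: wps}. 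The inductive bookkeeping --- guaranteeing that the previously chosen points remain general and that the linear system $(I_{\{p_1,\ldots,p_{r-1}\}})_d$ stays nonzero as long as $r-1 < H_d(S)$ --- is routine, so the weighted-geometry input governing base loci of $S_d$ is where the real work lies.
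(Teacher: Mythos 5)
Your per-degree argument proves a weaker statement than the theorem. What you establish is: for each \emph{fixed} $d$, the configurations $(p_1,\ldots,p_r)$ for which $\ev_X$ has full rank in degree $d$ contain a nonempty (hence dense) open subset $C_d$ of the parameter space $\PP(a_0,\ldots,a_n)^r$. But the statement requires a \emph{single} general configuration satisfying the formula for \emph{all} $d\in\N$ simultaneously, i.e.\ a configuration in $\bigcap_{d\in\N} C_d$ --- an infinite intersection of open sets. Such an intersection need not be open, and over a countable algebraically closed field (e.g.\ $\overline{\mathbb{F}}_p$ or $\overline{\Q}$, which the hypotheses allow) a countable intersection of dense opens can even be empty, so ``generality then guarantees it for a general configuration'' does not go through. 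Your construction makes the dependence on $d$ explicit: the new point $p_r$ is chosen to avoid $V(f)$ for some nonzero $f\in (I_{\{p_1,\ldots,p_{r-1}\}})_d$, and $f$ depends on $d$. This is exactly the difficulty the paper's proof spends its second half on. There, using well-formedness (so $\gcd(a_0,\ldots,a_n)=1$) and the fact that the Hilbert function of $S$ is an eventually increasing quasi-polynomial, one finds $t$ with $r\leq s_d$ for all $d\geq t-a_n$; then for a configuration $Y$ in the \emph{finite} intersection $\bigcap_{0\leq D\leq t+a_n}C_D$, chosen off the coordinate hyperplanes so that some $x_j$ is a nonzerodivisor on $S/I_Y$, the identity $H_d(S/I_Y)=H_{d-a_j}(S/I_Y)+H_d(S/(I_Y+x_j))$ of \Cref{equation: HF-NZD} forces $H_d(S/(I_Y+x_j))=0$ on a window of degrees of length $a_n$, and cyclicity of $S/(I_Y+x_j)$ propagates this vanishing to all higher degrees, so $H_d(S/I_Y)$ stabilizes at $r$. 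Some such stabilization argument, reducing the problem to finitely many degrees, is what your proposal is missing.

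Two smaller points. First, the obstacle you single out as ``where the real work lies'' --- base loci of $S_d$ in the weighted setting --- is not actually an obstacle: once $(I_{X'})_d\neq 0$ you only need one nonzero form $f$ in it, and a point $p_r\notin X'\cup V(f)$ exists by the Nullstellensatz; no basepoint-freeness of $S_d$ is required, and this is how the paper handles it in one line. Second, your reduction of ``the entire content'' to surjectivity when $r\leq H_d(S)$ silently skips the case $r>s_d$, where one must show $(I_X)_d=0$; this does follow from the full-rank case applied to a sub-configuration of $s_d$ of the points (as in the paper's step ``if $s_D\leq r-1$, choose $p\notin X'$''), but it needs to be said.
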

The proof of \Cref{prop: simple point interpolation} in the usual projective space is accessible to an advanced undergraduate student; see \cite{JackHuizenga}. But a proof in the setting above is non-trivial and requires tempering with parameter spaces. 

In \Cref{section: weighted projective plane} we study sets of double points in weighted projective planes of the form $\PP(1,b,c)$. 
The main novelty of this section is taking a different view point towards the problem: instead of studying interpolation in a {\it fixed space}, we start with a \textit{fixed size} for a set of general double points, say $r$, and find examples of spaces in which $r$ general double points have expected dimension in every degree $d$. \Cref{NECK} proves a necessary condition for this to hold.
\begin{NECK}
If $r$ general double points in $\PP(1,b,c)$ have expected dimension in every degree, then $c<(r+1)b$ for $b\neq 1$ and $c\leq r+1$ when $b=1$.
\end{NECK}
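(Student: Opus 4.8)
The plan is to exploit the non-standard grading directly: the decisive feature of $\PP(1,b,c)$ is that the partial derivative $\partial_{x_2}$ has degree $-c$, so the ``$x_2$-directional'' conditions imposed by the double points live in an unexpectedly low-dimensional space. First I would pass to the affine chart $x_0=1$, which (because the first weight is $1$) is honestly $\AA^2$ with coordinates $(x_1,x_2)$, so a general set of points becomes a general set $p_1,\dots,p_r\in\AA^2$. Dehomogenising, the degree-$d$ piece $S_d$ is identified with $V_d:=\langle x_1^\beta x_2^\gamma : b\beta+c\gamma\le d\rangle$, so that $H_d(S)=\dim V_d$, and the double point at $p_i$ imposes the three linear conditions $\tilde f(p_i)=\partial_{x_1}\tilde f(p_i)=\partial_{x_2}\tilde f(p_i)=0$. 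Since the ideal in degree $d$ is exactly the common kernel of these $3r$ functionals on $V_d$, we get $H_d(S/I_X)=\operatorname{rank}$ of the $3r$ functionals, and ``expected dimension'' in degree $d$ means this rank equals $\min\{3r,\dim V_d\}$.

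The heart of the argument is the following observation. Because $\partial_{x_2}$ lowers degree by $c$, it restricts to a map $\partial_{x_2}\colon V_d\to V_{d-c}$, and each functional $\tilde f\mapsto \partial_{x_2}\tilde f(p_i)$ factors as $\ev_{p_i}\circ\,\partial_{x_2}$. Hence the $r$ functionals coming from the $x_2$-derivative all factor through the single space $V_{d-c}$, so their span has dimension at most $\dim V_{d-c}=H_{d-c}(S)$. Therefore, whenever $H_{d-c}(S)<r$, these $r$ functionals are linearly dependent, which forces a nontrivial linear relation among the full collection of $3r$ functionals; thus $H_d(S/I_X)\le 3r-1$. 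If in addition $H_d(S)\ge 3r$, then the expected value is $3r$ and expected dimension fails in degree $d$. I note that this direction needs no genericity at all: the rank bound holds for \emph{every} configuration of points, which is precisely why it yields an unavoidable necessary condition.

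It then remains to produce, under the negation of the stated inequality, a single degree where both $H_{d-c}(S)<r$ and $H_d(S)\ge 3r$ hold. I would take $d=c+(r-1)b-1$. A direct lattice-point count gives $H_{d-c}(S)=r-1$ (the degree $d-c=(r-1)b-1$ is below $c$ and only admits the monomials $x_1^\beta$ with $\beta\le r-2$), while summing the two relevant $x_2$-strata gives $H_d(S)=2r-1+\lfloor (c-1)/b\rfloor$. Assuming the bound is violated, well-formedness of $\PP(1,b,c)$, i.e. $\gcd(b,c)=1$, upgrades $c\ge(r+1)b$ to the strict $c>(r+1)b$ when $b\ne 1$ — since $c=(r+1)b$ would force $b\mid c$ and contradict coprimality — while for $b=1$ the failure of $c\le r+1$ already gives $c>r+1$. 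In either case $c>(r+1)b$, equivalently $\lfloor (c-1)/b\rfloor\ge r+1$, so $H_d(S)\ge 3r$, and the obstruction of the previous paragraph applies.

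The main obstacle is conceptual rather than computational: recognising that the high weight $c$ of $x_2$ collapses the $x_2$-derivative conditions through the small space $V_{d-c}$, and then pinning down the \emph{sharp} degree $d=c+(r-1)b-1$ that simultaneously keeps $H_{d-c}(S)$ just below $r$ and pushes $H_d(S)$ up to exactly $3r$. Getting the boundary right — where the two quantities differ from their targets by a single unit — is what forces the appeal to $\gcd(b,c)=1$ and explains the asymmetry between the $b=1$ and $b\ne 1$ clauses of the statement.
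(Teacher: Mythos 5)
Your proof is correct, but it takes a genuinely different route from the paper's. The paper works in degree $2rb$: under the negated inequality $c>(r+1)b$ one has $s_{2rb}\le 3r$, so expected dimension would force $(I_X)_{2rb}=0$, and the contradiction is an \emph{explicit nonzero element} of $(I_X)_{2rb}$, namely $\prod_{i=1}^r f_i$ where $f_i\in I_{p_i}^2$ is the square of the degree-$b$ generator supplied by \Cref{ptsplne} (geometrically, a union of $r$ doubled curves $u=p_i^{(1)}z^b$ through the points). You work instead in degree $d=c+(r-1)b-1$, where $s_d=2r-1+\LF (c-1)/b\RF\ge 3r$, so expected dimension would force the $3r$ vanishing conditions to be independent, and your contradiction is a \emph{rank bound}: the $r$ functionals $\tilde f\mapsto \partial_{x_2}\tilde f(p_i)$ all factor through $S_{d-c}$, which you compute to have dimension $r-1$, so they are linearly dependent and $H_d(S/I_X)\le 3r-1$. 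The two mechanisms are dual to each other --- an unexpected section of the ideal in a degree where the expected ideal is zero, versus an unexpected linear relation among the conditions in a degree where they are expected to be independent --- and they detect the failure in different degrees ($2rb$ versus $c+(r-1)b-1$). What your route buys: it needs no description of the defining ideals of the points (only the standard dictionary identifying the homogeneous double-point conditions with membership in $\mathfrak{m}_{p_i}^2$ on the chart $x_0\neq 0$, which is available since $I_{p_i}$ is a complete intersection there), it requires no genericity beyond the points lying in that chart, and it isolates exactly which conditions degenerate. What the paper's route buys: it stays entirely in the homogeneous picture, uses only the generator degrees from \Cref{ptsplne}, and its squared-curve obstruction is the same mechanism reused later in \Cref{prop: suff exception}. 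One small point in your favor: you justify the passage from the negation $c\ge (r+1)b$ to the strict inequality $c>(r+1)b$ via well-formedness, i.e.\ $\gcd(b,c)=1$, a step the paper's proof elides when it opens with ``assume for contradiction that $c>(r+1)b$''.
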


The main result of \cref{section: weighted projective plane} is a partial converse to \Cref{NECK}.
\begin{theoremKpoints}    
Let $r \in \N$. If $rb<c<(r+1)b$ then a set of $r$ general double points in $\PP(1,b,c)$ has expected dimension in every degree $d$.
\end{theoremKpoints}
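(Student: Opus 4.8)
The plan is to recast the statement as a rank condition for a single linear map and then localize the difficulty to the one or two degrees where the rank can drop. Write $S=k[x_0,x_1,x_2]$ with $\deg x_0=1$, $\deg x_1=b$, $\deg x_2=c$, and work in the smooth chart $x_0\neq 0$, which is where general points sit. A set $X$ of $r$ general double points $2p_1,\dots,2p_r$ determines the $2$-jet evaluation map $\mathrm{ev}_d\colon S_d\to\bigoplus_{i=1}^r \mathcal{O}_{p_i}/\mathfrak{m}_{p_i}^2$, each summand of dimension $3$, and $H_d(S/I_X)=\operatorname{rank}\mathrm{ev}_d$. The assertion is therefore exactly that $\operatorname{rank}\mathrm{ev}_d=\min\{3r,H_d(S)\}$ for all $d$, where $H_d(S)=\#\{(e,f)\in\Z_{\ge 0}^2:be+cf\le d\}$ is nondecreasing in $d$. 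Thus I must prove that $\mathrm{ev}_d$ is injective whenever $H_d(S)\le 3r$ and surjective whenever $H_d(S)\ge 3r$.

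The first step reduces everything to the transition degree(s). Since each $p_i$ lies in the chart $x_0\neq0$, the class of $x_0$ is a unit in $\mathcal{O}_{p_i}/\mathfrak{m}_{p_i}^2$, so multiplication by $x_0$ induces a linear automorphism $u$ of $\bigoplus_i \mathcal{O}_{p_i}/\mathfrak{m}_{p_i}^2$ with $\mathrm{ev}_{d+1}(x_0 f)=u\cdot\mathrm{ev}_d(f)$. Consequently surjectivity of $\mathrm{ev}_d$ forces surjectivity of $\mathrm{ev}_{d+1}$, because the image of $x_0 S_d\subseteq S_{d+1}$ is $u$ applied to the full target; and injectivity of $\mathrm{ev}_d$ forces injectivity of $\mathrm{ev}_{d-1}$, because $f\in (I_X)_{d-1}$ gives $x_0 f\in(I_X)_d=0$, whence $f=0$ as $x_0$ is a nonzerodivisor. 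So surjectivity propagates upward and injectivity downward, and the theorem reduces to establishing maximal rank only at $d_0=\max\{d:H_d(S)\le 3r\}$ and $d_1=\min\{d:H_d(S)\ge 3r\}$ (with bijectivity required if these coincide).

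At the transition I intend to run a differential Horace argument along the line $H=\{x_0=0\}\cong\PP(b,c)$, which is where the hypothesis $rb<c<(r+1)b$ enters. After specializing some points onto $H$, the Castelnuovo sequence for $x_0$ reads
\[0\to (I_{\operatorname{Res}_H X'})_{d-1}\xrightarrow{\,x_0\,}(I_{X'})_d\to (I_{\operatorname{Tr}_H X',\,H})_d\to 0,\]
and by upper semicontinuity it suffices to exhibit one such specialization $X'$ for which both outer terms have the expected dimension and the dimensions add up. The trace term lives on the weighted line $\PP(b,c)$, where \Cref{GK} already furnishes the full mixed-multiplicity interpolation statement, while the residual term is a scheme of lower degree dispatched by the same method, so the whole argument is organized as an induction on $d$ anchored by the propagation above. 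The arithmetic role of $rb<c<(r+1)b$—equivalently $r=\lfloor c/b\rfloor$—is to guarantee that one can split the $3r$ conditions between a degree-$d$ trace on $H$ and a degree-$(d-1)$ residual so that each end lands in its expected regime and the two expected values sum to $\min\{3r,H_d(S)\}$; the lattice-point formulas for $H_d(S)$ and for $h^0(\mathcal{O}_H(d))$ are what make this matching an exact identity in precisely this band.

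The main obstacle is twofold, and it is exactly the difficulty flagged for the weighted setting. First, the specialization $X'$ must be chosen so that its points, once pushed onto $H=\PP(b,c)$, are general there and avoid the singular point of the line; controlling this requires genuine work with the relevant parameter space rather than naive coordinates, in the spirit of the delicacies that already appear in \Cref{prop: simple point interpolation} and in \Cref{POINTS ARE CI}. Second, the differential Horace bookkeeping—deciding, for each double point placed on $H$, whether its trace contributes a simple or a length-two scheme to the line and how the complementary conditions reappear in the residual—must be arranged so that the count is an exact equality rather than an inequality. I expect this to force the inequalities $rb<c$ and $c<(r+1)b$ to be used sharply, consistent with the failure of the conclusion at the boundary values $c=rb$ and $c=(r+1)b$ predicted by \Cref{NECK}.
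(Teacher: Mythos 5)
Your opening reduction is correct and coincides with the paper's (\Cref{remark: Weight of 1}, \Cref{remark: one degree}, \Cref{mon3k}): everything comes down to invertibility of the square $3r\times 3r$ evaluation matrix in the single critical degree $d^*=(r-1)b+c$, the smallest degree with $s_{d^*}=3r$. The gap is in how you handle $d^*$. You propose a \emph{differential} Horace argument along $H=V(x_0)\cong\PP(b,c)$, and for that line the differential version is not optional: writing $c=rb+j$ with $0<j<b$, one computes $\overline{s}_{d^*}=\dim_k k[u,v]_{d^*}=1$ (the only monomial is $u^{r-1}v$, because $(r-1)b+c=(2r-1)b+j$ is not a multiple of $b$ and is $<2c$), while $s_{d^*-1}=3r-1$. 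So specializing any $q\geq 1$ double points onto $H$ places $2q\geq 2$ trace conditions on a one-dimensional space, whereas asking the residual ($r-q$ double points plus $q$ simple points) to fill degree $d^*-1$ forces $3r-2q\geq 3r-1$, i.e.\ $q\leq 1/2$; no integer $q$ exists, which is exactly the failure of the numerical condition \Cref{eq: TERRACINI CONDITION} of \Cref{TERA} for $i=0$. Hence your argument stands or falls with the differential Horace lemma itself, and that lemma is precisely what is \emph{not} available in weighted projective space: its proof uses limits of curvilinear subschemes, which the paper states it could not adapt to this setting and records as a conjecture in \Cref{section: Open Problems}, item (5). Invoking it is a missing key lemma, not bookkeeping to be arranged later.

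The repair is to change the line. The paper's own proof works at $d^*$ with no specialization at all: since $d^*<2c$, every monomial of degree $d^*$ has $v$-exponent $0$ or $1$, so the evaluation matrix is block triangular; the $2r\times 2r$ block is the degree-$d^*$ evaluation matrix of the $r$ double points $[1:p_i^{(1)}]$ in $\PP(1,b)$, invertible by \Cref{GK}, and the complementary $r\times r$ block (the $\partial/\partial v$ rows against the monomials $z^{((r-1)-t)b}u^tv$) is a Vandermonde matrix in the distinct coordinates $p_i^{(1)}$. In Horace terms, the correct hyperplane is $V(v)\cong\PP(1,b)$, not $V(x_0)$: there $\overline{s}_{d^*}=2r$ and $s_{d^*-c}=r$, so \Cref{eq: TERRACINI CONDITION} holds with $q=r$, condition (1) of \Cref{TERA} is supplied by \Cref{GK}, and condition (2) is simple-point interpolation for $r$ collinear points in degree $(r-1)b$, again a Vandermonde computation. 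With that substitution your outline becomes correct and is essentially the paper's argument; as written, it rests on machinery that is unproven here and on a line for which the non-differential counts provably cannot balance.
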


The entirety of \Cref{123} is dedicated to proving an Alexander--Hirschowitz theorem in $\PP(1,2,3)$. 
\begin{theorem123}
    Let $X \subseteq \PP(1,2,3)$ be a set of $r$ general double points. Then $X$ has expected dimension in every degree $d$ with no exceptions. Moreover $\PP(1,2,3)$ is the only weighted projective space where no exceptions occur.
\end{theorem123}

The most striking part of \Cref{AH123} is having {\it no exceptions} which is not the case even in the usual $\PP^2$. Indeed, \Cref{thm: 123 ONLY} shows $\PP(1,2,3)$ is the only such plane. The main inductive procedure of the proof, \Cref{TERA}, is the predecessor to m\'{e}thode d'Horace; a specialization technique due to Terracini from 1915, \cite{Terra3}. In \Cref{TERA} we observe Terracini's method can be generalized to $\PP(1,\ldots,a_n)$. The difference in numerology is key; $\PP_k^n$ contains only one hyperplane up to isomorphism. But in $\PP(1,\ldots,a_n)$ there are $n+1$ hyperplanes with different numerical characteristics.

\Cref{thm: AH} provides a finite list of exceptions in $\PP^2$; $2$ points in degree $2$ and $5$ points in degree $4$. In \Cref{example: deficiency} we show that general double points, even in a ``nice'' space such as $\PP(1,b,c)$ can have erratic behavior. Nonetheless, \Cref{Theorem: 1bc bound} gives a linear bound, in terms of the weights, on the degree $d$, after which a set of points of any size expected dimension.
\begin{theoremExcep}
    Let $X \subseteq \PP(1,b,c)$ be set of $r$ general double points. Then $X$ has expected dimension in degree $d$ if $d\geq 10c$ or if $d\geq 6c$ and $\LF \frac{2c}{b} \RF \geq 5$.
\end{theoremExcep}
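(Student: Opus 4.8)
The plan is to translate the double-point interpolation statement into a statement about secant varieties via the adapted Terracini lemma, \Cref{equiv}, and then to reduce the problem to a single balanced value of $r$. By \Cref{equiv}, a set of $r$ general double points in $\PP(1,b,c)$ has expected dimension in degree $d$ exactly when the $r$-th secant variety of the weighted Veronese image $\nu_d(\PP(1,b,c))$ has the expected dimension $\min\{3r-1,\,H_d(S)-1\}$; Terracini's lemma identifies the affine cone over a general point of that secant variety with the span $T=\langle T_{p_1},\ldots,T_{p_r}\rangle$ of the affine tangent spaces at $r$ general points $p_1,\ldots,p_r$. Each $T_{p_i}$ is the $3$-dimensional tangent space at $\nu_d(p_i)$ (the value functional being recovered from the three weighted partials through the Euler relation $x_0\partial_0 f+bx_1\partial_1 f+cx_2\partial_2 f=df$), so the whole problem becomes showing that, for $d$ in the stated range, the $r$ tangent spaces are in general linear position, i.e. $\dim T=\min\{3r,H_d(S)\}$.

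First I would reduce to one critical index. Adjoining a further general double point only enlarges the vanishing conditions, so $r\mapsto H_d(S/I_{X_r})$ is nondecreasing and bounded above by both $3r$ and $H_d(S)$. Moreover, if the $3r^\ast$ functionals cut out by $r^\ast$ general double points are linearly independent, then so is any subset of them, giving independence for every $r<r^\ast$; dually, if $r^{\ast\ast}$ general double points already span $[S]_d^\vee$, then so do any more of them, giving the maximal value $H_d(S)$ for every $r>r^{\ast\ast}$. Consequently it suffices to establish the expected value at the balanced index $r_0=\lfloor H_d(S)/3\rfloor$ and at its successor $r_0+1$, where $3r_0$ straddles $H_d(S)$; all other $r$ follow by monotonicity and by restriction of independent functionals to subsets.

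For the balanced case I would make the span $T$ explicit in the monomial basis of $[S]_d$, namely the monomials $x_0^{i}x_1^{j}x_2^{k}$ with $i+bj+ck=d$. Dehomogenizing at $x_0$, each $T_{p_i}$ is the row space of a \emph{weighted double-point evaluation matrix} whose rows record these monomials and their $\partial_{x_1}$- and $\partial_{x_2}$-derivatives evaluated at $p_i$; since $\partial_{x_1}$ and $\partial_{x_2}$ lower degree by $b$ and $c$, the derivative directions are controlled by how many monomials survive in degrees $d-b$ and $d-c$. I would then specialize the $p_i$ to a convenient configuration — on or adjacent to the coordinate line $\{x_2=0\}\cong\PP(1,b)$ — so that the resulting $3r\times H_d(S)$ matrix acquires a block structure whose rank can be read off; equivalently one runs a \Cref{TERA}-style degeneration peeling the line $\{x_2=0\}$, whose restriction is governed by the mixed-multiplicity interpolation of \Cref{GK} and whose residual lives in degree $d-c$, feeding an induction. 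The thresholds $d\ge 10c$, and $d\ge 6c$ when $\lfloor 2c/b\rfloor\ge 5$, are precisely the hypotheses making every Hilbert function in this reduction large enough for the block ranks to be maximal and for the induction to reach its base case.

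The main obstacle is exactly this numerical bookkeeping in the non-standard grading. Unlike $\PP^2$, the three coordinate lines of $\PP(1,b,c)$ carry different weights, so the derivative directions produced by a double point are genuinely unbalanced, and one must track separately the monomial counts in degrees $d$, $d-1$, $d-b$ and $d-c$, rather than a single binomial coefficient. Guaranteeing that the specialized points simultaneously impose independent conditions along the line and leave a residual scheme of the expected dimension forces the degree to dominate the largest weight $c$ by a fixed multiplicative factor; the two alternative bounds correspond to the regimes $c/b$ small versus $c/b\ge 5/2$, the latter allowing the sharper constant $6$ because a wider weight gap already pushes the tangent directions into general position. Verifying these inequalities — not any single conceptual step — is where the real work lies.
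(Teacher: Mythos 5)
Your set-up (the secant-variety translation via \Cref{equiv}, the Euler relation, and the reduction to the balanced values $r_0=\lfloor s_d/3\rfloor$ and $r_0+1$, which is \Cref{rmk: specilization}) is fine, but the heart of the argument is missing, and the mechanism you propose to supply it would not work. Everything rests on your final claim that the thresholds $d\geq 10c$, or $d\geq 6c$ with $\lfloor 2c/b\rfloor\geq 5$, are ``precisely the hypotheses making every Hilbert function in this reduction large enough for the block ranks to be maximal and for the induction to reach its base case.'' That is a restatement of the theorem, not a proof of it: you never verify the numerical condition \Cref{eq: TERRACINI CONDITION} of \Cref{TERA} (which already fails for infinitely many $(r,d)$ in straight projective space), and you never exhibit a base case. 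Worse, the induction you sketch cannot have one in general: peeling the line $V(x_2)\cong\PP(1,b)$ sends the residual configuration into degrees $d-c$, $d-2c$, \dots, and after finitely many steps one lands below $10c$, where the statement is genuinely false --- \Cref{example: deficiency} exhibits, e.g., $4$ general double points in $\PP(1,4,57)$ that are deficient in every degree $32\leq d'\leq 68$. So condition (2) of \Cref{TERA}, which needs the residual set to have expected dimension in degree $d-c$, is unavailable as an induction hypothesis; this is exactly why the paper confines the TERA machinery to $\PP(1,2,3)$, where no exceptions exist. The explicit block-matrix alternative fares no better: the clean two-block structure used in the paper (proof of \Cref{suffkpt}) requires $rb<c<(r+1)b$, so that degree-$d$ monomials involve $v$ at most once; for $d\geq 10c$ and arbitrary $b,c$ no such structure is present.

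The paper's actual proof is non-inductive and runs by contradiction. If interpolation fails in degree $d$, the second Terracini lemma, \Cref{Terracini 2}, produces a positive-dimensional variety $C$ through the points along which every nonzero $F\in (I_X)_d$ is singular, so $F=C^2H$ with $l=\deg C\leq\lfloor d/2\rfloor$. Since $C$ vanishes at the $r$ general reduced points, \Cref{prop: simple point interpolation} forces $r\leq s_l-1\leq s_{\lfloor d/2\rfloor}-1$, while \Cref{rmk: specilization} allows one to take $r\geq\lfloor s_d/3\rfloor$. The degree hypotheses enter only through the purely combinatorial inequality $\lfloor s_d/3\rfloor\geq s_{\lfloor d/2\rfloor}$, proved by a lattice-point count in \Cref{lemma: 9c+1}, and this yields the contradiction; the two bounds $10c$ and $6c$ come from that counting argument, not from any degeneration. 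If you want to repair your write-up, replace your third paragraph with this contradiction argument and then prove the counting inequality --- that is where the real work lies.
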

The case of the projective plane  of \Cref{thm: AH} had been previously elucidated by Terracini in \cite{Terra2} and by Platini in \cite{Platini1}. 
Our proof of \Cref{Theorem: 1bc bound} mimics Terracini's approach. 
However the bound in our result depends on the weights. This is a case where the numerics of \wps work to our disadvantage: as the weights increase, one has more cases to check.

The essential ingredient in proving \Cref{Theorem: 1bc bound} is the equivalence of double point interpolation and computing the dimension of the higher secant variety to the Veronese embedding of projective space. 
This is made precise in \Cref{equiv}, the proof of which is relegated to \Cref{App: Secant Variety}. The main requisite in proving this equivalence is due to Lasker  \Cref{LASKERR}. 

We end the introduction highlighting three major themes of this paper: the divergence of notions from the usual projective space when they are considered in the weighted projective space, the difference in numerology and the importance of a weight of $1$. For example, in \Cref{def: Veronese Map}, one observes when $a_i\neq 1$ for all $i$, the $d$-th Veronese map is not defined on all of $\PP(a_0,\ldots,a_n)$, hence it is not an embedding; a fact crucial to the analysis of its secant variety. 
Other examples of this divergence are the defining ideal of a simple point not being complete intersection, addressed earlier, or stabilization of the Hilbert function at the expected number of independent conditions; see \Cref{remark: Weight of 1}~(2). In all cases, introducing a single weight of $1$ controls this divergence to some extent. All specialization techniques used for the interpolation problem rely heavily on numerical conditions \cite{AH95, Chandler01, BO08, AHElisaPostinghel}. While for a standard graded polynomial ring the Hilbert function can be computed using a binomial coefficient, no closed formula exists in the non-standard graded setting beyond $2$ variables. Therefore applying degeneration techniques becomes very tricky. The remarkable fact that the Hilbert function for the coordinate ring of $\PP(1,2,3)$ happens to have a closed formula, given in \Cref{FORMULA123},  allowed us to follow Terracini's method in this setting.


{\bf Acknowledgments:} I would like to thank my advisor, Dr.~Alexandra Seceleanu, for her support and infinite patience in providing detailed commentary on earlier versions of this paper. Her help was especially instrumental in the final version of \Cref{App: Secant Variety} and the proof of \Cref{lem: Chandler}. I thank Benjamin Briggs for spending numerous hours helping me with the gory details of abstract tangent spaces, his help was crucial in my initial proof of \Cref{equiv}. I thank Ola Sobieska for introducing me to \cite{CCD} and Jack Jeffries for his help fine tuning my lattice point counting skills. I thank Mahrud Sayrafi for his help with some of my Macaulay2 code. I have benefited enormously from my conversations with Jack Huizenga, Nawaj KC, Claudiu Raicu, Andrew Soto Levins, and Ryan Watson. Finally, I thank Paolo Mantero and Tài Huy Hà for writing \cite{HM21} and Giorgio Ottaviani and Maria Brambilla for writing \cite{BO08}, their papers are the main inspirations for this work.
\section{Weighted projective space and the interpolation problem} \label{section: wps}

\subsection{Weighted projective varieties}\label{s: wps}

Fix integers $a_0,\ldots, a_n\in \N$. 
\begin{definition} \label{def: wps}
The {\em weighted projective space} $\PP=\PP(a_0,\ldots, a_n)$ is a quotient of the affine space $\mathbb{A}_k^{n+1}$ under the equivalence relation
\begin{equation}\label{eq: equivalence relation} 
(p_0,\ldots ,p_n) \sim (\lambda^{a_0 }p_0,\ldots, \lambda^{a_n} p_n) \text{ for } \lambda \in k\setminus \{0\}.
\end{equation}
Denoting $k^\times=k\setminus\{0\}$ the multiplicative group of $k$, and viewing the above  as an action of $k^\times$ on $\mathbb{A}_k^{n+1}$ allows to write concisely
\begin{equation*}
\PP(a_0,\ldots ,a_n)=(\mathbb{A}_k^{n+1} \setminus \{0\})/k^\times.
\end{equation*}
\end{definition}
The usual (or straight) projective space $\PP_k^n$ can be recovered from this construction as $\PP(1,1,\ldots, 1)$. We reserve the notation $\PP_k^n$ for straight projective space and $\PP$ or $\PP(a_0,\ldots, a_n)$ for weighted projective space.

Let $S=k[x_0,\ldots, x_n]$ be a polynomial ring with (nonstandard) grading given by $\deg(x_i)=a_i$. Observe that endowing  $S$ with this grading is equivalent to giving an action on $\mathbb{A}_k^{n+1}$  as in \Cref{eq: equivalence relation}. Specifically, a polynomial $f\in S$ is  homogeneous of degree $d$ if and only if
\begin{equation}
\label{eq: action on polynomials}
f(\lambda^{a_0 }x_0,\ldots, \lambda^{a_n} x_n) = \lambda^d f(x_0,\ldots ,x_n) \text{ for all } \lambda\in k^\times.
\end{equation}
Therefore the condition $f(p) = 0$ is well defined on equivalence classes of $\sim$ in \Cref{eq: equivalence relation}.  One can thus alternatively define weighted projective space as the projective variety $\PP=\Proj(S)$. More generally, if $I$ is a homogeneous ideal of $S$, then the projective variety $\Proj (S/I)$ is the quotient  ${(V(I)\setminus\{0\})/k^\times}$. Throughout the paper $S$ always represents the (non-standard) graded homogenous polynomial ring of $\PP(a_0,\ldots,a_n)$.

Much like the usual projective space, weighted projective space is covered by the affine charts $U_i=\{p\in \PP\mid p_i\neq 0\}$. The coordinate ring of each $U_i$ is the ring of invariant polynomials $k[x_0, \ldots, x_{i-1},x_{i+1},\ldots, x_n]^{\mu_{a_i}}$, where the group $\mu_{a_i}$ is identified with the subgroup of $k^\times$ of $a_i$-th roots of 1 and the group acts  as follows:  an element  $\xi\in k$ with $\xi^{a_i}=1$ maps $x_j\mapsto \xi^{-a_j}x_j$. By \Cref{eq: action on polynomials} a polynomial is invariant under this action if and only if its degree is divisible by $a_i$ leading to a ring isomomorphism 
\begin{equation}\label{eq: affine charts}
k[x_0, \ldots, x_{i-1},x_{i+1},\ldots, x_n]^{\mu_{a_i}} \cong S[x_i^{-1}]_0, f\mapsto \frac{f}{x_i^{\deg(f)/a_i}}.
\end{equation}
Thus $U_i$ is $\Spec(k[x_0, \ldots, x_{i-1},x_{i+1},\ldots, x_n])/\mu_{a_i}=\mathbb{A}^n/\mu_{a_i}$.

\begin{definition}
The $d$-th {\em Veronese subring} of $S$ is the graded ring 
\begin{equation}\label{def: Veronese Subring}
S^{[d]}=\bigoplus_{i\geq 0} S_{id}.
\end{equation}
\end{definition}
Although $S^{[d]}$ is a proper subring of $S$, $\Proj(S^{[d]})$ and $\Proj(S)$ are isomorphic projective varieties, because any homogeneous ratio in $S$ can be expressed as a homogeneous ratio in $S^{[d]}$. There are two different conventions in use about degrees in $S^{[d]}$: one can let the elements of the Veronese subring have the same degree they had in $S$, or one can divide degrees through by $d$. The advantage of the latter approach is that there exists a Veronese subring  $S^{[D]}$ which is generated by its elements of the smallest degree (\cite[Lemma 2.1.6]{EGA2}); in this case, after dividing degrees by $D$, one can view  $S^{[D]}$ as being standard graded. This gives rise to an embedding $\PP\subset \PP^N_k$, where $N=\dim_k (S_D)-1$; see \Cref{rem: veronese embedding}.

Another application of this ideal is a reduction to sets of weights  that satisfy certain a simple condition which we now describe.
\begin{definition}\label{defn: well formed}
A weighted projective space $\PP(a_0, \ldots , a_n)$ is {\em well formed} if no $n-1$ of $a_0,a_1,\ldots,a_n$ have a common factor greater than $1$.
\end{definition}
\begin{remark} \label{rmk: well-formed} 
Any  \wps is isomorphic to a well formed weighted projective space. Two weighted projective spaces with the same weights are isomorphic up to a reordering of the weights. Henceforth the space $\PP(a_0,\ldots, a_n)$ is assumed to be well-formed with weights in {\it increasing} order $a_0\leq \cdots\leq a_n$ and $a_i \neq 1$ for some $0\leq i \leq n$.
\end{remark}

\subsection{Points in weighted projective space}

In this section we describe the ideal defining one point in the \wps and its properties.

\begin{definition}
Let $p\in\PP$ be a point. The defining ideal of $p$ is the ideal
\[
I_p=(f\in S\mid f \text{ homogeneous}, f(p)=0).
\]
\end{definition}

\begin{lemma}\label{lem: monomial curve}
Fix a point $p\in\PP$. Let $T=k[t]$ and consider the graded $k$-algebra homomorphism 
\[
\varphi_p:S\to T, x_i\mapsto p_it^{a_i}.
\]
Then $I_p=\ker (\varphi_p)$ and $S/I_p\cong k[t^{a_i} \mid p_i\neq 0]$. Moreover, if $p,q \in\PP$ are points such that for all $0\leq i\leq n$, $p_i=0$ if and only if $q_i=0$, then $I_p$ and $I_q$ are isomorphic $S$-modules.
\end{lemma}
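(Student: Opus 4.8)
The plan is to analyze $\varphi_p$ on homogeneous elements, read off both the kernel and the image from that analysis, and then obtain the module statement from a rescaling automorphism of $S$. First I would record the basic computation: if $f\in S$ is homogeneous of degree $d$, then every monomial $x^\alpha=x_0^{\alpha_0}\cdots x_n^{\alpha_n}$ occurring in $f$ satisfies $\sum_i a_i\alpha_i=d$, so $\varphi_p(x^\alpha)=\prod_i (p_it^{a_i})^{\alpha_i}=p^\alpha\,t^{d}$ and therefore $\varphi_p(f)=f(p)\,t^{d}$. This is exactly \Cref{eq: action on polynomials} viewed through $\varphi_p$, and it is the one identity that drives the whole proof.

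From this, the first two assertions follow quickly. Since $\deg x_i=a_i$ maps to $\deg(p_it^{a_i})=a_i$, the map $\varphi_p$ is graded, so $\ker\varphi_p$ is a homogeneous ideal; a homogeneous $f$ lies in it iff $f(p)\,t^{\deg f}=0$ iff $f(p)=0$ iff $f\in I_p$. As two homogeneous ideals that agree in every degree coincide, we get $I_p=\ker\varphi_p$. The first isomorphism theorem then gives $S/I_p\cong\operatorname{im}\varphi_p$, and $\operatorname{im}\varphi_p$ is the $k$-subalgebra of $T$ generated by the elements $p_it^{a_i}$. Discarding the indices with $p_i=0$ and absorbing the nonzero scalars $p_i$ (units of $k$), this subalgebra is precisely $k[t^{a_i}\mid p_i\neq 0]$, which yields the claimed description of $S/I_p$.

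For the final statement I would introduce the diagonal graded automorphism $\psi:S\to S$ defined by $x_i\mapsto (p_i/q_i)x_i$ on those indices with $q_i\neq 0$, and $x_i\mapsto x_i$ otherwise; this is well defined and invertible precisely because $p_i=0$ if and only if $q_i=0$. A substitution argument shows $\psi(f)(q)=f(p)$ for homogeneous $f$, since $(\,(p_i/q_i)q_i\,)_i=(p_i)_i$, whence $f\in I_p$ iff $\psi(f)\in I_q$ and thus $\psi(I_p)=I_q$. The main obstacle, and the step I expect to require the most care, is promoting this to the asserted isomorphism of $S$-modules: the bijection $\psi|_{I_p}\colon I_p\to I_q$ is not $S$-linear but only semilinear over $\psi$, because $\psi(x_if)=(p_i/q_i)\,x_i\,\psi(f)$. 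I would therefore phrase the conclusion through the automorphism itself, exhibiting an isomorphism of pairs $(S,I_p)\cong(S,I_q)$ and recording that $I_q$ is the twist of $I_p$ along the graded automorphism $\psi$; this is exactly what is needed downstream, since a graded automorphism of $S$ preserves Hilbert functions and carries the module structure of $I_p$ onto that of $I_q$. Making this identification precise is the delicate point, while the kernel, image, and automorphism constructions are routine given the degree identity above.
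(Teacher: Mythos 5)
Your proof follows the same two steps as the paper's: the graded computation $\varphi_p(f)=f(p)\,t^{\deg f}$ to identify $\ker\varphi_p$ with $I_p$ (you additionally spell out the image computation giving $S/I_p\cong k[t^{a_i}\mid p_i\neq 0]$, which the paper leaves implicit), and the diagonal rescaling automorphism, which is exactly the paper's $\Theta(x_i)=\frac{p_i}{q_i}x_i$. The one place you diverge is the final step, and your caution there is warranted --- in fact more than you claim. The restriction $\Theta|_{I_p}\colon I_p\to I_q$ is only $\Theta$-semilinear, and no honest $S$-linear isomorphism can exist when $p\neq q$ and the ideals have height at least $2$: since $S$ is a normal domain, any $S$-linear map $I_p\to S$ is multiplication by an element of $(S:I_p)=S$, so an $S$-module isomorphism $I_p\cong I_q$ would force $I_q=\lambda I_p$ for a unit $\lambda$, hence $I_p=I_q$. (Concretely, $(x,y)\not\cong(x,z)$ as $k[x,y,z]$-modules, although an automorphism carries one to the other.) So the paper's literal conclusion ``isomorphic $S$-modules,'' drawn directly from $\Theta(I_p)=I_q$, is an overstatement for $n\geq 2$, and your formulation --- an isomorphism of pairs $(S,I_p)\cong(S,I_q)$, with $I_q$ the twist of $I_p$ along the graded automorphism --- is the correct one. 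It also suffices for every later use of the lemma (e.g., in \Cref{ptsplne} and \Cref{prop: simple point interpolation}), since a degree-preserving semilinear bijection identifies the graded pieces and hence the Hilbert functions of $I_p$ and $I_q$.
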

\begin{proof}
Since $\varphi_p$ is a degree-preserving homomorphism, $\ker (\varphi_p)$ is a homogeneous ideal. If $f\in S$ is homogeneous of degree $d$ then $f\in \ker (\varphi_p)$ if and only if $f(p_it^{a_i})=0$ if and only if $t^df(p)=0$ if and only if $ f\in I_p$.

The map $\Theta:S\to S, \Theta(x_i)=\frac{p_i}{q_i}x_i$ (where we make the convention $\frac{0}{0} =1$) is an automorphism on $S$ if and only if  for all $0\leq i\leq n$, $p_i=0\iff q_i=0$. It satisfies $\Theta(I_p)=I_q$ thus demonstrating an isomorphism $I_p\cong I_q$. 
\end{proof}

In the case $p={\bf 1}=[1:1:\cdots:1]$ the ideal $I({\bf 1})=\ker (\varphi_{\bf 1})$ is called a {\em monomial curve ideal}. This class of ideals has been studied in \cite{Herzog}, \cite{Bre1} and \cite{KunzMonomial}. An expository survey of affine monomial curves and their computational aspects can be found in \cite{affinemonomialcurves}. The particular case of affine space curves is best understood. It was studied in \cite{Herzog}.

\bpr[{\cite{Herzog, Denham}}]\label{thm: Herzog}
Consider  positive integers $(a,b,c)$ with $\gcd(a,b,c)=1$. Let $r_i$ for $i=1,2,3$ be the smallest positive integer such that the following equations admit a solution in non-negative integers.
\begin{equation}\label{eq: Herzog}
    \begin{split}
        r_1a&=k_1b+g_1c\\
       r_2b&=k_2a+g_2c\\
      r_3c&=k_3a+g_3b
  \end{split}
\end{equation}
Then the kernel of the $k$-algebra map 
\[
\varphi:k[z,u,v]\to k[t], z\mapsto t^a, u\mapsto t^b, v\mapsto t^c
\]
 is the ideal
\[
I= (z^{r_1}-u^{k_1}v^{g_1}, u^{r_2}-z^{k_2}v^{g_2}, v^{r_2}-z^{k_3}u^{g_3}).
\]
Moreover two of the binomials above agree up to sign if and only if $0\in\{k_1, k_2, k_3, g_1, g_2, g_3\}$.
\epr

If in \Cref{eq: Herzog} $k_i=0$ or $g_i=0$ for any $i$, then we say the weights $(a,b,c)$ satisfy the \textit{Herzog Criteria} (HC).

\bpr \label{ptsplne} Consider a well formed weighted projective plane $\mathbb{P}(a,b,c)$ with coordinate ring $S=k[z,u,v]$. Then the following hold:
\begin{enumerate}
\item If $p_i\neq 0$ for $1\leq i\leq 3$ then
\begin{eqnarray*}
p=[0:p_1:p_2] & \Rightarrow & I_p=(z,p_2^bu^c-p_1^cv^b)\\
p'=[p_0:0:p_2] & \Rightarrow & I_{p'}=(u,p_2^az^c-p_0^cv^a)\\
p''=[p_0:p_1:0] & \Rightarrow & I_{p''}=(v,p_1^az^b-p_0^bu^a).
\end{eqnarray*}
\item Let $p=[p_0:p_1:p_2]$ with $p_i \neq 0$ for all $i$. Then 
\[
I_p=(p_1^{k_1}p_2^{g_1}z^{r_1}-p_0^{r_1}u^{k_1}v^{g_1}, p_0^{k_2}p_2^{g_2}u^{r_2}-p_1^{r_2}z^{k_2}v^{g_2},p_0^{k_3}p_1^{g_3}v^{r_3}-p_2^{r_3}z^{k_3}u^{g_3})
\]
where $r_i$, $k_i$ and $g_i$ are described in \Cref{eq: Herzog}
\end{enumerate}
\epr 

\bpf 
Let $p=[0:p_1:p_2]$. The polynomial $p_2^bu^c-p_1^cv^b$ is irreducible because the only homogeneous polynomials in $k[u,v]$ of degree less than $bc$ are monomials. Observe that $J=(z,p_2^bu^c-p_1^cv^b) \subseteq I_p$, and $J$ is a prime ideal of height 2. 
Since $I_p$ is also a prime ideal of height two, it follows that $J=I_p$. Other cases of $(1)$ are proven similarly.

Statement (2) follows from \Cref{lem: monomial curve} and \Cref{thm: Herzog}. Specifically, the proof of \Cref{lem: monomial curve} yields that $I_p=\Theta(I_{\bf 1})$ where $\Theta(x_i)=x_i/p_i$ and $I_{\bf 1}$ is described as $I$ in \Cref{thm: Herzog}.
\epf  

\begin{corollary} \label{POINTS ARE CI}
All ideals defining points in the weighted projective plane $\PP(a,b,c)$ are complete intersections if and only if $a,b,c$ satisfy (HC).
\end{corollary}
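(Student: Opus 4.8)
The plan is to reduce the statement, which ranges over all points of $\PP(a,b,c)$, to the single point ${\bf 1}=[1:1:1]$, and then to read off the minimal number of generators of its defining ideal directly from \Cref{thm: Herzog}. First I would stratify the points by their vanishing pattern. By \Cref{lem: monomial curve}, two points $p,q$ with the property that $p_i=0\iff q_i=0$ have isomorphic defining ideals, so it suffices to treat one representative of each pattern. Points with at least one vanishing coordinate cause no trouble: \Cref{ptsplne}(1) exhibits $I_p$ as generated by a single variable together with one binomial, and the three coordinate points $[1:0:0],[0:1:0],[0:0:1]$ have ideals $(u,v),(z,v),(z,u)$. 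In every such case $I_p$ is generated by two elements and has height $2$, hence is a complete intersection, regardless of whether (HC) holds. Consequently the entire content of the corollary is concentrated in the remaining pattern, a point with all coordinates nonzero, whose ideal is isomorphic to $I_{\bf 1}$.

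Next I would invoke the standard criterion that a prime ideal $P$ of height $h$ in the regular ring $S=k[z,u,v]$ is a complete intersection exactly when its minimal number of generators $\mu(P)$ equals $h$; since $I_{\bf 1}$ is prime of height $2$, being a complete intersection is equivalent to $\mu(I_{\bf 1})=2$. Now \Cref{thm: Herzog} presents $I_{\bf 1}$ with the three binomial generators $z^{r_1}-u^{k_1}v^{g_1}$, $u^{r_2}-z^{k_2}v^{g_2}$, $v^{r_3}-z^{k_3}u^{g_3}$, and records that two of these coincide up to sign exactly when $0\in\{k_1,k_2,k_3,g_1,g_2,g_3\}$, i.e.\ exactly when (HC) holds. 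In that case one of the three listed generators is redundant, so $\mu(I_{\bf 1})\le 2$; since $\codim I_{\bf 1}=2$ forces $\mu(I_{\bf 1})\ge 2$, we obtain $\mu(I_{\bf 1})=2$ and $I_{\bf 1}$ is a complete intersection. Combined with the previous paragraph this settles the ``if'' direction.

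For the ``only if'' direction I would argue the contrapositive: when (HC) fails the three binomials are pairwise distinct, and I claim they form a minimal generating set, so that $\mu(I_{\bf 1})=3>2$ and $I_{\bf 1}$ is not a complete intersection; hence not every point of $\PP(a,b,c)$ has complete intersection ideal. The main obstacle is precisely this minimality claim, since \Cref{thm: Herzog} as quoted only guarantees the three binomials are distinct, not that none of them lies in the ideal generated by the other two. I expect to close this gap by appealing to the full structure underlying \Cref{thm: Herzog} (Herzog's classification of monomial space curves), in which a non--complete-intersection monomial curve is the determinantal ideal of maximal minors of a $2\times 3$ matrix, whose Hilbert--Burch resolution is minimally three-generated. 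Alternatively one can verify minimality directly via Nakayama by computing $\dim_k I_{\bf 1}/\fm I_{\bf 1}$ with $\fm=(z,u,v)$, checking through the generator degrees $r_1a,\,r_2b,\,r_3c$ that no binomial is expressible modulo $\fm I_{\bf 1}$ in terms of the other two when $0\notin\{k_i,g_i\}$.
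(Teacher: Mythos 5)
Your proposal is correct and follows essentially the same route as the paper, which states \Cref{POINTS ARE CI} without an explicit proof as an immediate consequence of \Cref{ptsplne}: points with a vanishing coordinate are always two-generated height-two primes (hence complete intersections), and the all-nonzero case reduces via \Cref{lem: monomial curve} to $I_{\bf 1}$, where (HC) decides whether two of Herzog's three binomials coincide. Your additional observation---that mere distinctness of the three binomials does not by itself yield $\mu(I_{\bf 1})=3$, and that one must invoke the full strength of Herzog's result (the determinantal/Hilbert--Burch structure of non--complete-intersection monomial space curves, or a direct Nakayama check)---correctly identifies and fills the one step the paper leaves implicit in its citation of \cite{Herzog, Denham}, and is exactly what makes the ``only if'' direction rigorous.
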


The next proposition shows that ideals defining points in certain higher dimensional weighted projective spaces are also complete intersections.

\bpr \label{defidplane} Consider $\PP=\mathbb{P}(1,1
,\ldots,1,a_{i+1},\ldots, a_n)$ with $a_j \neq 1$ for all $i+1\leq j \leq n$ and let  $S=k[x_0,\ldots,x_n]$ be the coordinate ring of $\PP$.  Let $p=[p_0: \ldots:p_n]$ where $p_t \neq 0$ for some $0\leq t \leq i$. Then 
\[
I_p=(p_0x_t^{a_0}-p_t^{a_0}x_0,\ldots,p_nx_t^{a_n}-p_t^{a_n}x_n).
\] 
\epr 
\bpf Note that $J=(p_1x_t^{a_1}-p_t^{a_1}x_1,\ldots,p_nx_t^{a_n}-p_t^{a_n}x_n) \subseteq I_p$ and $S/J \cong k[x_t]$ implies $J$ is a prime ideal. Further $\text{height}(J)=n$ because $J$ is a complete intersection with $n$ generators. Since $I_p$ is also a prime ideal of height $n$ the equality $J=I_p$ follows.
\epf
We end this section by noting that finding explicit generators for the defining ideal of a point in $\PP(a_0,\ldots,a_n)$ for $n\geq 3$ is much harder. In fact, outside of the case of \Cref{defidplane} we do not know what the minimal generators of ideals of points are.

\begin{problem}
    Find explicit generators for the defining ideal of a general point in
    $\PP(a_0,\ldots,a_n)$ for $n\geq 3$.
\end{problem}


\subsection{Hilbert function} 

Throughout $\N$ denotes the natural numbers (not including zero), $\N_0=\N\cup\{0\}$, $k$ is a field  and $S=k[x_0,\ldots, x_n]$ is a polynomial ring in variables $x_0,\ldots, x_n$ with coefficients in $k$.
We endow $S$ with a grading given by $\deg(x_i)=a_i\in\N$.

A graded $S$-module is a module that admits a decomposition $M=\bigoplus_{i\in\Z}M_i$, where $M_i$ denotes the $k$ vector space of homogeneous elements of $M$ of degree $i$. The {\em Hilbert function} of $M$ is the function $H_{-}(M):\N_0\to \N_0$ given by $H_i(M)=\dim_k M_i$.   

\begin{notation}\label{not: s_d}
Throughout the paper we denote the Hilbert function of the polynomial ring $S$ by $s_i=\dim_k S_i$.
\end{notation}
The integer $s_d$ is the number of monomials of degree $d$ in the polynomial ring $S$. Computing this number is equivalent to finding the number of non-negative integer solutions to the Diophantine equation
\begin{equation*}
    \sum_{i=0}^n a_it_i=d, \text{ where }  t_i\in \N_0 \text{ for all } 0\leq i\leq n.
\end{equation*}

The case of two variables has explicit formulas which are used repeatedly.

\begin{example} \label{prop: number of monomials in two variabels}
    Let $S=k[z,u]$ with $\deg(z)=a$ and $\deg(y)=b$ where $\gcd(a,b)=1$. Let $p$ and $q$ be positive integers such that $aq-bp=1$. By \cite[Corollary 1.6]{CCD} we have
    \begin{equation}\label{eq: sd for ab}
        s_d = \begin{cases}
        \LF \frac{qn}{b} \RF - \LF \frac{pn}{a} \RF   &\text{ if } a \nmid d\\
         \LF \frac{qn}{b} \RF - \frac{pn}{a} +1 &\text{ if } a \mid d.
    \end{cases}
    \end{equation}
In particular for $a=1$  this yields
    \begin{equation}\label{eq: sd for 1b}
         s_d=\LF \frac{d}{b} \RF +1.
    \end{equation} 
\end{example}

\begin{remark}\label{rmk: SES-NZD}
    For a  graded $S$-module $M$, where $x_i$ is a non-zero divisor of degree $a_i$ on $M$ we have the equation
    \begin{equation} \label{equation: HF-NZD}
        H_d(M)=H_{d-a_i}(M)+H_d(M/(x_i\cdot M)).
    \end{equation}
\Cref{equation: HF-NZD} is extensively used in the case of ${M=S/I_X}$ where $I_X$ defines a set of points contained in $U_i$.     
\end{remark}

\subsection{The interpolation problem}
Based on our assumptions on the field, we can define the symbolic powers of a point as follows.
\begin{definition} \label{def: symb Power Def}   
Let $p\in\PP$ be a point. The ideal defining the multiple point $mp$ (also termed a fat point ideal or a symbolic power ideal) is
\begin{equation*}
    I_p^{(m)}=\left\{f\in S \mid \frac{\partial^{m-1} f}{\partial x_0^{u_0}\cdots\partial x_n^{u_n}}(p)=0 \text{ where } \sum u_t=m-1 \right\}.
\end{equation*}
Note when the defining ideal of a simple point is a complete intersection the equality $I_p^{(m)}=I_p^m$ holds for all $m \in \N$.
Moreover, for a set $X=\{m_ip_i \}_{i=1}^r$ of multiple points the defining ideal is 
\[I_X=\bigcap_{i=1}^r I_{p_i}^{(m_i)}.
\]
\end{definition}

The ideal above contains the equations of hypersurfaces vanishing to order at least $m_i$ at each point $p_i$. This interpretation leads to the following.

\begin{problem}(Interpolation Problem)
Find the dimension of the vector space of hypersurfaces of a  degree $d$ vanishing to  order at least $m_i$ at each points $p_i$ in a given finite set.
Equivalently, find the Hilbert function the coordinate ring of a set $X=\{m_ip_i \}_{i=1}^r$ of multiple points.
\end{problem}

Using \Cref{def: symb Power Def} for a set $X=\{m_ip_i \}_{i=1}^r$ of points with mixed multiplicity we notice the quantity $\sum_{i=1}^r \binom{n+m_i-1}{n}$ is the  number of conditions imposed on degree $d$ hypersurfaces by the vanishing of all partial derivatives of appropriate order. It is natural to inquire if these conditions are linearly independent. Clearly, the vanishing conditions can not be linearly independent if their number exceeds $s_d$, the dimension of the space of all hypersurfaces of degree $d$. This brings about the next definition which signifies {\it the expected number of linearly independent conditions imposed by $X$.}
\begin{definition}[$\AH_n(d)$ property]\label{def: AH}
Let $X=\{m_ip_i \}_{i=1}^r \subseteq \PP(a_0,\ldots,a_n)$ be a set of $r$ points with defining ideal $I_X\subseteq S$. For $d\in \N$, we say $X$  has {\em property $\AH_n(d)$} or $X$ {\em is $\AH_n(d)$}  if
$$ H_d(R/I_X)=\min \left \{ s_d, \sum_{i=1}^r \binom{n+m_i-1}{n}\right \}. $$
\end{definition}

\Cref{prop: equiv evaluation map} states a well-known equivalent formulations of interpolation in terms of the evaluation map which is now introduced.

\bdf [Evaluation Map] \label{def: EvaluationmMap}
Let $X=\{ m_i p_i \}_{i=1}^r \subseteq \PP(a_0,\ldots,a_n)$ be a set of $r$ double points with $m=\sum_{i=1}^r \binom{n+m_i-1}{n}$. For any $d \in \N$, the {\em evaluation map on $X$ in degree $d$} is the map
\begin{eqnarray*}
    \ev_X : &(S)_d& \to k^{m}\\ 
    &f& \to \left ( \frac{\partial^{m_i-1} f}{\partial x_0^{u_0}\cdots\partial x_n^{u_n}}(p_i)=0 \mid \sum u_t=m_i-1  \right )_{i=1}^r.    
\end{eqnarray*}
\edf

\bpr \label{prop: equiv evaluation map}
For $X \subseteq \PP(a_0,a_1,\ldots,a_n)$ as defined in \Cref{def: EvaluationmMap} and any $d\in \N_0$ the following are equivalent:
\begin{enumerate}
    \item $X$ is $\AH_n(d)$,
    \item $\ev_X:(S)_d \to k^{m}$ has full rank.
\end{enumerate}
\epr 
\begin{remark} \label{rmk: minimum}
    Since $I_X$ is the kernel of the evaluation map and  $S/I_X$ is its image,  it is clear that
    \begin{equation*}
        H_d(S/I_X) \leq \min\{s_d, m \}.
    \end{equation*}
   Therefore if $H_d(S/I_X)$ is equal to $s_d$ or $m$ then that value is automatically the minimum of the two.
\end{remark}



\section{Mixed point interpolation in the weighted projective line}\label{s:line}
This subsection focuses on interpolation in $\PP(a,b)$, where $\gcd(a,b)=1$ and coordinate ring $S=k[z,u]$ where $\deg(z)=a$ and $\deg(u)=b$.

\bpr\label{defid}
Let $I$ be the defining ideal of a point $p=[p_0:p_1]$. Then
\begin{equation*}
I_p=\begin{cases}
(p_1^az^b-p_0^bu^a) & \text{if } p_0,p_1 \neq 0\\
(z) & \text{if }  p_0=0 \text{ and } p_1 \neq 0\\
(u)& \text{if } p_0 \neq 0 \text{ and }p_1 =0.
\end{cases}
\end{equation*}
\epr 

\bpf 
Note $I_p$ is an unmixed height $1$ prime ideal, which implies $I_p$ is principal. Therefore it suffices to find an element of smallest degree in $I_p$.  If $p_0=0$ or $p_1=0$ this  element is evidently $z$ and $u$, respectively. Assuming $p_0,p_1 \neq 0$ implies no pure power of $z$ or $u$ belongs to $I_X$. Because $\gcd(a,b)=1$ and $ (p_1^az^b-p_0^bu^a) \subseteq I_X$, the least degree homogenous element in $I_X$ is $p_1^az^b-p_0^bu^a$.
\epf

\bpr \label{GK}
A set of general points with mixed multiplicity, $X=\{r_ip_i \}_{i=1}^n\subseteq \PP(a,b)$, is $\AH_1(d)$ for all $d\in \N$, that is
\begin{equation} \label{eq: AH_1(d)}
     H_d(S/I_X)=\min \{ s_d, \sum_{i=1}^n r_i\}. 
\end{equation}

\epr 
\bpf
It suffices to demonstrate one such set. (See \Cref{rmk: specilization}.) Let $X=\{r_1p_1,\ldots, r_np_n\}$ where $p_i=[p_i^{(0)}:p_i^{(1)}]$ and $[0:p_1] \not \in X$.
 Recall that $s_d =H_d(k[z,u])$ is the number of solutions to 
\begin{equation} \label{eq: proj line}
     ax+by=d \text{ where } x,y\in \N_0.
\end{equation}
We wish to interpret $H_d(S/I_X)$ in terms of \Cref{eq: proj line}.
For any $i$, \Cref{defid} implies $I_{p_i}=((p_i^{(0)})^bz^b-((p_i^{(1)})^au^a)$ is a principal prime ideal and thus
\begin{eqnarray*}
I_X &=& \bigcap_{i=1}^n I_{p_i}^{(r_i)}=\bigcap_{i=1}^n \left( (p_i^{(0)})^bz^b-((p_i^{(1)})^au^a)^{(r_i)} \right)\\
&\underset{=}{*}& \left(\prod_{i=1}^n ((p_i^{(0)})^bz^b-((p_i^{(1)})^au^a)^{r_i} \right).
\end{eqnarray*}
Identity * follows since each ideal is a complete intersection. 
Replacing $I_X$ by its initial ideal $(u^{ar})$, where $r=\sum_{i=1}^n r_i$, we notice that 
\[
S/I_X =\text{span}_k \{z^x u^y\; | \; 0\leq y \leq ar-1, \; 0 \leq x \} \underset{k-\text{vector space}}{\cong} k[z] \otimes_k k[u]/(u^{ar}).
\]
Hence $H_d(S/I_X)$ is the number of solutions to \Cref{eq: proj line} \textit{ with the restriction} that $0\leq y \leq ar-1$.
We will show
\begin{equation} \label{eq: line computation}
   H_d(S/I_X)=\begin{cases}
        s_d &\text{ if } d<b(ar-1)\\
        r   &\text{ if } d\geq b(ar-1).
    \end{cases}
\end{equation}
Suppose $d < b(ar-1)$. Then $ 0\leq y \leq ar-2$ regardless of whether we are computing $s_d$ or $H_d(S/I_X)$, which implies that $H_d(S/I_X)=s_d$ holds in this case.
Let $d \geq b(ar-1)$. Then $y=ar-i$ for $1 \leq i \leq ar$. It suffices to count how many values of $i$ yield a solution. From \Cref{eq: proj line} we get
\[
ax+b(ar-i)=d \Rightarrow bi+d \equiv 0 \pmod a \Rightarrow (-b)^{-1}d\equiv i \pmod a
\]
Hence $[i]_{a}=[(-b)^{-1}d]_a$ and $1\leq i \leq ar$. Therefore $H_d(S/I_X)$ is equal to the number of distinct coset representatives of $[(-b)^{-1}(d)]_a$ in the interval $[1,ar]$; of which there are $r$. Hence $H_d(S/I_X)=r$ for $d \geq b(ar-1)$.
Combining \Cref{rmk: minimum} and \Cref{eq: line computation} implies \Cref{eq: AH_1(d)}
\epf

\section{Simple point interpolation in the \wps} \label{section: interpolation in \wps}
The objective of this section is to establish simple point interpolation in $\PP(a_0,\ldots,a_n)$. \Cref{prop: simple point interpolation} is implicitly used in subsequent proofs of \cref{2POINTS}, \Cref{AH123} and \Cref{Theorem: 1bc bound}. Our proof is inspired by the one in \cite{JackHuizenga}. 

\bthm \label{prop: simple point interpolation}
Let $\PP=\PP(a_0,a_1,\ldots,a_n)$ and $X=\{p_i \}_{i=1}^r$ be a set of general simple points. $X$ is $\AH_n(d)$ for all $d$, that is
\begin{equation} \label{eq: simple pt interpolation}
    H_d(S/I_X)=\min \{ s_d, r\}
\end{equation}
\ethm

\bpf 
Let $k$ be an algebraically closed field. 
We use induction on $r$, the number of points in $X$.
Let $r=1$ and choose $p=[p^{(0)}:\cdots:p^{(n)}]$ such that $p^{(j)} \neq 0$ for all $j$. By \Cref{lem: monomial curve}, $S/I_p\cong k[t^{a_j} \mid 0\leq j \leq n]$. The observations below implies \Cref{eq: simple pt interpolation} for $X=\{p\}$ where $A=\{a_j \}_{j=0}^n$ and $\mathcal{N}(A)$ denotes the numerical semigroup generated $A$.
\begin{equation*}
    H_d(S/I_X)=\begin{cases}
        1 &\text{ if } d\in \mathcal{N}(A)\\
        0 &\text{else}
    \end{cases},
\; \; 
s_d=\begin{cases}
        \geq 1 &\text{ if } d\in \mathcal{N}(A)\\
        0 &\text{else}
    \end{cases}
\end{equation*}

Let $X'$ be a set of size $r-1$ for which \Cref{eq: simple pt interpolation} holds for all $d$. We claim for any fixed $D$ and some choice of a point $p$, there exists a set $X=X'\cup \{p\}$ such that \Cref{eq: simple pt interpolation} holds for $H_D(S/I_X)$. If $s_D \leq r-1$, choose $p \not \in X'$. Then
\[(I_X)_D \subseteq (I_{X'})_D =0 \Rightarrow H_D(S/I_X)=s_D < r. \]
If $r-1 < s_D$ then there exists $f \in (I_{X'})_D$ such that $f \neq 0$. Choose $p \in\PP$ such that $p \not \in X' \cup V(f)$. Such choice is possible because $X\cup V(f)\neq \PP$ by the Nullstellensatz. In particular, $f(p) \neq 0$.
Observe that $(I_X)_D = \{g \in (I_{X'})_D\; |\; g(p)=0 \}$ is the kernel of the map
\begin{equation*}
    (I_{X'})_D \xrightarrow{} k \text{ where }  g \mapsto g(p),
\end{equation*}
which is surjective since $f(p)\neq 0$. Hence $H_D(I_X)=H_D(I_{X'})-1$ and
\begin{equation*}
    H_D(S/I_X)=H_D(S)-H_D(I_X)=H_D(S)-H_D(I_{X'})+1=H_D(S/I_{X'})+1=r, 
\end{equation*}
where $r=\min\{ s_D,r\}$ by choice of $D$. This proves the claim.

Because the construction of the set $X$ depended on the degree $D$, the claim above shows for a {\it fixed D} there exists a set of size $r$ that satisfies \Cref{eq: simple pt interpolation}. Equivalently, there exists a non-empty open set in the parameter space $\PP(a_0,\ldots,a_n)^{r}$, denoted $C_D$, whose points satisfy \Cref{eq: simple pt interpolation}. Next we show the existence of a single set $Y$ of size $r$, independent of $D$, that satisfies \Cref{eq: simple pt interpolation} for all degrees $d$.

We argue to obtain an integer $t$ such that $r\leq s_d$ for all $d\geq t-a_n$. The well-formedness of $a_i$'s, \Cref{rmk: well-formed}, implies $\gcd(a_0,a_1,\ldots,a_n)=1$. Since $r$ is fixed and the Hilbert function of $S$ for $d\gg0$ is given by a quasi-polynomial 
\begin{equation*}
    H_d(S)=
        P_i(d) \text{ for } d\equiv i \pmod{\lcm(a_0,\ldots,a_n)}
\end{equation*}
where each $P_i$ is a polynomial of degree $n\geq 1$ such that $P_i(d)>0$ for $d\gg 0$, it follows that each sequence $\{P_i(d)\}_d$ is eventually increasing. Thus the existence of the desired $t$ is established. Moreover choose the smallest such integer $t$.
Consider
\begin{equation*}
    C= \underset{0\leq D \leq t+\cdot a_n}{\bigcap}C_D \cap \left(\PP(a_0,\ldots,a_n)^r\setminus Z\right ),
\end{equation*} 
where $Z$ represents the closed set of $r$-tuples of points in $\PP(a_0,\ldots,a_n)$ where each coordinate hyperplane $V(x_j)$ contains at least one of the points or where any two or more of the points coincide.

By construction $C$ is a non-empty open set since it is the intersection of finitely many non-empty open sets and points of $C$ satisfy \Cref{eq: simple pt interpolation} for all $0\leq D \leq t+a_n$. 
We finish the proof by showing this implies points of $C$ satisfy \Cref{eq: simple pt interpolation} for all $d$.
Fix an $r$-tuple of $C$ which corresponds to a set $Y\subseteq \PP$ of $r$ points. Notice some variable $x_j$ is a non-zero divisor on $S/I_{Y}$ and \Cref{equation: HF-NZD} is now applicable. 
Next we show 
\begin{equation} \label{eq: cyclic}
           H_d(S/(I_{Y}+x_j))=0 \text{ for } t\leq d\leq t+ a_n.
\end{equation}
Let $d=t+\gamma$ where $0\leq \gamma \leq a_n$. Note $t-a_n \leq t-a_j$ and $H_{d'}(S/I_Y)=r$ for all $t-a_n \leq d' \leq t+a_n$ by choice of $t$. Plugging $d$ in \Cref{equation: HF-NZD} gives 
\begin{equation*}
      H_{t+\gamma}(S/I_{Y})=H_{t+\gamma-a_j}(S/I_{Y})+H_{t+\gamma}(S/(I_{Y}+x_j)). 
\end{equation*}
Which implies \Cref{eq: cyclic}. Finally, $S/(I_{Y}+x_n)$ is a graded cyclic $S$-module and since $[S/(I_{Y}+x_n)]_d\subseteq [S/(I_{Y}+x_n)]_{d-a_n}S$ for all $d\geq a_n$, \Cref{eq: cyclic} yields
 \begin{equation*}
            H_d(S/(I_{Y}+x_j))=0 \text{ for } t+ a_n\leq d.
\end{equation*}
For an arbitrary choice of $Y$ we have shown
\begin{equation}\label{eq: HF simple pts}
    H_d(S/I_{Y})=\begin{cases}
         r & d\geq t-a_n\\
        s_d & d<t-a_n
    \end{cases}
    \overset{*}{=}\min\{r,s_d \} \text{ for all }d.
\end{equation}
Where $*$ follows from \Cref{rmk: minimum}. Thus a set of $r$ general simple points satisfies \Cref{eq: simple pt interpolation}. 
\epf

\section{Double point interpolation in the weighted projective plane} \label{section: weighted projective plane}

This section focuses on interpolation in weighted projective planes of the form $\PP(1,b,c)$, with coordinate ring $k[z,u,v]$ with $\deg(z)=1$, $\deg(u)=b$ and $\deg(v)=c$. We work with the evaluation map to show general double points have the $\AH_2(d)$ property for all $d$, which recall is to show $$H_d(S/I_X)=\min \{s_d, 3\cdot |X| \}.$$

\begin{remark} \label{rmk: specilization}
It is well-known that in order to prove a general set of points has the $\AH_n(d)$ property, it suffices to demonstrate one such set. One can further limit the size of this set to at most two values. For a fixed $n$ and $d$, it suffices to prove the property $\AH_n(d)$ for one set of double points $X$, where $|X|=r$ and $r=\LF \frac{1}{n+1} s_d \RF$ and/or $r=\LC \frac{1}{n+1} s_d \RC$; see \cite[Corrollary D.4-D.5]{HM21}. We employ this in the weighted projective plane where $n=2$.    
\end{remark} 
\begin{remark} \label{remark: Weight of 1}
The existence of a weight of $1$ ensures the following:
\begin{enumerate}[leftmargin=2em]
    \item The defining ideal of a point is a complete intersection. (\Cref{POINTS ARE CI}.)
    \item We can choose $U_0=\PP(1,b,c)\setminus V(z)$ as our locus of interpolation. Note $U_0$ is a non-empty open dense set in the parameter space, in this case $\PP(1,b,c)$. Hence our notion of general refers to sets of points that are contained in $U_0$ and is made more precise in different results. The advantage is if $X\subseteq U_0$ is a set of points and $H_d(S/I_X)=(n+1)r$ for some $d$, then \cref{def: EvaluationmMap} and \Cref{equation: HF-NZD} imply $H_{d'}(S/I_X)=(n+1)r$ for all $d'\geq d$. 
    \item \thickmuskip=0mu The coordinate ring of $\PP(1,b,c)$, $S=k[z,u,v]$, has non-decreasing Hilbert function. Since $z$ is a degree $1$ non-zero divisor, \Cref{equation: HF-NZD} yields $H_d(S)\geq H_{d-i}(S)$ for all $i$ hence $\{s_i\}$ forms a non-decreasing sequence. 
\end{enumerate}    
\end{remark}
 \Cref{NECK} shows to find a space  in which $r$ general double points are $\AH_2(d)$ for all $d$, it is necessary for $c$ to not be too large compared to $b$ and $r$. \Cref{mon3k} provides a numerical observation towards this end.
 
\blm \label{mon3k}
Fix positive integers $b,c$ and $r$ such that $rb<c<(r+1)b$. For the coordinate ring $S$ of $\PP(1,b,c)$, the smallest $d$ such that $s_d=3r$  is $d=(r-1)b+c$.
\elm

\bpf 
For $d=(r-1)b+c$, observe that $\deg(v^2)>d$ and we get
\begin{equation} \label{eq: V^2}
     k[z,u,v]_{d}=k[z,u]_{d}\oplus v\cdot k[z,u]_{d-c}.
\end{equation}
This implies by \Cref{eq: sd for 1b} the claimed identity
\begin{equation*}
s_d=\left \lfloor \frac{d}{b} \right \rfloor +1 + \left \lfloor \frac{d-c}{b} \right \rfloor +1=3r.
\end{equation*}
To see the minimality of $d$, note for $d'<d$, an analogous decomposition to \Cref{eq: V^2} implies
\begin{equation*}
s_{d'}=\left \lfloor \frac{d'}{b} \right \rfloor +1 + \left \lfloor \frac{d'-c}{b} \right \rfloor +1<3r. \qedhere
\end{equation*}
\epf

\bpr \label{NECK}
If $r$ general double points are $\AH_2(d)$ for all $d$ in $\PP(1,b,c)$, then 
\begin{eqnarray*}    
    c &\leq& (r+1) \text{ for } b=1\\    
    c &<& (r+1)b \text{ for } b\neq 1  
\end{eqnarray*}
\epr

\bpf
Let $X=\{2 p_i\}_{i=1}^r$ be a set of points such that $p_i\neq[0:1:0]$ Assume for contradiction that $c>(r+1)b$.
It suffices to show $s_{2rb}\leq 3r$. This claim implies $(I_X)_{2rb}=0$. But by \Cref{ptsplne}, the square of the ideal of each point, $I_{p_i}^2$, contains a generator, $f_i$, of degree $2b$. Clearly $ F=\prod_{i=1}^r f_i \in I_X $ and $\deg(F)=2rb$. This contradicts $X$ being $\AH_2(2rb)$.

To verify $s_{2rb}\leq 3r$, observe $\deg(v^2)=2c>2rb$ and \Cref{eq: V^2} yields
\begin{equation*}
    s_{2rb}=\LF \frac{2rb}{b} \RF +1 + \LF \frac{2rb-c}{b} \RF +1\leq 2r+1+(r-2)+1\leq 3r. \qedhere
\end{equation*}
\epf 
\subsection{Small number of points in $\PP(1,b,c)$} \label{subsection: small number of double points}
\Cref{remark: one degree} explains our proof strategy for the rest of \cref{section: weighted projective plane}. 
\begin{remark} \label{remark: one degree}
Let $M_d$ denote the matrix of $\ev_X:(S)_d\to k^{3r}$. We wish to demonstrate one set of double points $X$, of size $r$, that is $\AH_2(d)$ for all $d$. If  there is a degree $d$ such that $s_d=3r$, then it suffices to check the rank of $M_d$ is maximal for smallest such $d$. By \Cref{remark: Weight of 1} and \Cref{prop: equiv evaluation map}, it follows that $X$ is $\AH_2(d')$ for all $d'\geq d$. Moreover, for $d''\leq d$ having a weight of $1$ implies
\begin{equation*}
      (I_X)_{d} = 0 \Rightarrow (I_X)_{d''}=0.
\end{equation*}
Combining this with \Cref{remark: Weight of 1} -- part 3
implies $X$ is $\AH_2(d'')$ for $d'' \leq d$. For certain numbers $r$  the degree where $s_d=3r$ is identified in \Cref{mon3k}.  

We show $M_d$ has full rank by exhibiting a set of points at which $\det(M_d)$, a polynomial in $2r$ variables corresponding to the coordinates of $r$ points, is not zero. Thus $\det(M_d)$ is not the zero polynomial. Hence the notion of general will refer to points whose first coordinate is nonzero and last two coordinates are outside of $V(\det(M_d))\subseteq \PP^{2r}$. 
\end{remark}
In \Cref{1POINT} and \Cref{2POINTS} we give necessary and sufficient condition for $1$ point and $2$ points to be $\AH_2(d)$ for all $d$.

\bpr \label{1POINT}
One general double point in $\PP(1,b,c)$ is $\AH_2(d)$ for all $d$ if and only if
\begin{enumerate}
    \item $c=2$ when $b=1$,
    \item $b<c<2b$ when $b\neq 1$.
\end{enumerate} 
\epr 

\bpf 
$(\Rightarrow)$ This is a direct consequence of \Cref{rmk: well-formed} and \Cref{NECK}.

\noindent $(\Leftarrow)$ 
Let $X=\{2p\}$ with $p\not\in V(z)$. When $b=1$ and $c=2$ observe that $H_2(S/I_X)=3$ because $s_2=4$ and $H_2(I_X)=1$. Further $H_1(S/I_X)=1$. By a similar reasoning to \Cref{remark: one degree} $X$ is $\AH_2(d)$ for all $d$. Suppose $b<c<2b$. By \Cref{mon3k} and \cref{remark: one degree}, it suffices to check $\AH_2(d)$ for $X$ in degree $d=c$. By \Cref{ptsplne}, the smallest degree of a generator in $I_X$ is $2b>c$. Hence
$  H_{c}(S/I_X)=3=\min\{s_c, 3 \}$.
\epf
Our results so far have been characteristic free. In \Cref{2POINTS} to ensure certain determinants are non-zero we make assumptions on $\Char(k)$.
\bpr \label{2POINTS}
Let $k$ be an infinite field with $\Char(k) \not \in \{ 2,3,b,b+c\}$. Two general double points in $\PP(1,b,c)$, where $(1,b,c)\neq(1,2,3)$, are $\AH_2(d)$ for all $d$ if and only if 
\begin{enumerate}
    \item $b=1$ and $c=2$ or $c=3$,
    \item $2b<c<3b$, or
    \item $\frac{3}{2}b<c<2b$
\end{enumerate}
\epr 
\bpf 
$(\Leftarrow)$
If $b=1$, $c=3$ and $X$ is a set of two double points, one can use the defining equations of the points to show $H_3(S/I_X)=5=s_3$ and $H_4(S/I_X)=6$. By a similar reasoning to \Cref{remark: one degree}, $X$ is $\AH_2(d)$ for all $d$.
Next let $2b<c<3b$ or b=1 and c=2. \Cref{mon3k} and direct computations imply $s_{b+c}=6$ and this is the smallest degree in which $S$ has $6$ monomials: $z^{b+c}, z^cu,z^{c-b}u^2, z^{c-2b}u^3, z^bv, uv$. The evaluation matrix with respect to this basis for points with coordinates $[1:p_1:p_2]$ and $[1:q_1:q_2]$ is

\begin{center}
$M_{b+c}=\left[\begin{matrix}
 b+c&cp_1&(c-b)(p_1)^{2}&(c-2b)\,(p_1)^{3}&b\,p_2&0\\
     0&1&2\,p_1&3\,(p_1)^{2}&0&p_2\\     b+c&cq_1&(c-b)(q_1)^{2}&(c-2b)\,(q_1)^{3}&b\,q_2&0\\
     0&1&2\,q_1&3\,(q_1)^{2}&0&q_2\\     
     0&0&0&0&1&p_1\\
     0&0&0&0&1&q_1\\
     \end{matrix}\right]$

\end{center}    
with determinant $\det(M_{b+c})=(-1)(p_1-q_1)^5(b+c)^2\neq 0$.

Finally suppose $\frac{3}{2}b<c<2b$. Adapting the proof of \Cref{mon3k}, one can show $d=3b$ is the first degree where $s_d=6$. The evaluation matrix with respect to the basis $z^{3b}, z^{2b}u,z^{b}u^2, u^3,  z^{2b-c}uv, z^{3b-c}v$ and the points fixed before is
\begin{center}
$M_{{3b}}=\left [\begin{matrix}
     3\,b&2\,b\,p_1&b\,(p_1)^{2}&(p_1)^{3}&(2b-c)\,p_1\,p_2&(3b-c)\,p_2\\
     0&1&2\,p_1&3\,(p_1)^{2}&p_2&0\\
     0&0&0&0&p_1&1\\
     3\,b&2\,b\,q_1&b\,(q_1)^{2}&(q_1)^{3}&(2b-c)\,q_1\,q_2&(3b-c)\,q_2\\
     0&1&2\,q_1&3\,(q_1)^{2}&q_2&0\\
     0&0&0&0&q_1&1\\
     \end{matrix}\right]$
\end{center}
where $\det(M_{3b})= 3b (p_1-q_1)^{3}\left[3\,b\,(p_1-q_1)^{2}-2\,p_1^{2}-2\,p_1\,q_1-2\,q_1^{2}\right]$.

$(\Rightarrow)$ Suppose a set of two general points, $X$, is $\AH_2(d)$. General in this context refers to $X$ not containing any of the points $[0:1:0]$ or $[1:0:0]$. When $b$ is $1$ the result is immediate from \Cref{NECK} so suppose $b\neq 1$
\Cref{NECK} results in $c< 3b$ and if $2b<c<3b$ we are done. Suppose $b<c<2b$. We wish to show $\frac{3}{2}b<c$. The equality $3b=2c$ is impossible by well-formedness and the hypothesis which excludes the possibility $b=2, c=3$. Therefore one may assume for contradiction that $2c<3b$. Direct calculations show $s_{2c}=6$. By assumption 
\begin{equation*}
    H_{2c}(S/I_X)=\min\{6,s_{2c} \}=6.
\end{equation*}
This implies $(I_X)_{2c}=0$. We arrive at a contradiction by constructing a non-zero curve in $(I_X)_{2c}$. Since $b<c<2b$ it follows $s_c=3$ and thus there is a degree $c$ curve with defining equation $F$ passing through the two points. Since $F^2\in [I_X]_{2c}$ we have reached the desired contradiction.
\epf 

\subsection{A sufficient condition for $r$ points} \label{subsection: Sufficient k points}
The main goal of this subsection is to give a sufficient condition for the weights so that $r$ general double points in $\PP(1,b,c)$ are $\AH_2(d)$ for all $d$. The proof of \Cref{suffkpt} is a generalization of case 1 in \Cref{2POINTS}.
\bpr \label{suffkpt}
Let $r \in \N$. If $rb<c<(r+1)b$ then a set of $r$ general double points in $\PP(1,b,c)$ is $\AH_2(d)$ for all $d$.
\epr 

\bpf 
Let $X=\{2p_i\}$, where the points $p_i=[1:p_i^{(1)}:p_i^{(2)}]$ for $1\leq i\leq r$ satisfy the following conditions:
 \begin{enumerate}
     \item $p_i^{(1)}\neq 0$ and $  p_i^{(2)} \neq 0 \text{ for all } i$,
     \item $p_i^{(1)} \neq p_{i'}^{(1)}$ for all $i\neq i'$.
 \end{enumerate}
Let $c=rb+j$ and $d=(r-1)b+c=(2r-1)b+j$ which, by \Cref{mon3k}, is the smallest degree $d$ where $s_d=3r$. By \Cref{remark: one degree} it suffices to show $H_{d}(S/I_X)=3r$. 

We order the monomials of degree $d$ as follows
\begin{equation*}
 z^{((2r-1)-s)b+j}u^{s}, z^{((r-1)-t)b}u^tv  \text{ where } 0\leq s\leq 2r-1, \;  \; 0\leq t \leq r-1.    
\end{equation*}   
The rows of the evaluation map correspond to $\frac{\partial}{\partial z}|_{p_i}, \frac{\partial}{\partial u}|_{p_i}$ and $\frac{\partial}{\partial v}|_{p_i}$ and the columns to monomials as ordered.  Based on our ordering of the monomials, the first $2r$ monomials do not contain $v$ and the last $r$ do. 



The matrix $M_d$ of the evaluation map with rows indexed by $\frac{\partial}{\partial z}|_{p_i}, \frac{\partial}{\partial u}|_{p_i}$ in the first block and $\frac{\partial}{\partial v}|_{p_i}$ in the second block becomes

\begin{equation*}
\displaystyle
\arraycolsep=2.5pt 
\left[
    \begin{array}{c|c}
        \begin{array}{cccc}
        (2k-1)b+j & ((2k-2)b+j)p_1^{(1)} & \cdots &(p_1^{(1)})^{2k-1}\\
        \vdots & \vdots && \vdots  \\
        (2k-1)b+j & ((2k-2)b+j)p_k^{(1)} & \cdots & j(p_k^{(1)})^{2k-1}
        \end{array}   &  ** \\ 
        \hline
         \bigzero & 
         \begin{array}{ccccc}
        1 & p_1^{(1)} & (p_1^{(1)})^2 & \ldots & (p_1^{(1)})^{k-1}\\
        \vdots  & \vdots & \vdots && \vdots\\
        1 & p_k^{(1)} & (p_k^{(1)})^2 &\ldots &  (p_k^{(1)})^{k-1}    
        \end{array}
    \end{array}
\right]
\end{equation*}
The $2r\times 2r$ minor in the upper left quadrant corresponds to the evaluation map in degree $d$ for the points $[1:p_i^{(1)}]\in\mathbb{P}(1,b)$. Assumption $(1)$ allows us to view the points in $\PP(1,b)$ and this minor is non-zero by \Cref{GK}. The bottom right block is the $r\times r$ Vandermonde minor which is non-zero by assumption $(2)$. Hence $M_d$ has full rank which completes the proof.
\epf

\section{The Alexander--Hirschowitz theorem in $\PP(1,2,3)$}\label{123}
\noindent This section is dedicated to the proof of the following result. 
\bthm[Alexander Hirschowitz theorem in $\PP(1,2,3)$]\label{AH123} 
Let $k$ be a field where $\Char(k) \not \in \{2,3,5 \}$.
Let $X \subseteq \PP(1,2,3)$ be a set of $r$ general double points with defining ideal $I_X$. Then $X$ is $\AH_2(d)$ for all $d$ with no exceptions. That is 
\begin{equation*}
    H_{d}(S/I_X)=\min \{s_d, 3r \} \text{ for all }d.
\end{equation*}
\ethm

\bpf 
We use strong induction on $d$. By \Cref{rmk: specilization}, for each $d$ and for each of the values $r=\LF \frac{1}{3} s_d \RF$ and $r=\LC \frac{1}{3} s_d \RC$ it suffices to demonstrate one set of size $r$ that is $\AH_2(d)$.
\Cref{IND6} proves $6$ base cases for the induction, that is any finite set of \gdp is $\AH_2(d)$ for $0\leq d\leq 5$. 

For any $d \geq 6$, and either value of $r$, we satisfy the hypothesis of \Cref{TERA} which constructs a set of size $r$ that is $\AH_2(d)$. This entails showing:
\begin{itemize}[leftmargin=1em]
    \item The numerical condition of \Cref{TERA}, \Cref{eq: TERRACINI CONDITION}, is satisfied by \Cref{TERANUM}. Let $q$ denote the positive integer mentioned in this condition.
    \item Part~(1) of \Cref{TERA} is the content of \Cref{GK} when $r_i=2$ for all $i$.
    \item Part~(2) of \cref{TERA} follows because by the induction hypothesis a set of general double points of any size in $\PP(1,2,3)$ is $\AH_2(t)$ for $1\leq t \leq d-1$. In particular, a set of $r-q$ general double points in $\PP(1,2,3)$  is $\AH_2(d-i)$ and $\AH_2(d-2i)$ for $i\in \{1,2,3\}$. In  \Cref{lem: Chandler} we show, using \Cref{C13} and the previous considerations, that adding $q$ general simple points lying on a projective line, to the set of $r-q$ \gdp  preserves the $\AH_2(d-i)$ property. This constructs the desired set in part~(2) of \Cref{TERA}. \hfill $\qedhere$  
\end{itemize}
\epf 
\Cref{TERA} is the main inductive step used in \Cref{AH123} and in the standard graded case is proven in \cite{HM21} and \cite{BO08}. The assumption on the ambient space in \Cref{TERA} allows us to conclude the defining ideal of a simple point is a complete intersection (\Cref{POINTS ARE CI}, \Cref{defidplane}), a fact which is fundamental to the proof presented in \cite{HM21}. In particular when $n\geq 3$  all general points in $\PP$ are considered in $U= \bigcup_{\{ j\; \mid\;  a_j=1\} } U_j$ to ensure simple points are complete intersections.

\bthm[Generalized Terracini's Inductive argument] \label{TERA}
Consider $\PP=\PP(1,\ldots,a_n)$, with coordinate ring $S=k[x_0,\ldots,x_n]$ where $\deg(x_i)=a_i$.  Assume $1\leq q \leq r$ and $d\in Z_+$ satisfy one of the following inequalities for at least one $i$ in the range $0\leq i\leq n$,
\begin{equation} \label{eq: TERRACINI CONDITION} 
    (n+1)r-s_{d-a_i} \leq nq \leq \overline{s}_d \qquad \text{ or } \qquad  \overline{s}_d \leq nq \leq (n+1)r-s_{d-a_i},
\end{equation}
where $\overline{S}=S/(x_i)$ is the coordinate ring of the hyperplane $L=V(x_i)\cong \PP(a_0,\ldots,\hat{a_i},\ldots,a_n)$ and $\overline{s}_d=\dim_k(\overline{S}_d)$.
If
\begin{enumerate} 
    \item a set of $q$ general double points in $L$ is $\AH_{n-1}(d)$ and
    \item the union of a set of $r-q$ general double points in $\PP$ and a set of $q$ general simple points in $L$ is $\AH_n(d-a_i)$
\end{enumerate}
then a set of $r$ general double points in $\PP$ is $\AH_n(d)$.
\ethm
\bpf 
See \cref{Appendix: 123}.
\epf
It is notable that Terracini originally came up with \Cref{eq: TERRACINI CONDITION} to study configurations of points in $\PP^3_k$. For fixed $d$ and $r$ we refer to the existence of $q$ that satisfies one of the inequalities in \Cref{eq: TERRACINI CONDITION} as ``the numerical condition'' of \Cref{TERA}. In the straight projective space there are infinitely many values of $r$ and $d$ where this condition fails. See \cite[Section 4]{BO08}. Even in $\PP^2_k$ this numerical condition fails for $d=2,4$ and $r=2, 5$, respectively, which are exactly the exceptions in \Cref{thm: AH}.

In \Cref{TERANUM} we show this numerical condition holds for all $d$ in the case of $\PP(1,2,3)$ and based on \cref{thm: 123 ONLY} it is the only such space. The main difference with $\PP^2_k$ is the different numerics of projective lines in $\PP(1,2,3)$.

Part (2) of the hypothesis of \Cref{TERA} is satisfied with the aid of \Cref{C13} which is proved in \Cref{Appendix: 123}. Originally due to K. Chandler \cite[Lemma 3]{Chandler01}, this lemma is modified to our setting and shows if a certain numerical criterion is satisfied, then one can add simple points all lying on a hyperplane to a set of points that are $\AH_n(d)$ while preserving the property. This is made precise in \Cref{C13}.

\blm[Chandler's Lemma] \label{C13}
Let $I$ be a saturated homogeneous ideal in $S=k[x_0,\ldots,x_n]$ with $\deg(x_i)=a_i$, the homogeneous coordinate ring of $\PP(a_0,\ldots,a_n)$. Assume $l$ is regular on $S/I$. Let $\overline{S}=S/(l)$ and $\deg(l)=i$. Fix $d \in Z_+$. Then following are equivalent:
\begin{enumerate}
    \item A general set $Y_0$ of $q$ reduced points in $V(l)$ satisfies
    $$H_{d}(S/(I \cap J_{Y_0}))=H_{d}(S/I)+q $$

    \item $H_{d}(S/I)+q \leq H_{d-i}(S/I)+H_{d}(\overline{S})$.
\end{enumerate}
\elm


Next we prove $6$ base cases of induction used in proof of \Cref{AH123}.

\bpr\label{IND6}
Let $k$ be an algebraically closed field such that $\Char(k) \not \in \{2,3, 5 \}$. Let $X$ be a set of $r$ double points with $I_X$ as its defining ideal. Assume no point of $X$ lies on $V(z)$. Further assume for distinct points $p_i$ and $p_j$ in $X$ where $p_i=[p_i^{(1)}:p_i^{(2)}:p_i^{(3)}]$ and $p_j=[p_j^{(1)}:p_j^{(2)}:p_j^{(3)}]$, $p_i\symb \neq p_j\symb $ for all $i\neq j$. Then $X$ is $\AH_2(d)$ for all $0 \leq d \leq 5$.
\epr 
\bpf 
By \Cref{rmk: specilization} it suffices to consider $r=\LF \frac{1}{3} s_d \RF$ and/or $r= \LC \frac{1}{3} s_d \RC$. Note that for $d\leq 5$ we have $s_d\leq 5$ and thus $r\leq 2$. Anytime $r=1$, the result follows from \Cref{1POINT}, whose proof requires the points not to lie on $V(z)$. For $d=4$ or $d=5$ and $r=2$, the result follows from \Cref{2POINTS}, whose proof requires  $p_i\symb \neq p_j\symb$.
\epf

We address a subtly in our notion of general and the construction in \Cref{TERA}. The condition that no point of $X$ lies on $V(z)$ in \Cref{IND6} does not obstruct the procedure in \Cref{TERA}, which requires specializing $q$ points to a hyperplane, $L$. These $q$ points need to be general {\em in $L$}; but neither condition $(1)$ or $(2)$ requires $q$ general points in $\PP(1,2,3)$. 

Let $d \geq 6$. Assume any finite set of general double points $X$ 
is $\AH_2(t)$ for $0\leq t \leq d-1$. We will use \Cref{TERA}, to show for $r=\LF \frac{1}{3}s_d \RF$ and $r=\LC \frac{1}{3}s_d \RC$ that there exists a set of $r$ double points that are $\AH_2(d)$. This inductive procedure for $d=14$ is demonstrated in \Cref{example: Terra step}, after which the rest of this section is dedicated to establishing the numerical condition in \Cref{TERANUM} and condition $(2)$ in \Cref{lem: Chandler}. To study \Cref{eq: TERRACINI CONDITION} in the context of the $\PP(1,2,3)$, we introduce the following notation.

\begin{notation}\label{not: s'}
 Let  $S=k[z,u,v]$ denote the coordinate ring of $\PP(1,2,3)$ where $\deg(z)=1$, $\deg(u)=2$ and $\deg(v)=3$ and recall that $s_d = \dim_k(S_d)$. Let $S'=k[u,v]$, $S''=k[z,v]$ and $S'''=k[z,u]$ denote the coordinate rings of the  projective lines $L_1\cong \PP(2,3)$, $L_2\cong \PP(1,3)$, and $L_3 \cong \PP(2,3)$ respectively, where $L_1=V(z), L_2=V(u), L_3=V(v)$ and denote the Hilbert function for each of these rings in degree $d$ by $s'_d, s''_d, s'''_d$, respectively. 
 \end{notation}
These values can be computed explicitly.

\begin{remark}\label{FORMULA123}
For any $d \in \N_0$ we have 
\begin{enumerate}
    \item $s_d=\lfloor d^2/12+d/2+1 \rfloor$ according to \cite[Page 547]{stanley1}.    
    \item $s'''_d=\lfloor d/2 \rfloor +1$ cf.~\Cref{eq: sd for 1b}.
    \item $s''_d=\lfloor d/3 \rfloor +1$ cf.~\Cref{eq: sd for 1b}.
    \item
   $
    s'_d = 
    \begin{cases}
          \LF d/6  \RF   &d \equiv 1 \pmod 6   \\   
        \LF d/6 \RF +1 &\text{otherwise}.
    \end{cases}
$ cf.~ \Cref{eq: sd for ab}.
\end{enumerate} 
\end{remark}

\bex \label{example: Terra step}
We exhibit Terracini's technique for $d=14$. Since $s_{14}=24$, by \Cref{rmk: specilization} it suffices to show $8$ double points are $\AH_2(14)$. Checking the numerical condition (\Cref{eq: TERRA SET 1} and \Cref{eq: TERRA SET 2}) the inequality $3(8)-s_{11}=8\leq s'''_{14}=8$ is the only one for which $q$ exists; this corresponds to specializing $4$ double points to $\PP(1,2)$.

\begin{center}
\begin{tikzpicture}[thick]
\fill (7,0) circle (2pt) node[above] {2};
\fill (6.5,.5) circle (2pt) node[above] {2};
\fill (7.5,1) circle (2pt) node[above] {2};
\fill (6.5,.5) circle (2pt) node[above] {2};
\fill (8,.3) circle (2pt) node[above] {2};
\fill (6.2,-1) circle (2pt) node[above] {2};
\fill (6.5,-1) circle (2pt) node[above] {2};
\fill (7,-1) circle (2pt) node[above] {2};
\fill (7.5,-1) circle (2pt) node[above] {2};
\path [draw=,snake it, ] (6,-1) -- (8,-1) node [label=below left:{$\PP(1,2)$}] {};
\path [draw=,snake it]  (5.5,-1) -- (5.5,1)  node [midway, label= left :{$\PP(2,3)$}] {};;
\path [draw=,snake it]  (8.5,-1) -- (8.5,1)  node [midway, label=right:{$\PP(1,3)$}] {};

\fill (2,-2) circle (2pt) node[above] {2};
\fill (1.5,-1.5) circle (2pt) node[above] {2};
\fill (2.5,-1) circle (2pt) node[above] {2};
\fill (3,-1.7) circle (2pt) node[above] {2};

\path [draw=,snake it, ] (1,-3) -- (3,-3) node [label=below left:{$\PP(1,2)$}] {};
\draw (1.2,-3) circle (2pt) node[above] {1};
\draw (1.5,-3) circle (2pt) node[above] {1};
\draw (2,-3) circle (2pt) node[above] {1};
\draw (2.5,-3) circle (2pt) node[above] {1};

\path [draw=,snake it, ] (10.5,-3) -- (12.5,-3) node [label=below left:{$\PP(1,2)$}] {};
\fill (10.7,-3) circle (2pt) node[above] {2};
\fill (11,-3) circle (2pt) node[above] {2};
\fill (11.5,-3) circle (2pt) node[above] {2};
\fill (12,-3) circle (2pt) node[above] {2};
\end{tikzpicture}
\end{center}
The bottom right part, $\PP(1,2)$ with $4$ double points, is considered intrinsically in $\PP(1,2)$ and is $\AH_1(14)$ by \Cref{GK}, proving $(1)$ in \Cref{TERA}. To justify $(2)$ in \Cref{TERA}, we need the bottom left figure, regarded in $\PP(1,2,3)$, to be $\AH_2(11)$. First we show $4$ general double points are $\AH_2(11)$ then by \Cref{C13} their union with 4 simple points on $\PP(1,2)$ (empty circles) preserves the $\AH_2(11)$ property.

Since $s_{11}=16$, to show that any number of points is $\AH_2(11)$ it suffices to demonstrate $\LF\frac{1}{3} s_{11} \RF=5 $ and $\LC\frac{1}{3}s_{11} \RC=6 $ points are $\AH_2(11)$. We employ \Cref{TERA} again and focus on $5$ points which implies $4$ points are $\AH_2(11)$. Checking \Cref{eq: TERRA SET 1} and \Cref{eq: TERRA SET 2}, all three inequalities yield a value $q$. We choose $15-s_8=5\leq s'''_{11}=6$ which is to specialize $3$ double points to $\PP(1,2)$.
\vspace{.5cm}
\begin{center}
\begin{tikzpicture}[thick]
\fill (7,0) circle (2pt) node[above] {2};
\fill (7.5,.5) circle (2pt) node[above] {2};
\fill (7,-1) circle (2pt) node[above] {2};
\fill (6.5,-1) circle (2pt) node[above] {2};
\fill (7.5,-1) circle (2pt) node[above] {2};
\path [draw=,snake it, ] (6,-1) -- (8,-1) node [label=below left:{$\PP(1,2)$}] {};
\path [draw=,snake it]  (5.5,-1) -- (5.5,1)  node [midway, label= left :{$\PP(2,3)$}] {};
\path [draw=,snake it]  (8.5,-1) -- (8.5,1)  node [midway, label=right:{$\PP(1,3)$}] {};

\fill (2,-2) circle (2pt) node[above] {2};
\fill (2.5,-1.5) circle (2pt) node[above] {2};
\draw (2,-3) circle (2pt) node[above] {1};
\draw (2.5,-3) circle (2pt) node[above] {1};
\draw (1.5,-3) circle (2pt) node[above] {1};
\path [draw=,snake it, ] (1,-3) -- (3,-3) node [label=below left:{$\PP(1,2)$}] {};

\path [draw=,snake it, ] (10.5,-3) -- (12.5,-3) node [label=below left:{$\PP(1,2)$}] {};
\fill (12,-3) circle (2pt) node[above] {2};
\fill (11.5,-3) circle (2pt) node[above] {2};
\fill (11,-3) circle (2pt) node[above] {2};

\end{tikzpicture}
\end{center}
By \Cref{IND6}, $2$ general double points are $\AH_2(8)$ in $\PP(1,2,3)$. By \Cref{lem: Chandler} their union with $3$ simple points on $\PP(1,2)$ keeps the configuration $\AH_2(8)$. Hence any set of general double points in $\PP(1,2,3)$ is $\AH_2(14)$.
\eex 

In \Cref{TERANUM} we establish the numerical condition of \Cref{TERA}. 

\blm \label{TERANUM}
Let $r=\lfloor\frac{1}{3} s_d \rfloor$ or $r=\lceil \frac{1}{3} s_d \rceil$ and $d \geq 6$. Then the following equalities can not hold simultaneously.
\begin{equation} \label{eq: 3eq}
    \begin{split}
     &3r-s_{d-1}=s'_d= \text{positive odd integer} ,\\
     &3r-s_{d-2}=s''_d=\text{positive odd integer} ,\\
     &3r-s_{d-3}=s'''_d= \text{positive odd integer}        
    \end{split}
\end{equation}
Hence there always exists an even number, say $2q$, such that at least one of the following inequalities holds.
\begin{center}
\begin{minipage}{0.45\textwidth}
\begin{equation} \label{eq: TERRA SET 1}
\begin{split}
      3r-s_{d-1} &\leq 2q \leq s'_d\\
       3r-s_{d-2} &\leq 2q \leq s''_d\\
      3r-s_{d-3} &\leq 2q \leq s'''_d\\    
\end{split}
\end{equation}
\end{minipage}
\begin{minipage}{0.45\textwidth}
\begin{equation} \label{eq: TERRA SET 2}
\begin{split}
      s'_d &\leq 2q \leq 3r-s_{d-1}\\
      s''_d &\leq 2q \leq 3r-s_{d-2} \\
      s'''_d &\leq 2q \leq 3r-s_{d-3}\\
    \end{split}    
\end{equation}
\end{minipage}    
\end{center}
Furthermore, one can pick $q$ such that $1 \leq q \leq r$.
\elm

\bpf 
Assume for the sake of contradiction all equalities in \Cref{eq: 3eq} hold.
The identity $s'_d+s_{d-1}=s_d=3r$ follows from \Cref{equation: HF-NZD}. This implies $s_d$ is divisible by $3$ thus $r=\LF 1/3 s_d \RF = \LC 1/3 s_d \RC$.

Let $d=6k+i$ where $k\geq 1$ since $d\geq 6$. Plugging $d$ into the formulas from \Cref{FORMULA123} gives

\begin{enumerate}[label=\Roman*.]  
\item $s_d=\lfloor d^2/12+d/2+1 \rfloor=3k^2+(3+i)k+1+\lfloor i^2/12+i/2 \rfloor$

\item $s'''_d=\lfloor d/2 \rfloor +1=3k+\lfloor i/2 \rfloor +1=$ odd

\item $s''_d=\lfloor d/3 \rfloor +1=2k+\lfloor i/3 \rfloor+1=$ odd 

\item $s'_d =\begin{cases}
         k + \LF i/6  \RF   &d \equiv 1 \pmod 6   \\   
        k + \LF i/6 \RF +1 &\text{otherwise} 
    \end{cases}=$  odd
\end{enumerate}

\noindent Part III implies $i \neq 3,4,5$, otherwise the $s''_d$ is not odd. If $i=2$ then IV implies $k$ is even which implies part II, $3k+2$, is even. Hence $i\neq 2$. Since $3$ divides $s_d$, I implies $i\neq 0$. Finally suppose $i=1$. Part IV implies $k$ must be odd. Hence $3k+1$ is even which implies II can not hold. It follows that one of the 6 inequalities in \Cref{eq: TERRA SET 1} or \Cref{eq: TERRA SET 2} holds for any $d\geq 6$. This proves the existence of $q$.

It remains to show one can pick $q$ to satisfy $1 \leq q \leq r$. For $1\leq q$, it suffices to show either the lower bound in \Cref{eq: TERRA SET 2} or the upper bound in \Cref{eq: TERRA SET 1}, namely $s'_d$, $s''_d$ and $s'''_d$ is at least $2$ for $d\geq 6$. For all $i$ and by \Cref{equation: HF-NZD}
\begin{equation*}
    s''_i \leq s''_{i+1}, \; \; 
    s'''_i \leq s'''_{i+1},
\end{equation*}
and $s''_6=3$ and $s'''_6=4$. Therefore $1\leq q$ if either of the bottom two inequalities in \Cref{eq: TERRA SET 1} or \Cref{eq: TERRA SET 2} holds. If $d=6$ or $d\geq 8$ we also get $s'_d \geq 2$.
For $d=7$, observe that
    \begin{equation*}
        3r-s_{6}=\begin{cases}
           0 &\text{ if } r= \LF \frac{1}{3} s_7 \RF\\
           3 &\text{ if } r=\LC \frac{1}{3} s_7 \RC
        \end{cases} 
        \; ,\; s'_7=1.
    \end{equation*}
For $r= \LF \frac{1}{3} s_7 \RF$ the inequality $3r-s_{6} \leq s'_7$ holds true but we do not have a choice of $q$. But we can ignore this and use another inequality because
\begin{equation*}
    3r-s_{d-3}=3 \leq s'''_d=4    
\end{equation*}
also holds when $r=\LF \frac{1}{3} s_7 \RF $. Therefore $ 1\leq q$ for either value of $r$ and $d \geq 6$.

To show $q\leq r$, it is sufficient that either the lower bound in \Cref{eq: TERRA SET 1} or the upper bound in \Cref{eq: TERRA SET 2} is less than or equal to $2r$; that is $3r-s_{d-i} \leq 2r $ for $i \in \{1,2,3 \}$. In \Cref{eq: TERRA SET 1} this is sufficient because if $2r$ is less than or equal to the right hand side, we may choose $q$ to be the smallest number such that $2q$ is in the given interval. 
Note that
\begin{equation*}
    3r-s_{d-i} \leq 2r \Leftrightarrow r \leq s_{d-i}.
\end{equation*}
Moreover, considering $r=\LC \frac{1}{3}s_d \RC$ and $i=3$ is enough because
\begin{equation*}    
 \LF\frac{1}{3}s_d \RF \leq \LC\frac{1}{3} s_d \RC  \leq s_{d-3} \leq s_{d-2} \leq s_{d-1}
\end{equation*}
Replacing $z=x_i$ and $M=S$ in \Cref{equation: HF-NZD} for the equivalence $(*)$ yields
\begin{equation} \label{eq: Equivalence}
     \LC\frac{1}{3} s_d \RC  \leq s_{d-3} \Leftrightarrow \frac{1}{3} s_d < s_{d-3} \underset{*}{\Leftrightarrow} \frac{1}{3}s'''_d+\frac{1}{3}s_{d-3} < s_{d-3}  \Leftrightarrow s'''_d < 2\cdot s_{d-3}
\end{equation}
\Cref{NumFact 1} finishes the proof by showing \Cref{eq: Equivalence} for $d\geq 6$.
\epf 
\noindent \Cref{lem: Chandler} proves when \Cref{TERANUM} is satisfied a set $X$ of $r-q$ double points and $q$ simple points on $L$ is $\AH_2(d-i)$ for $i\in \{1,2,3 \}$. By \Cref{def: AH} this means
\begin{equation*}
    H_{d-i}(S/I_X)=\min \{ s_{d-i}, 3(r-q)+q\}.
\end{equation*}
By \Cref{TERANUM} we know only one of the six inequalities above may be satisfied for any given $d$ and specified values of $r$. Therefore the proof shows any one of the inequalities implies $(2)$ in \Cref{TERA}. The proof makes repeated use of \Cref{C13} and the induction hypothesis.

\blm \label{lem: Chandler}
Let $d\geq 6$. By \Cref{TERANUM} at least one of the inequalities
\begin{center}
\begin{minipage}{0.45\textwidth}
\begin{equation*}
\begin{split}
      3r-s_{d-1} &\leq 2q \leq s'_d\\
       3r-s_{d-2} &\leq 2q \leq s''_d\\
      3r-s_{d-3} &\leq 2q \leq s'''_d\\    
\end{split}
\end{equation*}
\end{minipage}
\begin{minipage}{0.45\textwidth}
\begin{equation}\label{eq: 6.8}
\begin{split}
      s'_d &\leq 2q \leq 3r-s_{d-1}\\
      s''_d &\leq 2q \leq 3r-s_{d-2} \\
      s'''_d &\leq 2q \leq 3r-s_{d-3}\\
    \end{split}    
\end{equation}
\end{minipage}    
\end{center}
is satisfied and $1 \leq  q \leq r$. 

Let $L$ denote the projective line with defining variable of degree $i$ where an inequality is satisfied. Assume for any $r'<r$ a general set  of $r'$  double points in $\PP(1,2,3)$ is both $AH_2(d-i)$ and $AH_2(d-2i)$. Then the union of a set of $r-q$ general double points in $\PP(1,2,3)$  and a set of $q$ general simple points of $L$ is $\AH_2(d-i)$.
\elm

\bpf 
Denote by $\overline{S}$ the coordinate ring of $L$ where $i\in \{ 1,2,3\}$ is the corresponding index chosen according to the inequality that holds in \cref{eq: 6.8}. 

Let $Y$ be a set of $r-q$ general double points in $\PP(1,2,3)\backslash L$ with  defining ideal $I_{q}$ and $J_{q} \subseteq S$ be the defining ideal of $q$ general simple points of $L$ perceived in $\PP(1,2,3)$. The ideal $I=I_{q}\cap J_{q}$ defines the union of $Y$ with $q$ simple points on $L$.

\noindent \textbf{Case 1:}
Suppose $3r-s_{d-i} \leq 2q \leq \overline{s}_d $.
 We want to show 
 \begin{equation*}
     H_{d-i}(S/I)=\min\{ s_{d-i}, 3(r-q)+q \}=3r-2q,
 \end{equation*} 
 where the last equality follows from the assumption $3r-s_{d-i} \leq 2q$. 
 Towards this end we apply \Cref{C13}. Condition (2) in this lemma combined with $Y$ being $AH_2(d-i)$ and $AH_2(d-2i)$ becomes
 \begin{equation*}
    \begin{split}
     H_{d-i}(S/I_q)+q \leq H_{d-2i}(S/I_q)+H_{d-i}(\overline{S})\\
    \Leftrightarrow  3r-2q \leq \min \{s_{d-2i},3(r-q) \}+\overline{s}_{d-i}.
    \end{split}
\end{equation*}
If $H_{d-2i}(S/I_q)=s_{d-2i}$, then the right hand side is $s_{d-2i}+\overline{s}_{d-i}=s_{d-i}$ and the inequality $3r-2q\leq s_{d-i}$ is the assumption of this case. 
On the other hand, if $H_{d-2i}(S/I_q)=3(r-q)$, we need to show
\begin{equation*}     
3r-2q \leq 3(r-q)+\overline{s}_{d-i} \Leftrightarrow q\leq \overline{s}_{d-i}
\end{equation*} 
Assumptions of this case imply $q \leq \overline{s}_d/2$. \Cref{NumFact 2} illustrates
\begin{equation}\label{eq: NumFact 2}
    \overline{s}_d/2 \leq \overline{s}_{d-i},
\end{equation} which establishes the desired inequality $q\leq \overline{s}_{d-i}$.

Since condition (2) of \Cref{C13} holds, by the equivalent condition (1) of this result the ideal $I=I_q\cap J_q$ satisfies
 \begin{equation*} \label{eq: CHAND}    
 H_{d-i}(S/I)=H_{d-i}(S/I_q)+q.
 \end{equation*} 
By hypothesis $Y$ is $\AH_2(d-i)$ and further $3(r-q) < 3r-2q \leq s_{d-i}$. Hence the displayed identity leads to the desired conclusion
\begin{equation*}
    H_{d-i}(S/I_q)=3(r-q)=\min\{s_{d-i},3(r-q)\}.
\end{equation*}

\noindent \textbf{Case 2:} Suppose $\overline{s}_d \leq 2q \leq 3r-s_{d-i}$. 
We wish to demonstrate
\begin{equation*}
    H_{d-i}(S/I)=\min \{ s_{d-i},3r-2q\}=s_{d-i}.
\end{equation*}
 Since $Y$ is $AH_2(d-i)$ we have
\begin{equation*}
    H_{d-i}(S/I_{q})=\min \{s_{d-i},3(r-q) \}.
\end{equation*}
If $H_{d-i}(S/I_{q})=s_{d-i}$, we obtain 
\begin{equation*}
    [I_{q}]_{d-i}=0 \Rightarrow [I]_{d-i}= [I_{q}]_{d-i} \cap [J_{q}]_{d-i}=0 \Rightarrow H_{d-i}(S/I)=s_{d-i}.
\end{equation*}
Suppose $H_{d-i}(S/I_{q})=3(r-q)<s_{d-i}$. Set $m=s_{d-i}-3(r-q)$ and note $m\leq q$ by assumption of this case. Let $J_m$ be the defining ideal of $m$ among the chosen simple points on $L$, $J_{q-m}$ the ideal defining the remaining simple points, and $F_m=I_{q} \cap J_m$. It is enough to show $(F_m)_{d-i}=0$. Then
\begin{equation*}
    [I]_{d-i}=[I_{q}]_{d-i}\cap [J_{m}]_{d-i} \cap [J_{q-m}]_{d-i} =[F_m]_{d-i} \cap [J_{q-m}]_{d-i}=0
\end{equation*}
implies $H_{d-i}(S/I)=s_{d-i}$.  
Equivalently we want to show
\begin{equation} \label{eq: CHAND2}
    H_{d-i}(S/F_m)=H_{d-i}(S/I_{q})+m=s_{d-i}.
\end{equation}
 By \Cref{C13}, \Cref{eq: CHAND2} is true if and only if
\begin{equation}\label{ineqq1}
    s_{d-i} \leq H_{d-2i}(S/I_{q})+\overline{s}_{d-i}=\min \{ s_{d-2i},3(r-q)\}+\overline{s}_{d-i},
\end{equation}
where the last equality follows from $Y$ being $AH_2(d-2i)$. It suffices to show $q \leq \overline{s}_{d-i}$. If true, we use the assumption $s_{d-i} \leq 3r-2q$ to conclude $s_{d-2i}=s_{d-i}-\overline{s}_{d-i}\leq 3(r-q)$. Then the right hand side of \Cref{ineqq1} becomes $s_{d-i}$ and the inequality claimed therein holds as an equality. Therefore we have established \Cref{eq: CHAND2}, which settles the claim as explained above.
To verify the remaining inequality $q \leq \overline{s}_{d-i}$, observe that
\begin{equation*}
    q\leq r \leq \LC \frac{1}{3} s_d \RC \text{ and } s_{d-3} \leq s_{d-2}\leq s_{d-1}.
\end{equation*}
The inequality $\LC \frac{1}{3} s_d \RC \leq s_{d-3}$ follows from \cref{eq: Equivalence} and \Cref{NumFact 1}. 
\epf

\section{Exceptions}\label{section: exceptions}
\subsection{An interpolation bound for $\PP(1,b,c)$} 
Let $k$ be an algebraically closed field of characteristic $0$. The main goal of this section to give an interpolation bound for weighted projective planes of the form $\PP(1,b,c)$ in \Cref{Theorem: 1bc bound}. That is to find $t \in \N $ such that for all $d\geq t$ we can guarantee that any set of $r$ general double points $X$ is $\AH_2(d)$, or not exceptional, i.e.
 \begin{equation*}
     H_d(S/I_X)=\min \{ s_d, 3r\}.
 \end{equation*} 
In general double point interpolation in $\PP(1,b,c)$ is drastically different than in $\PP(1,2,3)$ and $\PP_k^2$ as indicated by \cref{example: deficiency}.

\bdf \label{def: deficiency}
For a set of $r$ general double points $X\subseteq \PP(a,b,c)$ we define {\it deficiency of $X$ in degree $d$} as 
\begin{equation*} \label{eq; Deficiency}
   D(X,d):= \min\{ s_d,3r \} - H_d(S/I_X)
\end{equation*}
\edf 

By \Cref{remark: Weight of 1} when at least one weight is $1$ we have $D(X,d) \geq 0$.  Further using this terminology, \Cref{thm: AH} states that $D(X,d)=0$ in $\PP_k^2$ with exceptions when $d=2,\; r=2$ and $d=4,\; r=5$ and deficiency is exactly $1$ in each case. Computations with Macaulay2 \cite{M2} illustrate even in $\PP(1,b,c)$ this is not the case. 

\bex \label{example: deficiency}
We demonstrate some examples of exceptional cases in $\PP(1,b,c)$.
\begin{table}[h!]
    \setlength\belowcaptionskip{-10pt}
    \centering
    \begin{tabular}{|c|c|l|}
    \hline
          Weighted $\PP^2$ & $\#$ of general points & {\centering Deficiency}  \\ \hline   
        $\PP(1,5,9)$ & 3 & $D(X,d)=1$ for $20\leq d \leq 22$\\ \hline
           $\PP(1,5,26)$& 2 & \vtop{\hbox{\strut $D(X,d)=2$ for $d=25$}\hbox{\strut $D(X,d)=1$ for $d=20-24,26-30$}} \\ \hline
        $\PP(1,4,57)$& 4 & \vtop{\hbox{\strut $D(X,d)=1$ for $d=32-35, 65-68$}\hbox{\strut $D(X,d)=2$ for $d=36-39, 61-64$}\hbox{\strut $D(X,d)=3$ for $d=40-43, 57-60$}\hbox{\strut $D(X,d)=4$ for $d=44-56$}}\\
        \hline
          \end{tabular}
    \caption{Exceptional Cases in $\PP(1,b,c)$}
    \label{tab:my_label}
\end{table}
\eex
\Cref{example: deficiency} shows that even when the ideal of points in a weighted projective plane are defined by complete intersections, their behavior can be different than points in the usual $\PP^2_k$. Our Macaulay2 experimentation suggests by increasing the difference between the two largest weights one can arbitrarily increase $D(X,d)$ for a suitable number of general double points.
\begin{problem}
    If $d\in \N$ is allowed to vary, is $D(X,d)$ bounded in any weighted projective plane?
\end{problem}

Proof of \Cref{Theorem: 1bc bound} mimics Terracini's proof of \Cref{thm: AH} for $n=2$; \cite{Terra1}. It relies on two two prelimenary results, \Cref{equiv} and \Cref{Terracini 2}. We introduce the necessary terminology to state these results.

\bdf\label{def: Veronese Map}
Let $S=k[x_0,\ldots, x_n]$ be the coordinate ring of $\PP(a_0,\ldots,a_n)$, $N=s_d-1$, $m_0,\ldots, m_N$ enumerate the monomials in $S_d$ and $I$ be the kernel of the $k$-algebra homomorphism
\begin{equation*}
\begin{split}
\theta: k[y_0,\ldots,y_N] &\to k[x_0,\ldots, x_n] \\
y_i &\to m_i.    
\end{split}
\end{equation*}
The projective variety
\[
V^d= \Proj(k[y_0,\ldots,y_N]/I)\subset \PP^N_k
\] is called the {\em $d$-th Veronese variety} of $\PP(a_0,\ldots,a_n)$.
Geometrically $\theta$ induces a map of projective varieties 
\begin{equation*}
\psi:\PP(a_0,\ldots,a_n)\dashrightarrow V^d, \, \psi(p)=[m_0(p):\cdots: m_N(p)]
\end{equation*}
 defined on $U = \{ p \in \Proj(S) \; | \; \theta((y_0,\ldots, y_N)) \not \subseteq p \}$.
 \
\edf
Note that unlike the straight projective space, the Veronese map is only partially defined on $\PP(a_0,\ldots,a_n)$. Henceforth the notion of a set of general points $X$ includes $X\subseteq U$.

By $\sigma_r(V^d)$ we denote the $r$-th secant variety to the image of the Veronese map.  For a point $p \in V^{d}$, we let $T_pV^{d}$ denote the {\em tangent space of $V^{d}$ at p}. For a precise definition of each see \cref{def: secant variety} and \cref{def: tangent space}.

\bthm \label{equiv}
For a fixed $d\geq a_n$ and a set of $r$ general double points $X\subseteq \PP(1,a_1,\ldots,a_n)$, the following are equivalent:
\begin{enumerate}
    \item ${H_{d}(S/I_X)=\min\{s_d, r(n+1)\}}$
    \item $\dim_k(\sigma_r(V^d))=\min\{s_d-1, r(n+1)-1\}$.
\end{enumerate}
\ethm

\bpf 
See \Cref{App: Secant Variety}.
\epf 

\blm {\bf (Second Terracini lemma)}   \label{Terracini 2}
Let $U$ be as in \cref{def: Veronese Map}.
Let $X=\{2p_i \}_{i=1}^r \subseteq U \subseteq \PP(a_0,\ldots,a_n)$ be a set of double points with defining ideal $I_X$ and fix $d \geq a_n$. We identify the reduced points $p_i$ with their images on $V^{d}$ according to the Veronese map. 

Assume that $H_{d}(S/I_X)<\min\{s_d, r(n+1)\}$. Then there is a positive dimensional variety $C\subseteq V^{d}$
 through $p_1,\ldots p_r$
such that if $p\in C$ then
$T_pV^{d}\subseteq <T_{p_1}V^{d},\ldots , T_{p_r}V^{d}>$. In particular, by \Cref{LASKERR}, every hypersurface of degree $d$ which is singular at $p_i$ is also singular along $C$.
\elm 
\bpf 
See \cite[Lemma 2.3]{BO08}
\epf 
We note the proof of \Cref{Terracini 2} in \cite{BO08} is completely independent of weights and holds in the setting of weighted projective space.
\bthm \label{Theorem: 1bc bound}
A set of $r$ general double points $X\subseteq \PP(1,b,c)$ is $\AH_2(d)$,
\begin{equation}\label{eq: 1bc bound}
    H_{d}(S/I_X)=\min\{s_d,3r \},
\end{equation}
for all $d\geq 10c$. 
Additionally, \Cref{eq: 1bc bound} holds if $d \geq 6c$ and $\LF \frac{2c}{b} \RF \geq 5$.
\ethm
\bpf 
Let $X$ be a set of $r$ general double that does not satisfy \Cref{eq: 1bc bound} for some degree $d \geq c$. There exists $F \in (I_X)_d$ where $F \neq 0$. 
By \Cref{Terracini 2}, there is a positive dimensional variety $C \subseteq V^d$ through the image of the points in $V^d$. By abuse of notation we now identify $C$ with its preimage under the Veronese map. By the last part of \Cref{Terracini 2}, $F$ is singular along $C$. Hence $F$ contains a double component through $X$,  namely $F=C^2H$. Let $\deg(C)=l$.
Since one of the weights is $1$, \Cref{remark: Weight of 1}~(3) implies
\begin{equation} \label{eq: 1bc 1}
    2l \leq d \Rightarrow l \leq \left \lfloor \frac{d}{2} \right \rfloor \Rightarrow s_l \leq s_{\left \lfloor \frac{d}{2} \right \rfloor}.
\end{equation}
By \Cref{Terracini 2}, $C$ vanishes on the reduced set $\{p_i \}_{i=1}^r$. From \Cref{prop: simple point interpolation} and \Cref{rmk: specilization} we get
\begin{equation} \label{eq: 1bc 2}
   \left \lfloor \frac{1}{3}s_d \right \rfloor \leq r\leq s_l -1.
\end{equation}
Putting \Cref{eq: 1bc 1} and \Cref{eq: 1bc 2} together we acquire
\begin{equation} \label{eq: 1bc 3}
    \left \lfloor \frac{1}{3}s_d \right \rfloor \leq s_{\left \lfloor \frac{d}{2} \right \rfloor} -1.
\end{equation}
Hence for values of $d$ where \Cref{eq: 1bc 3} does not hold, any set $X$ satisfies \Cref{eq: 1bc bound}. In \Cref{lemma: 9c+1} we show that the inequalities involving $d$ given in the hypothesis force \Cref{eq: 1bc 3} to fail. This completes the proof.
\epf 

\blm \label{lemma: 9c+1}
If either of the following is true then $\left\lfloor \frac{1}{3} s_d\right \rfloor \geq s_{\LF \frac{d}{2} \RF}$ holds.
\begin{enumerate}
    \item$ d \geq 10c$
    \item $d \geq 6c$ and $\LF \frac{2c}{b}\RF \geq 5$
\end{enumerate}

\elm 

\bpf 
We employ an interpretation of the Hilbert function of a nonstandard graded polynomial ring as lattice point counting. 

Observe $s_d$ is the number of lattice points on and inside the triangle $T$ with vertices $(0,0), \left( 0, \frac{d}{c}\right), \left( \frac{d}{b},0\right)$. Recall $s_d$ is the number solutions to $r_0+br_1+cr_2=d$ where $r_i \in \N_0$. Which are in bijection with lattice points $(r_1,r_2)$ where $br_1+cr_2 \leq d$, $r_1\geq 0$ and $r_2\geq 0$. The latter is the number of lattice points on and inside of $T$ which we now divide into smaller triangles.
\begin{center} 
\begin{tikzpicture}
\draw (0,0) node[anchor=north]{}
  -- (7,0) node[anchor=west]{$\frac{d}{b}$}
  -- (0,5) node[anchor=south,left]{$\frac{d}{c}$}
  -- cycle;
\fill (3.2,0) circle (2pt) node[below] {\small $\frac{\LF \frac{d}{2} \RF}{b}$};
\fill (2.5,0) circle (2pt) node[below] {\tiny $\LF \frac{d}{2b} \RF$};
\fill (5.7,0) circle (2pt) node[below] {\small $\frac{\LF \frac{d}{2} \RF}{b}+\LF \frac{d}{2b} \RF$};

\fill (0,1.7) circle (2pt) node[left] {\tiny $\LF \frac{d}{2c} \RF$};
\fill (0,2.3) circle (2pt) node[left] {\small $\frac{\LF \frac{d}{2} \RF}{c}$};
\fill (0,4) circle (2pt) node[left] {\small $\frac{\LF \frac{d}{2} \RF}{c}+\LF \frac{d}{2c} \RF$};

\fill (2.5,1.7) circle (2pt) node[below] {};
(3.2,1.7) circle (2pt) node[below] {};
 (2.5,2.3) circle (2pt) node[] {};

\draw (3.2,0)--(0,2.3);
\draw (2.5,0) -- (2.5,2.3) -- (5.7,0);
\draw (0,4) -- (3.2,1.7)--(0,1.7);
\draw (2.5,1.7) -- (0,1.7);

\draw (.8,.5) node{$T_1$};
\draw (3.8,.5) node{$T_2$};
\draw (1,2.7) node{$T_3$};
\draw (2.2,1.2) node{$T_4$};
\draw (2.7,.2) node{$t_5$};
\end{tikzpicture}
\end{center}
Triangle $T_1$ has vertices at
$
    (0,0),\left (0,\frac{\LF \frac{d}{2}\RF}{c}\right ),\left (\frac{\LF \frac{d}{2}\RF}{b},0\right ) 
$.
Our goal is to impose conditions on $b$ and $c$ so that $T_1$, $T_2$ and $T_3$ have small intersections and $T_4$ contains enough lattice points to imply the desired inequality. By $\#T$ we refer to the number of lattice points on  and inside the triangle $T$.
Note that
\begin{itemize}
    \item $\#T_1=s_{\LF \frac{d}{2} \RF}$,
    \item $T_2$ and $T_3$ are translates of $T_1$ by integer vectors hence     $$\#T_2 = \#T_3= \#T_1=s_{\LF \frac{d}{2} \RF}.$$
\end{itemize}
We make the following observations
\begin{enumerate}
    \item $T_1 \cap T_2 =\left  \{ \left (\LF
    \frac{d}{2b}\RF,0 \right ) \right \}$.
    \item $T_2 \cap T_3 = \left\{\left (\LF \frac{d}{2b} \RF , \LF \frac{d}{2c} \RF \right) \right \}$
    \item $T_1 \cap T_3 \subseteq \left\{ \left(t, \LF \frac{d}{2c} \RF \right)\; | \: 0\leq t < \frac{c}{b} \right\}$
    \item $T_1\cap T_2 \cap T_3=\emptyset$.
\end{enumerate}
Item $(4)$ follows from $(2)$ once we check the point $\left(\LF \frac{d}{2b} \RF , \LF \frac{d}{2c} \RF \right)$ is not in $T_1$. The slope of the hypotenuse of $T_1$ is $\frac{-b}{c}$, so the point is in $T_1$ if and only if $\LF \frac{d}{2c}\RF \leq  \frac{b}{c}   < 1$. But this can't happen since $d\geq 2c$ implies $\LF\frac{d}{2c} \RF \geq 1$.

Using $(1)-(4)$ above and denoting by $\#  \accentset{\circ}{T}_4$ the number of lattice points in the interior of $T_4$, we obtain
\begin{align}\label{eq: 1bc 6}
    s_d &=\#T  \nonumber \\
    &\geq   \#T_1+\#T_2 +\#T_3- \#(T_1 \cap T_2) -\#(T_2 \cap T_3) -\#(T_1 \cap T_3)+ \#\accentset{\circ}{T}_4  \nonumber \\   
    & \geq  3s_{\LF \frac{d}{2} \RF}-3-\LF \frac{c}{b} \RF + \#\accentset{\circ}{T}_4. 
\end{align}
We now estimate $\#\accentset{\circ}{T}_4$. Let $(p,q)$ be the point of intersection between the hypotenuse of $T_1$ and the line $x=\LF \frac{d}{2b}\RF$. Using the figure above one can show 
\begin{equation} \label{eq: T4 interior}
   q < i < \LF \frac{d}{2c}\RF \text{ and } 
   \frac{\LF \frac{d}{2} \RF}{b}-\frac{ic}{b} < t < \LF \frac{d}{2b} \RF \iff  (t,i) \in \accentset{\circ}{T}_4   
\end{equation}
For a fixed $i$, we obtain $\LF\frac{ic}{b}  \RF-1 \geq i-1$ lattice points in $\#\accentset{\circ}{T}_4$.
Let $d \geq 10c$. From $\LF \frac{d}{2c} \RF \geq 5$ and \Cref{eq: T4 interior} we get $i\leq 4$.  Since  $t_5$ is similar to $T$ and $\frac{\LF \frac{d}{2} \RF}{b} - \LF \frac{d}{2b} \RF \leq 1$ we get $q< 1$, hence $1\leq i$. Plugging $1\leq i \leq 4$ in \Cref{eq: T4 interior} one gets
\begin{equation*}
    \#\accentset{\circ}{T}_4 \geq \LF \frac{c}{b} \RF +5.
\end{equation*}
Plugging this into \Cref{eq: 1bc 6} yields the following and proves $(1)$.
\begin{equation*}
    s_d \geq 3s_{\LF \frac{d}{2} \RF} - \LF \frac{c}{b} \RF -3 + \LF \frac{c}{b} \RF +5= 3s_{\LF \frac{d}{2} \RF}+2.
\end{equation*}
Suppose $d \geq 6c$ and $\LF \frac{2c}{b} \RF \geq 5$. By $\LF \frac{d}{2c} \RF \geq 3 $ and \Cref{eq: T4 interior} we get 
\begin{equation*}
    \#\accentset{\circ}{T}_4 \geq \LF \frac{c}{b} \RF -1 + \LF \frac{2c}{b} \RF -1.
\end{equation*}
By $\LF \frac{2c}{b} \RF \geq 5 $ and \Cref{eq: 1bc 6}  we arrive at the desired conclusion
\begin{equation*}
    s_d \geq 3s_{\LF \frac{d}{2} \RF} - \LF \frac{c}{b} \RF -3 + \LF \frac{c}{b}\RF -1 + \LF \frac{2c}{b} \RF-1 \geq 3 s_{\LF \frac{d}{2} \RF}.  \qedhere
\end{equation*}
\epf 

\subsection{Exceptions in $\PP(1,b,c)$.}
So far we have shown exceptional sets of points in $\PP(1,b,c)$ only happen when $d < 10c$. As a porism of \Cref{Theorem: 1bc bound} we get \cref{Cor: Exceptions 3|s_d} which fully classifies exceptional cases of $\PP(1,b,c)$ in degrees $d$ where $s_d$ is divisible by $3$.
\Cref{prop: suff exception} gives a sufficient condition for a set of points to fail \Cref{eq: 1bc bound}.
\bpr \label{prop: suff exception}
Fix $d \in \N$. Let $X \subseteq \PP(1,a_1,\ldots,a_n)$ be a set of $r$ general simple points. Let $J_X$ and $I_X$ be the defining ideal of $X$ and $2X$ respectively; with $2X$ denoting the set of double points supported on $X$. If $r < s_{\LF \frac{d}{2} \RF}$ and $(n+1)r \geq s_d$ then $X$ is not $\AH_n(d)$.
\epr 
\bpf 
Let $e=\LF \frac{d}{2} \RF$. By \Cref{prop: simple point interpolation} and the assumption $r <s_e$, there exists $F \in (J_X)_{e}$ where $F \neq 0$.
The proof follows from the following simple observation.
\begin{equation*}
    F^2 \in (J^2_X)_{2e} \subseteq (I_X)_{2e}
\end{equation*}
However the expected dimension of $(I_X)_{2e}=0$ because
\begin{equation*}
    (n+1)\cdot r \geq s_d \underset{*}{\geq} s_{2e},
\end{equation*}
where $(*)$ follows since $a_0=1$ (\cref{remark: Weight of 1}~(3)). Thus $X$ is not AH$_n(d)$.
\epf 
\bcor \label{Cor: Exceptions 3|s_d}
Let $d \in \N$ be such that $s_d=3 r$. Then a set $X \subseteq \PP(1,b,c)$ of $r$ general double points is not $\AH_2(d)$ if and only if $r < s_{\LF \frac{d}{2} \RF}$.
\ecor
\bpf 
Let $X$ be as defined in the hypothesis. Using \Cref{prop: suff exception} for $(\Rightarrow)$ and \Cref{eq: 1bc 3} for $(\Leftarrow)$, which is a consequence of not being $\AH_2(d)$ as shown in the proof of \Cref{Theorem: 1bc bound}, we get
\begin{equation*}
\begin{split}    
    &r= \frac{1}{3} s_d  \leq s_{\LF \frac{d}{2} \RF}-1 < s_{\LF \frac{d}{2} \RF}\\
    &\iff \text{ X does does not impose independent conditions in degree $d$.} \qedhere
\end{split}
\end{equation*}
\epf 

Using \Cref{Cor: Exceptions 3|s_d} one can prove $\PP(1,2,3)$ is the only space of the form $\PP(1,b,c)$ with no exceptional sets of general double points. 
\bthm \label{thm: 123 ONLY}
Let $X \subseteq \PP(1,b,c)$ be a finite set of general double points. Then $X$ is $\AH_2(d)$ for all $d \in \N$ if and only if $b=2$ and $c=3$.
\ethm
\bpf 
$(\Rightarrow)$ The proof is broken into the cases $b=1$, $b=2$ and $b\geq 3$ where $r$ denotes the number of points in $X$. 
If $b=1$, $c=2$ and $r=3$ then $X$ is exceptional because $s_2=4$ and by \cref{prop: simple point interpolation} there exists a quadratic form $F$ through the $3$ points resulting in $F^2\in [I_X]_4$. But the fact that $s_4=9$ yields
\begin{equation*}
    H_{S/I_X}(4)\leq 8<9=\min\{9,9 \}. 
\end{equation*}
If $c\geq 3$ by \Cref{NECK} one general double point is exceptional.

For $b=2$ and $r=1$, \Cref{1POINT} implies $c<4$ and further $2<c$ by \Cref{rmk: well-formed}, therefore $c=3$ and we are done.
Let $b\geq 3$. We contradict the hypothesis by proving it can not hold for $r=1,2,3$ points and all $d$ simultaneously. When $r=2$, \Cref{2POINTS}, which uses the assumption $b\neq 2$, implies $\frac{3}{2}b<c<2b$ or $2b<c<3b$. By \Cref{1POINT} the latter would contradict $1$ point being $\AH_2(d)$ for all $d$ therefore $\frac{3}{2}b<c<2b$. This criteria on $c$ results in $s_{4b}=9$  and $s_{2b}=4$ thus \Cref{Cor: Exceptions 3|s_d} implies $3$ points are not $\AH_2(d)$. \Cref{AH123} yields the backwards direction.
\epf 
\section{Open problems} \label{section: Open Problems}
We end this paper with a list of open problems, some of which have been stated through out the paper. 
\begin{enumerate}
\item With possible restrictions on the weights, find explicit generators for the defining ideal of a general point in $\PP(a_0,\ldots,a_n)$ for $n\geq 3$.
\item Using the notation introduced in \Cref{section: exceptions},  is $D(X,d)$ bounded {\em uniformly} (independently of $d$) in any weighted projective plane?
\item Let $\PP(1,a_1,\ldots,a_n)$ and $g=\lcm(a_1,\ldots,a_n)$. One can show if $g\mid d$ then the Veronese map in \Cref{def: Veronese Map} becomes an embedding. Compute the minimal free resolution of the ideal defining the Veronese variety $V^d$ for such $d$.
\item The well-known SHGH conjecture, see \cite{IdealsofPowers}, states the exceptional degrees for a set $X \subseteq \PP^2$ where $X$ contains points with mixed multiplicity. State the analogue of this conjecture for $\PP(1,b,c)$.
\item We suspect that m\'{e}thode d'Horace is applicable in $\PP(1,a_1,\ldots,a_n)$ for $d\geq \lcm(a_1,\ldots,a_n)$. But we could not adapt the details of the modern proof due to Chandler, in particular the use curvilinear subschemes, in the setting of weighted projective space. We leave it as a conjecture that m\'{e}thode d'Horace, or a simple modification of it, applies in weighted projective space.
\end{enumerate}
\appendix

\section{Proofs related to \Cref{123}} \label{Appendix: 123}


This section contains proofs for several foundational results used in \Cref{123} adapted from \cite{HM21} and \cite{Chandler01} as well as proofs for some numerical inequalities we employ in that section. We use \Cref{not: s_d}. Our proof of \Cref{TERA} mimics the one presented in \cite{HM21}; with expansion on two details that were not clear to us. The proof of \Cref{lem: Chandler} gives an accessible proof of \cite[Lemma 3]{Chandler01} when all the weights are $1$.

Let $\mathfrak{m}$ be the homogeneous maximal ideal of a graded ring and let $I$ be any ideal. The ideal $(I:\mathfrak{m}^{\infty})$ is called {\em the saturation of $I$}, denoted $I^{\sat}$.
\blm [Generalized Castelnuovo's Inequality] \label{GCI}
Let $S$ be a not necessarily standard graded polynomial ring, $I$ a homogeneous ideal and $l$ a form of degree $k$ in $S$. Set $\tilde I =I: l$, $\overline{S}=S/(l)$ and $\overline{I}=(I+(l))/(l)$, the ideal of $I$ in $\overline{S}$. Then 
\[
H_{d}(S/I)\geq H_{d-k}(S/\tilde I)+H_{d}(\overline{S}/(\overline{I})^{\sat}),
\]
with equality if and only if $\overline{I}$ is saturated in $\overline{R}$.
\elm 

\bpf
See \cite[Lemma 2.2]{HM21}.
\epf 
We note the assumption on the ambient space in \Cref{TERA} allows us to conclude the defining ideal of a simple point is a complete intersection (\Cref{POINTS ARE CI}, \Cref{defidplane}) which is implicitly used through out the proof. 
For a set of reduced points $X$ with defining ideal $J_X$, let $2X$ denote the set of double points supported on $X$ where $I_X=J\symb_X$; see \Cref{def: symb Power Def}.

\begin{theoremTERA}[Generalized Terracini's Inductive argument]
\ \\
Consider $\PP=\PP(1,a_1,\ldots,a_n)$, with the coordinate ring $S=k[x_0,\ldots,x_n]$ where $\deg(x_i)=a_i$. Assume $1\leq q \leq r$ and $d\in \N$ satisfy one of the following inequalities for at least one $i$ in the range $0\leq i\leq n$,
\begin{equation} \label{eq: TERRACINI CONDITION2} 
    (n+1)r-s_{d-a_i} \leq nq \leq \overline{s}_d \qquad \text{ or } \qquad  \overline{s}_d \leq nq \leq (n+1)r-s_{d-a_i},
\end{equation}
where $\overline{S}=S/(x_i)$ is the coordinate ring of the hyperplane $L=V(x_i)\cong \PP(a_0,\ldots,\hat{a_i},\ldots,a_n)$ and $\overline{s}_d=\dim_k(\overline{S}_d)$.
If
\begin{enumerate} 
    \item a set of $q$ general double points in $L$ is $\AH_{n-1}(d)$ and
    \item the union of a set of $r-q$ general double points in $\PP$ and a set of $q$ general simple points in $L$ is $\AH_n(d-a_i)$
\end{enumerate}
then a set of $r$ general double points in $\PP$ is $\AH_n(d)$.
\end{theoremTERA}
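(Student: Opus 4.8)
The plan is to run the classical Horace/Terracini specialization and then invoke lower semicontinuity of the Hilbert function. Since $H_d(S/I_X)\le \min\{s_d,(n+1)r\}$ always holds (\Cref{rmk: minimum}) and $H_d(S/I_X)$ equals the rank of the evaluation map $\ev_X$, which is lower semicontinuous in the coordinates of the points, the function $X\mapsto H_d(S/I_X)$ is lower semicontinuous on the parameter space of $r$-tuples of points. Hence it suffices to exhibit a single, possibly special, configuration $X_0$ of $r$ double points with $H_d(S/I_{X_0})=\min\{s_d,(n+1)r\}$: the locus where this top value is attained is then a nonempty open set, which the general configuration meets. I would take $X_0$ to be the union of $r-q$ double points general in $\PP$ together with $q$ double points general in the hyperplane $L=V(x_i)$, where $i$ is an index for which \Cref{eq: TERRACINI CONDITION2} holds.

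The heart of the argument is the trace/residual decomposition obtained by applying \Cref{GCI} with the form $l=x_i$ of degree $a_i$, which gives
\[
H_d(S/I_{X_0})\ \ge\ H_{d-a_i}\!\big(S/(I_{X_0}:x_i)\big)+H_d\!\big(\overline S/(\overline{I_{X_0}})^{\sat}\big).
\]
I would then identify the two summands geometrically. Because $a_0=1$ forces the ideal of each simple point to be a complete intersection (\Cref{POINTS ARE CI}, \Cref{defidplane}), we have $I_p^{(2)}=I_p^2$ (\Cref{def: symb Power Def}), and the local computations $\mathfrak m^2:t_i=\mathfrak m$ and $\mathfrak m^2+(t_i)\equiv \overline{\mathfrak m}{}^2$ at a point of $L$ show that the residual $I_{X_0}:x_i$ defines the $r-q$ original double points together with $q$ \emph{simple} points at the supports on $L$, while the saturated trace $(\overline{I_{X_0}})^{\sat}$ defines $q$ double points inside $L\cong\PP(a_0,\dots,\hat a_i,\dots,a_n)$. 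Consequently hypothesis (2) gives $H_{d-a_i}(S/(I_{X_0}:x_i))=\min\{s_{d-a_i},(n+1)r-nq\}$, using $(n+1)(r-q)+q=(n+1)r-nq$, and hypothesis (1) gives $H_d(\overline S/(\overline{I_{X_0}})^{\sat})=\min\{\overline s_d,nq\}$.

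It then remains to run the numerics. Substituting the two values yields
\[
H_d(S/I_{X_0})\ \ge\ \min\{s_{d-a_i},(n+1)r-nq\}+\min\{\overline s_d,nq\},
\]
and I would use $s_d=s_{d-a_i}+\overline s_d$, which is \Cref{equation: HF-NZD} applied to the nonzerodivisor $x_i$ on $S$. In the first case of \Cref{eq: TERRACINI CONDITION2}, the inequalities $nq\le\overline s_d$ and $(n+1)r-s_{d-a_i}\le nq$ collapse the right-hand side to $(n+1)r$ and simultaneously give $(n+1)r\le s_{d-a_i}+\overline s_d=s_d$, so $\min\{s_d,(n+1)r\}=(n+1)r$; in the second case the roles reverse, the bound becomes $s_{d-a_i}+\overline s_d=s_d$, and $\overline s_d\le(n+1)r-s_{d-a_i}$ yields $s_d\le(n+1)r$. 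Either way $H_d(S/I_{X_0})\ge \min\{s_d,(n+1)r\}$, and combined with the universal upper bound this is an equality, completing the reduction.

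I expect the main obstacle to be the two geometric identifications underlying the displayed Castelnuovo bound — that the residual of a specialized double point is exactly a simple point and that its trace on $L$ is exactly a double point of $L$ — carried out honestly in the weighted, non-reduced setting, where one must verify that $x_i$ cuts $L$ transversally at each general support and that the colon and restriction operations commute with the intersection $\bigcap_i I_{p_i}^{2}$. This is precisely where the hypothesis $a_0=1$ enters, guaranteeing $I_p^{(2)}=I_p^2$ and the complete-intersection structure needed for these local computations. A secondary technical point is confirming that the specialization $X_0$ is a genuine degeneration of general configurations, so that the semicontinuity conclusion applies over the weighted parameter space.
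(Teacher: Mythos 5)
Your proposal follows essentially the same route as the paper's own proof: the same specialization ($r-q$ general double points plus $q$ double points placed on $L=V(x_i)$), the same application of \Cref{GCI} with $l=x_i$, the same identifications of the residual $(I_{X_0}:x_i)$ as $r-q$ double points plus $q$ simple points and of the saturated trace as $q$ double points in $L$, and the same case analysis on \Cref{eq: TERRACINI CONDITION2} via $s_d=s_{d-a_i}+\overline{s}_d$. The technical steps you flag as the main obstacle are exactly the ones the paper carries out in detail --- the colon computation $(J_{Y_1}^{(2)}:x_i)\subseteq J_{Y_1}$ via the associated graded ring being a domain (using the complete intersection structure afforded by $a_0=1$), and the saturation claim $(\overline{I})^{\sat}=\overline{J_{Y_1}}^{(2)}$ via localization at minimal primes --- so your outline is correct and faithful to the paper's argument.
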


\bpf
If $n\geq 3$  all general points in $\PP$ are considered in $U=\displaystyle \bigcup_{\{ j\; \mid\;  a_j=1\} } U_j$.
Let $Y_1$ be a set of $q$ general simple points in $L$, with $\overline{J_{Y_1}} \subseteq \overline{S}$ as its defining ideal. Note $Y_1 \subseteq \PP$ has the defining ideal $J_{Y_1}=(\overline{J_{Y_1}},x_i)S$. Let $Y_2$ be a set of $r-q$ general simple points in $\PP\bs L$ with defining ideal $J_{Y_2}\subseteq S$.  Let $I=J_{Y_1}^{(2)}\cap J_{Y_2}^{(2)}$ be the defining ideal of $2Y=2Y_1 \cup 2Y_2$ and $\overline{I}=\cfrac{I+x_i}{x_i}$. We show $2Y$ has the $\AH_2(d)$ property. 

For any $d$, the definition of Hilbert function and \Cref{def: EvaluationmMap} imply $H_d(S/I) \leq s_d$ and $H_d(S/I)\leq r(n+1)$ hence it suffices to show $H_d(S/I)\geq \min \{ s_d,r(n+1)\} $. Since $Y_1 \subseteq L$ and $Y_2 \subseteq \PP\bs L$ we get
\begin{equation*}\tilde I := (I:x_i)= (J^{(2)}_{Y_1} \cap J^{(2)}_{Y_2}:x_i)= (J^{(2)}_{Y_1}:x_i) \cap (J^{(2)}_{Y_2}:x_i)=J_{Y_1} \cap J^{(2)}_{Y_2}.    
\end{equation*}
To justify the final equality we will show $(J^{(2)}_{Y_1}:x_i) \subseteq J_{Y_1}$. Observe \begin{equation*}
s \in (J^{(2)}_{Y_1}:x_i)=(\displaystyle \bigcap_{j=1}^q J^{(2)}_{p_j}:x_i)=\bigcap_{j=k}^q (J^{2}_{p_j}:x_i) \Rightarrow s\cdot x_i \in     \bigcap_{i=k}^q J^{2}_{p_j}.
\end{equation*} We want to show $s \in J_{p_j}$ for all $j$.

Fix a $j$ and note $s\cdot x_i=0$ in $\gr_{J_{p_j}}S$, where $s \in S/J_{p_j}$ and $x_i \in J_{p_j}/J^2_{p_j}$, but clearly $x_i \not \in J^2_{p_j}$. Using the generators of $J_{p_j}$ from \Cref{POINTS ARE CI} or \Cref{defidplane}, we can show $S_{J_{p_j}}$ is a regular ring, hence $\gr_{J_{P_j}}S$ is a domain. Therefore $s \in J_{p_j}$ for all $j$.
Note $\tilde I$ defines a set of $q$ general simple points in $L$ and $r-q$ general double points in $\PP$.\\
Claim: $(\overline{I})^{\sat}={\overline{J_{Y_1}}}^{(2)}$. Observe that $\overline{I} \subseteq \overline{J_{Y_1}}$. Since $\overline{S}/\overline{J_{Y_1}}$ is a one-dimensional ring, $(\overline{I})^{(\sat)}$ is the intersection of minimal components of $\overline{I}$. These are minimal primes containing $I=J_{Y_1}^{(2)}\cap J_{Y_2}^{(2)}$ and $x_i$. Since no minimal prime of $J_{Y_2}$ contains $x_i$, we get $\Min\{ (I,x_i) \}=\Min \{ J_{Y_1}\}$. For any $\fp \in \Min\{ (I,x_i) \}$ we get 
\begin{equation*}
\begin{split}
(I,x_i)^{\sat}_\fp=(I,x_i)_\fp= (J_{Y_1}^{(2)}\cap J_{Y_2}^{(2)},x_i)_\fp &= (J_{Y_1}^{(2)},x_i)_\fp\cap (J_{Y_2}^{(2)},x_i)_\fp\\
&=(J_{Y_1}^{(2)},x_i)_\fp=(J_{Y_1}^{(2)},x_i)^{\sat}_\fp.       
\end{split}
\end{equation*}
This proves the claim. By \Cref{GCI} we get
\begin{equation}\label{TERRAG}
H_{d}(S/I) \geq H_{d-a_i}(S/\tilde I) + H_{d} (\overline{S}/\overline{I}^{\sat}) = \min\{s_{d-a_i},(n+1)(r-q)+q \} + \min \{\overline{s}_d,nq \}     
\end{equation}
where the last equality follows from assumptions $(1)$ and $(2)$.

If $(n+1)r-s_{d-a_i} \leq nq \leq \overline{s}_d$ holds, the right hand side of \Cref{TERRAG} becomes $(n+1)r$. If $ \overline{s}_d \leq nq \leq (n+1)r-s_{d-a_i}$ holds, the right hand side of \Cref{TERRAG} is $s_{d-a_i}+\overline{s}_d=s_d$ by \cref{rmk: SES-NZD}. Note if $H_{d}(S/I)$ is equal to $(n+1)r$ or $s_d$ then that value is the minimum of the two.
\epf

\begin{LemmaChandler}[Chandler's Lemma] 
Let $I$ be a saturated homogeneous ideal in $S=k[x_0,\ldots,x_n]$ with $\deg(x_i)=a_i$, the homogeneous coordinate ring of $\PP(a_0,\ldots,a_n)$. Assume $l$ is regular on $S/I$. Let $\overline{S}=S/(l)$ and $\deg(l)=i$. Fix $d \in Z_+$. Then following are equivalent:
\begin{enumerate}
    \item A general set $Y_0$ of $q$ reduced points in $V(l)$ satisfies
    \begin{equation}\label{eq: q points}
        H_{d}(S/(I \cap J_{Y_0}))=H_{d}(S/I)+q. 
    \end{equation}
    \item $H_{d}(S/I)+q \leq H_{d-i}(S/I)+H_{d}(\overline{S})$.
\end{enumerate}
\end{LemmaChandler}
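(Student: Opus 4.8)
The plan is to translate both conditions into statements about a single linear system---the restriction of $I_d$ to the hyperplane $L=V(l)$---and then compare them by a genericity count. Throughout write $\overline I=(I+(l))/(l)\subseteq\overline S$ for the image of $I$, and let $\overline I_d$ denote its degree-$d$ piece, i.e.\ the image of $I_d$ under the restriction map $S_d\to\overline S_d$. First I would simplify condition (2). Since $l$ is regular on $S/I$ and $\deg l=i$, the exact sequence $0\to(S/I)(-i)\xrightarrow{\cdot l}S/I\to\overline S/\overline I\to 0$ gives, exactly as in \Cref{equation: HF-NZD},
\begin{equation*}
H_d(S/I)=H_{d-i}(S/I)+H_d(\overline S/\overline I).
\end{equation*}
Substituting this into (2) and cancelling $H_{d-i}(S/I)$ shows that (2) is equivalent to $H_d(\overline S/\overline I)+q\le H_d(\overline S)$, that is,
\begin{equation*}
(2)\iff q\le H_d(\overline S)-H_d(\overline S/\overline I)=\dim_k\overline I_d.
\end{equation*}

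Next I would reinterpret condition (1). From the short exact sequence $0\to I/(I\cap J_{Y_0})\to S/(I\cap J_{Y_0})\to S/I\to 0$ together with the isomorphism $I/(I\cap J_{Y_0})\cong (I+J_{Y_0})/J_{Y_0}$ (the image of $I$ in $S/J_{Y_0}$), taking degree-$d$ dimensions yields
\begin{equation*}
H_d(S/(I\cap J_{Y_0}))=H_d(S/I)+\dim_k\bigl(\text{image of }I_d\text{ in }(S/J_{Y_0})_d\bigr).
\end{equation*}
As $(S/J_{Y_0})_d$ has dimension at most $q$, this already gives $H_d(S/(I\cap J_{Y_0}))\le H_d(S/I)+q$, so (1) asserts that this maximum is attained. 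Because $Y_0\subseteq V(l)$ we have $l\in J_{Y_0}$, hence $S/J_{Y_0}=\overline S/\overline{J_{Y_0}}$ and the displayed image factors through $\overline S_d$, coinciding with the image of $\overline I_d$ under evaluation at $Y_0$. Thus condition (1) is equivalent to the statement that the $q$ points of $Y_0$ impose exactly $q$ independent conditions on the linear system $\overline I_d$ on $L$, i.e.\ that this image has dimension $q$.

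Finally I would carry out the genericity count, which is the crux. I claim that $q$ general points of $L$ impose $\min(q,\dim_k\overline I_d)$ independent conditions on $\overline I_d$; granting this, the image in (1) has dimension $\min(q,\dim_k\overline I_d)$, which equals $q$ if and only if $q\le\dim_k\overline I_d$, and that is precisely condition (2). To prove the claim I would induct on $q$: since $\overline S$ is a domain, any nonzero $\bar f\in\overline S_d$ has $V(\bar f)\subsetneq L$, so a general point $p\in L$ satisfies $\bar f(p)\neq 0$; hence as long as the subspace of $\overline I_d$ vanishing at the points already chosen is nonzero, one further general point drops its dimension by exactly one, while once that subspace is zero the system $\overline I_d$ has been exhausted. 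This is the same mechanism as in the proof of \Cref{prop: simple point interpolation} (the special case $\overline I_d=\overline S_d$), and it reduces to choosing the points in the complement of finitely many proper closed subsets of the irreducible variety $L$.

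The hard part will be making this independence count fully rigorous in the weighted setting. Over $\PP(a_0,\dots,\widehat{a_i},\dots,a_n)$ the evaluation $\bar f\mapsto\bar f(p)$ is only defined up to the $k^\times$-action, the Veronese map is merely partially defined, and one must verify that the relevant open loci---where the successive points avoid the base locus of $\overline I_d$ and each imposes a new condition---are genuinely nonempty, which requires $k$ infinite. Treating the base locus of $\overline I_d$ directly, rather than assuming the system is base-point free, is the delicate point. Here the standing hypotheses enter: $l$ regular on $S/I$ is used directly in the exact sequence above (and guarantees no associated component of $V(I)$ is hidden inside $L$), while saturation of $I$ ensures that $\overline I$ faithfully records the restriction of $I$ to $L$, so that the Hilbert-function count matches the geometric count of conditions.
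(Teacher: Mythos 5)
Your argument is correct in the setting where the lemma is actually used, but it is organized differently from the paper's proof, and the comparison is instructive. The paper proves $(1)\Rightarrow(2)$ by the colon computation $(I\cap J_{Y_0}:l)=I$ followed by the restriction sequence for $I\cap J_{Y_0}$, and proves $(2)\Rightarrow(1)$ by building the set one point at a time: the numerical hypothesis produces a form $F\in[I\cap J_{Y(j)}+(l)]_d\setminus[(l)]_d$, the next point is chosen off $V(F)$, the increment of the Hilbert function is pinned down by a two-sided estimate (lower bound from $F$, upper bound from the sequence $0\to S/(I\cap J_{Y(j+1)})\to S/I\oplus S/J_{Y(j+1)}\to S/(I+J_{Y(j+1)})\to 0$), and semicontinuity upgrades ``some set'' to ``a general set.'' You instead translate both conditions into statements about the single linear system $\overline I_d\subseteq\overline S_d$: condition (2) becomes $q\le\dim_k\overline I_d$, condition (1) becomes ``$Y_0$ imposes $q$ independent conditions on $\overline I_d$,'' and the lemma reduces to the fact that general points of an irreducible variety impose $\min(q,\dim_k\overline I_d)$ conditions on a linear system. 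This buys real transparency: $(1)\Rightarrow(2)$ becomes a triviality (an image can never exceed $\dim_k\overline I_d$), and the paper's Mayer--Vietoris-type upper bound is absorbed into your preliminary reduction. The engine of $(2)\Rightarrow(1)$ --- pick a surviving section, place the next point off its zero locus --- is the same as the paper's, just run inside $\overline S_d$ rather than on ideals of $S$.

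Two caveats, both of which the paper's own proof shares in some form, so they are not disqualifying, but you should state them. First, you assert that $\overline S=S/(l)$ is a domain; the hypotheses only say $l$ is a form regular on $S/I$, so this is an added assumption. You need it (together with the Nullstellensatz) to know that a nonzero $\bar f\in\overline S_d$ does not vanish identically on $L$, i.e.\ that $f\notin\sqrt{(l)}=(l)$; the paper needs the parallel, slightly weaker, assumption that $(l)$ is radical when it chooses $p\in V(l)\setminus V(F)$ from $F\notin(l)$. In every application in the paper $l=x_i$ is a variable, so $(l)$ is prime and both arguments go through, but the generality of the statement requires this to be said. Second, your closing claim that the construction ``reduces to choosing the points in the complement of finitely many proper closed subsets of $L$'' is not literally accurate, since the closed set to be avoided at step $j+1$ depends on the points already chosen; the clean fix is that the locus of $q$-tuples imposing the maximal number of conditions on $\overline I_d$ is open in $L^q$ (lower semicontinuity of the rank of the evaluation matrix) and nonempty by your sequential construction, hence dense because $L^q$ is irreducible --- this plays the role of the paper's semicontinuity step. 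Finally, a minor point: your parenthetical that saturation of $I$ ensures $\overline I$ ``faithfully records'' the restriction is not quite right, as $\overline I$ may fail to be saturated even when $I$ is (this failure is exactly what \Cref{GCI} measures); fortunately your argument never uses that claim.
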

\bpf 
$(1)\Rightarrow (2)$ Since $1\cdot l \in J_{Y_0}$ and $l$ is regular on $S/I$ we get 
$$(I \cap J_{Y_0}:(l))=(I:(l)) \cap (J_{Y_0}:(l))=(I:(l))\cap S =(I:(l))=I $$
Using this equality and the restriction sequence 
$$ 0\to S/(I\cap J_{Y_0}:l)(-i) \xrightarrow{\cdot l} S/(I\cap J_{Y_0}) \twoheadrightarrow S/(I\cap J_{Y_0},l) \to 0 $$
we get
$$H_{d}(S/I\cap J_{Y_0})=H_{d-i}(S/I)+H_{d}(S/(I\cap J_{Y_0},l)) \leq H_{d-i}(S/I)+H_{d}(\overline{S}) .$$

\noindent $(2)\Rightarrow (1)$ Suppose there exists a positive integer $q$ satisfying the inequality in (2). We proceed by induction on $j$ where 
$$ 0 \leq j \leq H_{d-i}(S/(I:l))+H_{d}(\overline{S})-H_{d}(S/I)$$
to construct a set $Y(j)$ of $j$ reduced points in $V(l)$ that satisfies  
\begin{equation}\label{eq: j points}
H_{d}(S/(I \cap J_{Y(j)}))=H_{d}(S/I)+j. 
\end{equation}

If $j=0$, let $Y(0) =\emptyset$. For $0\leq j< q$ suppose
there is a set $Y(j) \subseteq V(l)$ that satisfies \Cref{eq: j points}. Since $j+1\leq q$ assumption (2) gives
 \[
 j+1 \leq q\leq  H_{d-i}(S/(I:l))+H_{d}(\overline{S})-H_{d}(S/I)
 \]
from which it follows 
 $$H_{d}(S/(I \cap J_{Y(j)})) - H_{d-i}(S/(I:l)) \leq H_{d}(\overline{S})-1. $$
Recall that $(I \cap J_{Y(j)}:l)=(I:l)$ hence
\begin{equation}\label{idk}
H_{d}(S/(I \cap J_{Y(j)})) - H_{d-i}(S/(I \cap J_{Y(j)}:l)) \leq H_{d}(\overline{S})-1
\end{equation}
Using \Cref{idk} and the restriction sequence
$$ 0 \to S/(I \cap J_{Y(j)}:(l))(-i) \xrightarrow{\cdot l} S/(I\cap J_{Y(j)}) \twoheadrightarrow S/(I \cap J_{Y(j)}+l) \to 0$$
We get
$$1 \leq H_{d}(\overline{S})-H_{d}(S/(I \cap J_{Y(j)}+l))=H_{d}((I \cap J_{Y(j)}+l)/(l)) $$
Therefore there exists $F \in [I \cap J_{Y(j)}+l]_d \setminus [(l)]_d$. Let $p \in V(l)\setminus V(F)$. Such $p$ exists because $F\not\in (l)$. Further note
\begin{equation*}
    F \in ((I \cap J_{Y(j)}+l)/l) \Rightarrow V(F)\supseteq V(I \cap J_{Y(j)}) \cap V(l) \supseteq Y(j).
\end{equation*}
Since $p \not \in V(F)$ we get $Y(j+1)=Y(j) \cup \{p\}$ has $j+1$ distinct points. 
We obtain 
$$H_{d}(S/(I \cap J_{Y(j+1)}))
\geq  H_{d}(S/ (I \cap J_{Y(j)}))+1=H_{d}(S/I)+j+1,$$
where the first inequality follows since $F \in [I \cap J_{Y(j)}]_d\setminus [I \cap J_{Y(j+1)}]_d$ and the last equality follows from the induction hypothesis. Finally, the exact sequence below yields the other inequality.
$$ 0 \to \cfrac{S}{I \cap J_{Y(j+1)}} \to S/I \oplus S/J_{Y(j+1)}\to S/I+J_{Y(j+1)}\to 0$$
\begin{equation} \label{eq: j points upper bound}
    H_{d}(S/(I \cap J_{Y(j+1)})) \leq H_{d}(S/I) + H_{d}(S/J_{Y(j+1)}) \leq H_{d}(S/I)+j+1.
\end{equation}

We have shown the set $Y(q)$ satisfies \Cref{eq: j points}. By semicontinuity of Hilbert functions a general set $Y_0$ of $q$ points then satisfies $H_{d}(S/(I \cap J_{Y_0})) \geq H_{d}(S/I)+q$. As shown in \Cref{eq: j points upper bound}, the opposite inequality is always satisfied, so \Cref{eq: q points} follows.
\epf 

In the remainder of the section we prove different numerical criteria that are used in \Cref{123}. \Cref{not: s'} therein is in effect.

\bpr \label{NumFact 1}
Let $d\in \N$ satisfy $d\geq 6$. Then $ s'''_d<2\cdot s_{d-3} $.
\epr 

\bpf 
The table below shows the inequality for $6\leq d \leq 8$.
\begin{center}
\begin{tabular}{ |c|c|c|c| }
 \hline
 & $d=6$ & $d=7$ & $d=8$ \\
 \hline
 $s'''_d$ & $4$ & $4$ & $5$  \\
 \hline
 $2\cdot s_{d-3}$ & $6$ & $8$& $10$ \\
 \hline 
 \end{tabular}
 \end{center}
Next we show $s'''_{d+3} <2\cdot s_d$ for all $d\geq 6$. Let $d=6k+i$ where $1\leq k$ and $ 0\leq i \leq 5$. By \Cref{FORMULA123} we get 
\begin{equation*}s'''_{d+3}=3k+\LF\frac{i+3}{2} \RF + 1 \leq 3k+5 \text{ since } i \leq 5.    
\end{equation*}
Furthermore, by \Cref{FORMULA123} (1) we have $s_{6k}=3k^2+3k+1\leq s_d$. Since $1 \leq k$ the strict inequality holds and establishes
\begin{equation*}
    s'''_d \leq 3k+5 < 2\cdot s_{6k}=2\cdot(3k^2+3k+1) \leq 2\cdot s_d \qedhere 
\end{equation*} 
\epf 

\begin{lemma} \label{NumFact 2}
Let $d\in N$ be such that $d\geq 6$. We state and prove all cases of the inequality \Cref{eq: NumFact 2} used in the proof of \cref{lem: Chandler}.
\begin{enumerate}
    \item $\frac{s'_d}{2} \leq s'_{d-1}  $
    \item $\frac{s''_d}{2} \leq s''_{d-2}  $
    \item $\frac{s'''_d}{2} \leq s'''_{d-3}$
\end{enumerate}
\end{lemma}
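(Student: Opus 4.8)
The plan is to substitute the closed-form expressions for $s'_d$, $s''_d$ and $s'''_d$ recorded in \Cref{FORMULA123} and reduce each of the three inequalities to an elementary comparison of floor functions, settled by a finite residue analysis. All three bounds have the shape $s_d/2 \leq s_{d-k}$, where $k$ is the largest weight of the relevant line; the arithmetic fact driving every case is that halving the (quasi-linear) Hilbert function roughly halves its leading coefficient, so the $-k$ shift more than compensates once $d \geq 6$. I would begin by recording the small values $d = 6,7,8$ as a sanity table, and then prove the general statement for large $d$ by splitting on residues.

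For part (3) I would write $d = 2m + \epsilon$ with $\epsilon \in \{0,1\}$ and use $s'''_d = \lfloor d/2\rfloor + 1$, so that $s'''_d/2 \leq s'''_{d-3}$ becomes $\lfloor d/2\rfloor \leq 2\lfloor (d-3)/2\rfloor + 1$; evaluating $\lfloor (d-3)/2\rfloor$ in each parity yields the requirement $m \geq 3$ in the even case and $m \geq 1$ in the odd case, both implied by $d \geq 6$. Part (2) is handled identically: with $s''_d = \lfloor d/3\rfloor + 1$ and $d = 3m + i$, $i\in\{0,1,2\}$, the claim reduces to $\lfloor d/3\rfloor \leq 2\lfloor (d-2)/3\rfloor + 1$, and a three-way split on $i$ shows the worst residue still needs only $m \geq 1$, again covered by $d\geq 6$.

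The one genuinely delicate case, and the step I expect to be the main obstacle, is part (1), because $s'_d$ is \emph{not} a single floor expression: it drops by one exactly when $d \equiv 1 \pmod 6$. Here I would set $d = 6k + i$ with $0 \leq i \leq 5$ and track both $s'_d$ and $s'_{d-1}$ through this exceptional residue. The subtle point is that the shift $d \mapsto d-1$ moves the residue class, so for each $i$ I must record whether $d$ or $d-1$ lands in the class $1 \pmod 6$; the case $i = 1$ (where $s'_d$ itself is reduced, making the inequality slack) and the case $i = 2$ (where instead $s'_{d-1}$ is reduced, making it tight) behave oppositely and must be separated carefully. Running through all six residues, the binding constraints occur at $i \in \{0,2\}$, where the inequality becomes $(k+1)/2 \leq k$ and so requires $k \geq 1$; since $d \geq 6$ forces $k \geq 1$ in precisely these residue classes, the lemma follows. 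I would close by noting that the hypothesis $d \geq 6$ is exactly what pushes $k$ (respectively $m$) past the single threshold value appearing in each worst case, so the bound is sharp in that sense.
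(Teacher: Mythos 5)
Your proposal is correct and follows essentially the same route as the paper: both substitute the closed-form Hilbert functions from \Cref{FORMULA123}, write $d$ in terms of its residue modulo $6$, $3$, and $2$ respectively, and verify the worst residue classes, with $d\geq 6$ supplying exactly the threshold ($k\geq 1$, $m\geq 1$, $s\geq 3$) needed in each case. Your identification of $i\in\{0,2\}$ as the binding residues in part (1), and of the shifted class $d-1\equiv 1 \pmod 6$ as the delicate point, matches the paper's case table for $s'_{d-1}$ precisely.
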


\bpf 
$(1)$ Let $d=6p+i$ where $p\geq 1$. Using \Cref{FORMULA123} we have
\begin{equation*}    
s'_d=
 \begin{cases}
          p   &i=1   \\   
        p +1 &\text{otherwise} 
\end{cases}
\qquad \text{ and } \qquad 
s'_{d-1}=
 \begin{cases}
          p   &i=0,2   \\   
        p+1 &\text{otherwise.} 
\end{cases}
\end{equation*}
Hence it follows
\begin{equation*}
    \frac{s'_d}{2} \leq \frac{p+1}{2} \leq p \leq  s'_{d-1}.  
\end{equation*}

\noindent $(2)$ Let $d=3m+j$ where $m \geq 2$. 
Using \Cref{FORMULA123} we have
\begin{equation*} 
s''_d=\left \lfloor \frac{d}{3}\right \rfloor+1
\qquad \text{ and } \qquad
s''_{d-2}=
 \begin{cases}
          m   &j=0,1   \\   
        m+1 &j=2.
\end{cases}
\end{equation*}
Then
\begin{equation*}
\frac{s''_d}{2}= \frac{m+1}{2} \leq m  \leq s''_{d-2}.
\end{equation*}

\noindent $(3)$ Let $d=2s+t$ where $s\geq 3$. Recall from \Cref{FORMULA123}  that
\begin{equation*}  
s'''_d=\left \lfloor \frac{d}{2} \right \rfloor+1
\qquad \text{ and } \qquad
s''_{d-3}=
 \begin{cases}
          s-1   &t=0   \\   
        s &t=1.
\end{cases}
\end{equation*}
 It follows that
\begin{equation*}
    \frac{s'''_d}{2}=\cfrac{s +1 }{2} \leq s-1\leq s'''_{d-3}.\qedhere
\end{equation*}.
\epf 

\section{Secant varieties to the Veronese variety} \label{App: Secant Variety}
\begin{definition}\label{def: Veronese Variety}
Let $S$ be a polynomial ring. Define the {\em $d$-th Veronese variety} as $V^d=\Proj(k[S_d])$.
\end{definition}

\begin{remark}\label{rem: veronese embedding}
The Veronese variety can be embedded in (straight) projective space as follows:
 Let $S=k[x_0,\ldots, x_n]$ and $N=s_d-1$. Let $m_0,\ldots, m_N$ enumerate the monomials in $S_d$. Consider the $k$-algebra homomorphism
\begin{equation}\label{eq: veronese map}
\begin{split}
\theta: k[y_0,\ldots,y_N] &\to k[x_0,\ldots, x_n] \\
y_i &\to m_i.    
\end{split}
\end{equation}
Set $R=k[y_0,\ldots,y_N]$ to be the domain of $\theta$ and $I$ to be the kernel of $\theta$. Then \Cref{eq: veronese map}
gives rise to an isomorphism $R/I\cong k[S_d]$ which in turn induces an isomorphism of projective varieties $V^d\cong \Proj(R/I)\subset \Proj(R)= \PP^N_k$. 

Geometrically $\theta$ induces a map of projective varieties 
\begin{equation}\label{eq: psi}
\psi:\PP\dashrightarrow V^d, \psi(p)=[m_0(p):\cdots: m_N(p)]
\end{equation}
 defined on $U = \{ p \in \Proj(S) \; | \; \theta(R_+) \not \subseteq p \}$.
\end{remark}

The main goal of this section is to establish the following proposition.
\begin{TheoremSecantVariety}
Let $\PP=\PP(1,a_1,\ldots,a_n)$. For a fixed $d\geq a_n$ the following are equivalent:
\begin{enumerate}
    \item if $X$ is a set of $r$ general double points, ${H_{d}(S/I_X)=\min\{s_d, r(n+1)\}}$
    \item $\dim_k(\sigma_r(V^d))=\min\{s_d-1, r(n+1)-1\}$.
\end{enumerate}
\end{TheoremSecantVariety}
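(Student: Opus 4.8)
The plan is to reduce the stated equivalence to the single master identity
\[
H_{d}(S/I_X) = \dim_k \sigma_r(V^d) + 1,
\]
valid for a general set of double points $X=\{2p_i\}_{i=1}^r$ with $p_i\in U$, and then observe that the equivalence of (1) and (2) is pure arithmetic: substituting the identity into (1) turns $H_{d}(S/I_X)=\min\{s_d,r(n+1)\}$ into $\dim_k\sigma_r(V^d)+1=\min\{s_d,r(n+1)\}$, which rearranges to $\dim_k\sigma_r(V^d)=\min\{s_d-1,r(n+1)-1\}$, and that is exactly (2). So all the content lies in establishing the master identity via a Terracini-type computation of the affine tangent spaces to $V^d$.

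First I would set up the tangent-space/double-point dictionary in the affine cone. Lift $\psi$ to $\hat\psi:\AA^{n+1}\to\AA^{N+1}$, $\hat\psi(p)=(m_0(p),\ldots,m_N(p))$, with differential the Jacobian $J(p)=\big(\tfrac{\partial m_i}{\partial x_j}(p)\big)_{0\le i\le N,\,0\le j\le n}$; the affine cone over the embedded tangent space $T_{\psi(p)}V^d$ is the column span of $J(p)$. The weighted Euler relation $\sum_j a_j x_j\,\tfrac{\partial m_i}{\partial x_j}=d\,m_i$ shows $\hat\psi(p)$ itself lies in this span, so a linear form $\ell=\sum_i c_i y_i$ annihilates the cone iff it kills every column of $J(p)$, i.e. iff $f:=\theta(\ell)=\sum_i c_i m_i$ has $\tfrac{\partial f}{\partial x_j}(p)=0$ for all $j$ (whence $f(p)=0$ automatically by Euler). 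This says $\ell$ annihilates the span of the cones over $T_{\psi(p_1)}V^d,\ldots,T_{\psi(p_r)}V^d$ precisely when $f\in(I_X)_d$, so dualizing gives
\[
\dim_k (I_X)_d = (N+1) - \dim_k \big\langle \widehat{T_{\psi(p_1)}V^d},\ldots,\widehat{T_{\psi(p_r)}V^d}\big\rangle .
\]

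Next, invoking Terracini, I would finish the dimension count. Since $V^d\subseteq\PP^N_k$ is an irreducible reduced projective variety, the classical Terracini lemma gives, for general $p_1,\ldots,p_r$, that the embedded tangent space to $\sigma_r(V^d)$ at a general point of the secant span equals $\langle T_{\psi(p_1)}V^d,\ldots,T_{\psi(p_r)}V^d\rangle$; as a general point of $\sigma_r(V^d)$ is smooth, $\dim\sigma_r(V^d)$ equals the projective dimension of this span. Passing to affine cones, $\dim_k\big\langle \widehat{T_{\psi(p_1)}V^d},\ldots\big\rangle=\dim\sigma_r(V^d)+1$, and combining with the previous display together with $N=s_d-1$ yields $\dim_k(I_X)_d=s_d-\dim\sigma_r(V^d)-1$. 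Hence $H_{d}(S/I_X)=s_d-\dim_k(I_X)_d=\dim\sigma_r(V^d)+1$, which is the master identity.

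The hard part will be making the cone computation of the first step rigorous in the weighted setting: unlike the straight case, $\psi$ is only a rational map defined on $U$ and need not be an embedding, so I must verify that for a general $p\in U$ the \emph{intrinsic} Zariski tangent space of $V^d$ at the smooth point $\psi(p)$ agrees with the embedded tangent space read off from $J(p)$. This is where the hypothesis $d\ge a_n$ should enter, guaranteeing that a general point of $\PP(1,a_1,\ldots,a_n)$ lands in the smooth locus of $V^d$ and that $\hat\psi$ is unramified there, so that $J(p)$ has the expected rank and the column-span description of the cone is correct. Reconciling the quotient-singularity structure of the weighted space with the tangent space of its Veronese image is the technical heart of the argument, and it is precisely the delicacy that justifies relegating this proof to \Cref{App: Secant Variety}.
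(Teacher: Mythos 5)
Your proposal is correct and takes essentially the same route as the paper: both reduce the equivalence to the identity $H_{d}(S/I_X)=\dim(\sigma_r(V^d))+1$, proved by identifying $[I_X]_d$ with the orthogonal complement of the span of the affine tangent cones to $V^d$ at the points $\psi(p_i)$ (the paper's \Cref{prop: Lasker}) and then computing the dimension of that span via the first Terracini lemma (\Cref{1st Terracini Lemma}) together with irreducibility of the secant variety, the equivalence of (1) and (2) then being pure arithmetic. The verification you defer as the technical heart---that $\psi(p)$ is a smooth point of $V^d$ whose affine tangent cone is exactly the column span of the Jacobian, with the weight-$1$ coordinate and $d\geq a_n$ doing the work---is precisely the content of the paper's \Cref{LASKERR}, which establishes it by observing that the monomials $x_0^{d}$ and $x_0^{d-a_i}x_i$ identify the chart $U_0'$ of $V^d$ with $\AA^n_k$, so that $\psi$ restricts to an isomorphism $U_0\to U_0'$ and the intrinsic and embedded tangent spaces agree.
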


To show this we make use of tangent spaces which we now define.

\begin{definition} \label{def: tangent space}
Let $p$ be a point of an affine variety $U$ with defining ideal $\fm_p$. The {\em Zariski tangent space} to $U$ at $p$ is the vector space 
\[
T_p(U)=\Hom_k(\fm_p/\fm_p^2,k).
\] 

If $p$ is a point of a projective variety $X$, the projective tangent space to $X$ at $p$, denoted $\T_p(X)$ is the projectivization of the $T_{\widetilde{p}} \widetilde X$, where $\widetilde{X}$ is affine cone over $X$ and $\widetilde{p}$ is any point of the affine cone that corresponds to $p\in X$.
\end{definition}

\begin{lemma}\label{lem: differential map}
Consider a map  $g:\AA_k^n \to \AA_k^N$ given by $g(p)=(g_1(p),\ldots, g_N(p))$, where $g_i\in k[x_1,\ldots, x_n]$. For any $p\in \AA^n$ the induced map
\[
\d g:T_p\AA^n\to T_{g(p)} \AA^N \text{ is represented by the Jacobian matrix } \begin{bmatrix} \frac{\partial g_i}{\partial x_j}\big |_p \end{bmatrix}_{\substack{1\leq i\leq N\\ 1\leq j\leq n}}.
\]
\end{lemma}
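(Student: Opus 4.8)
The plan is to unwind the definition of the differential directly from \Cref{def: tangent space} and reduce everything to the first-order Taylor expansion of the coordinate functions $g_i$. First I would recall that the polynomial map $g$ corresponds to the $k$-algebra homomorphism $g^\sharp\colon k[y_1,\ldots,y_N]\to k[x_1,\ldots,x_n]$ determined by $y_i\mapsto g_i$. Writing $q=g(p)$, this homomorphism carries the maximal ideal $\fm_q=(y_1-q_1,\ldots,y_N-q_N)$ into $\fm_p=(x_1-p_1,\ldots,x_n-p_n)$, because $g^\sharp(y_i-q_i)=g_i-g_i(p)$ vanishes at $p$. Consequently $g^\sharp$ descends to a $k$-linear cotangent map
\[
(\d g)^\vee\colon \fm_q/\fm_q^2\longrightarrow \fm_p/\fm_p^2,
\]
and by \Cref{def: tangent space} the differential $\d g\colon T_p\AA^n\to T_q\AA^N$ is precisely the $k$-dual $\Hom_k(-,k)$ of $(\d g)^\vee$.

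Next I would compute $(\d g)^\vee$ in the distinguished bases. The classes of $x_1-p_1,\ldots,x_n-p_n$ and of $y_1-q_1,\ldots,y_N-q_N$ form bases of the cotangent spaces $\fm_p/\fm_p^2$ and $\fm_q/\fm_q^2$, respectively. The essential input is the Taylor expansion of a polynomial about $p$: for each $i$ one has
\[
g_i(x)-g_i(p)=\sum_{j=1}^n \frac{\partial g_i}{\partial x_j}\bigg|_p\,(x_j-p_j)+h_i,
\qquad h_i\in \fm_p^2 .
\]
Reducing modulo $\fm_p^2$ kills the remainder $h_i$ and shows that $(\d g)^\vee$ sends the basis vector $y_i-q_i$ to $\sum_{j} \frac{\partial g_i}{\partial x_j}|_p\,(x_j-p_j)$. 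Hence the matrix of $(\d g)^\vee$ in these bases has $(j,i)$-entry $\frac{\partial g_i}{\partial x_j}|_p$, that is, it is the transpose of the Jacobian.

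Finally, passing to the dual interchanges rows and columns: the matrix of the dual map $\d g$ in the dual bases is the transpose of the matrix of $(\d g)^\vee$, which returns the Jacobian
\[
\Bigl[\,\tfrac{\partial g_i}{\partial x_j}\big|_p\,\Bigr]_{\substack{1\le i\le N\\ 1\le j\le n}}
\]
itself, as asserted. The argument is essentially formal, so I do not anticipate a genuine obstacle; the only points requiring care are the bookkeeping of the transpose that appears when passing from the cotangent map $(\d g)^\vee$ to its dual $\d g$, and the (automatic) verification that the higher-order remainder $h_i$ lies in $\fm_p^2$, so that it vanishes in the cotangent space and only the linear term survives.
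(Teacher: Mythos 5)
Your proof is correct and follows essentially the same route as the paper's: both pass to the induced $k$-algebra map on coordinate rings, observe it carries $\fm_{g(p)}$ into $\fm_p$ and hence induces the cotangent map $\fm_{g(p)}/\fm_{g(p)}^2 \to \fm_p/\fm_p^2$, compute that map in the standard bases via the first-order Taylor expansion (the remainder lying in $\fm_p^2$), and recover the Jacobian after dualizing. The only difference is cosmetic bookkeeping of the transpose, which you handle correctly.
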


We now analyze the projective tangent space to the Veronese variety $V^d$.

\bpr \label{LASKERR}
Let $\PP=\PP(1,a_1,\ldots,a_n)$, let $d\geq a_n$ be an integer and  denote $N=s_d-1$. 
Using the notation introduced in \Cref{eq: veronese map} and \Cref{eq: psi}, consider a point $p\in U_0=\PP\setminus V(x_0)$ and set $U'_0=V^d\cap(\PP_k^N\setminus V(y_0))$. Then 
\begin{enumerate}
\item The inclusion $k[S_d]\hookrightarrow S$ induces an isomorphism between $U_0$ and $U_0'$, both of which are isomorphic to $\AA_k^n$. In particular, the points $p$ and $\psi(p)$ are smooth on $\PP$ and $V^d$, respectively.
\item The point $\psi(p)$ is defined and the projective tangent space $\T_{\psi(p)}(V^d)$ is given by
\begin{equation}\label{eq: tangent space}
\T_{\psi(p)}(V^d) \cong \PP \left \langle \left [ \frac{\partial m_0}{ \partial x_i} (p),\cdots , \frac{\partial  m_N}{ \partial x_i}(p)\right ] \; \Big | \; 0\leq i \leq n\right \rangle.
\end{equation}
\end{enumerate}
\epr

\begin{proof}
(1) Let $S=k[x_0,\ldots, x_n]$ with $\deg(x_i)=a_i$. Recall that $\PP=\Spec(S)$. Moreover the coordinate ring of $U_0$ is $S[x_0^{-1}]_0\cong k[x_1,\ldots, x_n]$ since $\deg(x_0)=1$. See the description of the coordinate rings of $U_i$ in \Cref{eq: affine charts}  for a justification of this isomorphism. Since the coordinate ring of $U_0$ is regular, all points of $U_0$ are smooth.

Now consider $V^d=\Spec(k[S_d])$. 
The following are monomials in $S_d$ 
\[
m_0=x_0^d, m_1=x_0^{d-a_1}x_1, \ldots,  m_n=x_0^{d-a_n}x_n, 
\]
and every other monomial $m$ in $S_d$ can be written in the form
\[
m=\prod_{i=0}^n x_i^{e_i}=\frac{x_0^{e_0}\prod_{i=1}^n m_i^{e_i}}{x_0^{\sum_{i=1}^n e_i(d-a_i)}}=\frac{x_0^{\sum_{i=0}^n e_ia_i}\prod_{i=1}^n m_i^{e_i}}{x_0^{\sum_{i=1}^n e_id}}=\frac{m_0\prod_{i=1}^n m_i^{e_i}}{m_0^{\sum_{i=1}^n e_i}}.
\]
It follows that $k[S_d,x_0^{-d}]_0\cong k[\frac{m_1}{m_0},\ldots, \frac{m_n}{m_0}]$. Since $\frac{m_1}{m_0},\ldots, \frac{m_n}{m_0}$ each contain a distinct variable $x_i$, they are algebraically independent. Thus the coordinate ring of $U'_0$ is regular and we deduce that $V^d$ is smooth at $q$ whenever $q\in U'_0$. In particular, the points $\psi(p)$ with $p\in U_0$ are smooth points of $V^d$.

The  inclusion $\iota: k[S_d]\hookrightarrow S$  yields upon inverting powers of $x_0$ the identity map 
\[
 k\left [\frac{m_1}{m_0},\ldots, \frac{m_n}{m_0}\right]=k[S_d, x_0^{-d}]\to S[x_0^{-1}]=k\left[\frac{x_1}{x_0},\ldots, \frac{x_n}{x_0}\right]
 \]
  since $m_i/m_0\mapsto x_i/x_0=m_i/m_0$ for $1\leq i\leq n$.

(2) The point $\psi(p)$ is defined since $m_0(p)\neq 0$.

Consider the affine cone $\widetilde{V^d}$ of $V^d$ and the regular map
 \[
 g: \AA_k^{n+1} \to \AA_k^{N+1},  g(p)=(m_0(p), \ldots, m_N(p)).
 \]
 The map induced by $g$ on coordinate rings is denoted $\theta$ in \Cref{eq: veronese map}.
 
Let $\fm$ denote the ideal of $p$ in  $\AA_k^{n+1}$, $\fm'$ denote the ideal of $g(p)$ in  $\AA_k^{N+1}$, and $\fm''$ denote the ideal of $g(p)$ in  $A'=k[\widetilde{V^d}]$. Let $\pi:k[\AA_k^{N+1}]\to A'$ denote the canonical projection and $\iota:k[S_d]\to S=k[A^{n+1}]$ denote the inclusion. These maps  satisfy the identity $\theta= \iota\circ\pi$. There are dual commutative diagrams as follows
 \[
 \begin{tikzcd}[column sep=1.5em]
 &\fm'/\fm'^2  \arrow{dl}{\theta}   \arrow{dr}{\pi} \\
 \fm/\fm^2 && \fm''/\fm''^2  \arrow{ll}{\iota}
\end{tikzcd}
\quad
 \begin{tikzcd}[column sep=1.5em]
 &T_{g(p)}\AA^{N+1}   \\
 T_p \AA^{n+1}  \arrow{ur}{\d g}  \arrow{rr}{\d \iota}  && T_{g(p)} \widetilde{V^d} \arrow{ul}{\d \pi}.
\end{tikzcd}
\]
 By part (1), the horizontal map labeled $\iota$ and hence also its dual $\d \iota$ are isomorphisms. Since $\pi$ is surjective, $\d \pi$ is injective. Hence $T_{g(p)} \widetilde{V^d}$ is isomorphic to the image of $\d \pi$ which coincides with the image of $\d g$.
 By \Cref{lem: differential map}, the differential $\d g: T_p \AA_k^{n+1} \to  T_{g(p)} \AA_k^{N+1}$ is represented by the Jacobian matrix 
 $J=\begin{bmatrix} \frac{\partial m_i}{\partial x_j}\end{bmatrix}_{\substack{0\leq i\leq N\\0\leq j\leq n}}$.
 Therefore,  $T_{\tilde{\psi(p)}}(\widetilde{V^d})$ can be identified with the column space of $J$ and its projectivization yields 
\[
\T_{\psi(p)}(V^d) \cong \PP \left \langle \left[ \frac{\partial m_0}{\partial{x_j}} \big|_p, \ldots, \frac{\partial m_N}{\partial{x_j}} \big |_p \right] \mid 0\leq j\leq n \right\rangle.
\]
\end{proof}

In the following, we to denote by $\widetilde{V^d}$ the affine cone over $V^d$ and by $\widetilde{q}$ any affine point that corresponds to the projective point $q\in \PP_k^N$. \Cref{prop: Lasker} in standard graded case was first proven by Lasker in \cite{Lasker}. Our proof is an adaption from \cite{BO08}.
\bpr [Lasker's proposition]\label{prop: Lasker}
Let $\PP=\PP(1,a_1,\ldots,a_n)$, let $d\geq a_n$ be an integer  and set $N=s_d-1$. 
Let $p\in U_0=\PP\setminus V(x_0)$. Then the  orthogonal  complement to the tangent space to $\widetilde{V^d}$ at $\widetilde{\psi(p)}$  consists of all hypersurfaces of degree $d$ singular at $p$. In detail, there is a vector space isomorphism
 \[
 [I_p^{(2)}]_d \cong T_{\widetilde{\psi(p)}}(\widetilde{V^d})^\perp.
 \] 
\epr

\bpf 
By the Zariski-Nagata Theorem, $[I_p^{(2)}]_d$ is the vector space of homogeneous polynomials of degree $d$ whose first derivative vanishes on $p$
\begin{equation*}
     (I_p^{(2)})_d  = \left \{  f \in S_d \; \big | \; \frac{\partial f}{\partial x_i}(p)=0 \right \} 
      =\left \{  \displaystyle \sum_{i=0}^N \lambda_i m_i \; \big | \; \sum_{i=0}^N \lambda_i\frac{\partial m_i}{\partial x_i}\big |_p=0 \right \}. 
\end{equation*}

Then for any $\underline{\lambda}\in k^{N+1}$ we have
\begin{equation*}
\begin{split}
&\sum_{i=0}^N \lambda_i\frac{\partial m_i}{\partial x_i}\big |_p=0\\
 \Leftrightarrow &
 (\lambda_0,\ldots , \lambda_N)\cdot\left (\frac{\partial m_1}{\partial x_i}\big |_p,\cdots , \frac{\partial m_N}{\partial x_i}\big |_p\right )=0  \\
\Leftrightarrow &\underline{\lambda} \in  T_{\widetilde{\psi_0(p)}}(\widetilde{V^d})^\perp.  \qedhere
\end{split}
\end{equation*}
\epf 

We now define the $k$-th secant variety of a projective variety. This is a central tool in establishing \Cref{equiv}. For a more thorough study of the secant variety see \cite[Lecture 8]{Harris}. 

\bdf[Secant Variety] \label{def: secant variety}
	Let $X$  be a projective variety. For any nonnegative integer $r$, the \emph{$r$-secant variety} of $X$, denoted by $\sigma_r(X)$, is defined to be
	$$\sigma_r(X) = \overline{\bigcup_{P_1, \dots, P_r \in X} \langle P_1, \dots, P_r\rangle}^{\text{ Zariski closure}}.$$
\edf 
\noindent Note that $\sigma_r(V^n_d)$ is an irreducible variety for all $r$. (\cite[Remark 1.2]{Fantechi})
\begin{lemma}[First Terracini Lemma] \label{1st Terracini Lemma}
Let $Y$ be an a affine variety set, let $p_1,\ldots,p_r \in Y$ be general points and $z\in \left < p_1,\ldots,p_r\right >$ be a general point. Then 
\[
T_{z}(\sigma_{r}(Y))=\left <T_{p_1}(Y),\ldots,T_{p_r}(Y) \right >.
\]
\end{lemma}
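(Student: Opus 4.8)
The plan is to realize $\sigma_r(Y)$ as the closure of the image of an explicit parametrizing morphism and then read off its tangent space from the differential. In the setting relevant to this paper, $Y$ is an affine cone (namely $Y=\widetilde{V^d}$), so the span $\langle p_1,\ldots,p_r\rangle$ is a genuine linear span and I would introduce the \emph{join map}
\[
\Phi:\; \underbrace{Y\times\cdots\times Y}_{r}\times \AA_k^r \longrightarrow \AA_k^N,\qquad \Phi(p_1,\ldots,p_r,\lambda_1,\ldots,\lambda_r)=\sum_{i=1}^r\lambda_i\,p_i.
\]
By \Cref{def: secant variety}, $\sigma_r(Y)=\overline{\Ima(\Phi)}$, and a general point $z\in\langle p_1,\ldots,p_r\rangle$ with the $p_i$ general is exactly the image under $\Phi$ of a general point $w=(p_1,\ldots,p_r,\lambda)$ of the domain.

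The next step is to compute the differential of $\Phi$ at such a general $w$. Splitting the tangent space of the source as $T_w=\bigoplus_{i=1}^r T_{p_i}(Y)\oplus\bigoplus_{i=1}^r T_{\lambda_i}\AA^1$ and applying \Cref{lem: differential map} factor by factor, a tangent vector $(v_1,\ldots,v_r,c_1,\ldots,c_r)$ is sent to $\sum_i\lambda_i v_i+\sum_i c_i\,p_i$. Hence
\[
\Ima(\d\Phi_w)=\sum_{i=1}^r \lambda_i\,T_{p_i}(Y)+\sum_{i=1}^r k\,p_i.
\]
Because $Y$ is an affine cone, each ray $k\,p_i$ already lies in $T_{p_i}(Y)$, so the second sum is absorbed; and for general $\lambda$ every $\lambda_i$ is nonzero, whence $\lambda_i T_{p_i}(Y)=T_{p_i}(Y)$. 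This identifies $\Ima(\d\Phi_w)=\langle T_{p_1}(Y),\ldots,T_{p_r}(Y)\rangle$, the desired right-hand side.

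It remains to pass from the image of the differential to the tangent space of the closure $\sigma_r(Y)$, and this is the crux. One inclusion is automatic: $\Ima(\d\Phi_w)\subseteq T_{\Phi(w)}(\sigma_r(Y))$, since a morphism carries tangent vectors into the tangent space of (the closure of) its image. For the reverse inclusion I would argue by dimension count. The dimension of the image closure equals the generic rank of $\d\Phi$, and by semicontinuity this maximal rank is attained on a dense open subset of the domain. For $w$ in this open set whose image $z=\Phi(w)$ is a smooth point of $\sigma_r(Y)$, one has $\dim\Ima(\d\Phi_w)=\dim\sigma_r(Y)=\dim T_z(\sigma_r(Y))$, forcing the inclusion to be an equality; combining the two inclusions gives $T_z(\sigma_r(Y))=\langle T_{p_1}(Y),\ldots,T_{p_r}(Y)\rangle$.

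The main obstacle will be exactly this last passage: ensuring that the generic rank of $\d\Phi$ equals $\dim\sigma_r(Y)$ and that a general $z$ in the sense of the statement is simultaneously a smooth point of $\sigma_r(Y)$ and the image of a maximal-rank point of $\Phi$. This is where I would invoke generic smoothness (hence the characteristic-zero hypothesis in force in this section), together with the irreducibility of $\sigma_r(Y)$ recorded after \Cref{def: secant variety}, which guarantees that its smooth locus is dense and that the relevant dense open loci meet the preimage of a general $z$.
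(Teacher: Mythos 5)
Your proposal is correct and is essentially the same argument as \cite[Lemma 2.2]{BO08}: parametrize the cone over $\sigma_r(Y)$ by the join map, compute the Jacobian (with the rays $k\,p_i$ absorbed into the cone tangent spaces and the scalars $\lambda_i$ harmless because they are nonzero), and use generic smoothness in characteristic zero to identify the generic rank of $\d\Phi$ with $\dim\sigma_r(Y)$ at a general, hence smooth, point $z$. Since the paper's own proof of this lemma is just the citation to \cite{BO08}, your write-up supplies exactly the standard proof that the paper defers to.
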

\bpf 
See \cite[Lemma 2.2]{BO08}.
\epf

Using arguments identical to  \cite{BO08} we prove \Cref{equiv}. 

\bpf [Proof of \Cref{equiv}]
Let $X=\{2p_1,\ldots , 2p_r \}$ be a collection of general double points in $\PP$. Applying \Cref{prop: Lasker} to get the second equality, we obtain
\begin{equation*}
   [ I_X]_d= \left[\bigcap_{i=1}^r I_{p_i}^{(2)}\right]_d = \displaystyle \bigcap_{i=1}^r \left (  T_{\widetilde{\psi(p_i)}}\widetilde{V^d} \right )^{\bot}= \left < T_{\widetilde{\psi(p_1)}}\widetilde{V^d},\ldots,T_{\widetilde{\psi(p_r)}}\widetilde{V^d} \right >  ^{\bot }.
\end{equation*}
Therefore, using \Cref{1st Terracini Lemma}, we obtain
\begin{equation*}
\dim([S/ I_X]_d)=\dim_k \left \langle T_{\widetilde{\psi(p_1)}}\widetilde{V^d},\ldots,T_{\widetilde{\psi(p_r)}}\widetilde{V^d} \right \rangle =\dim_k T_{z}(\sigma_{k}(\widetilde{V^d})). 
\end{equation*}
Finally using that $\sigma_r(\widetilde{V^d})=\widetilde{\sigma_r(V^d)}$ and 
the irreducibility of $\widetilde{V^d}$, which implies irreducibility of $\sigma_r(\widetilde{V^d})$ for all $r$ \cite[p. 144, Prop 11.24]{Harris}, we get for $z$  a general point of $\sigma_r(\widetilde{V^d})$
\begin{equation*}
  \dim T_{z}(\sigma_k(\widetilde{V^d})) =\dim \sigma_k(\widetilde{V^d}) =\dim(\sigma_k(V^d))+1.
\end{equation*}
To summarize, we have obtained the identity
\[
\dim([S/ I_X]_d)=\dim(\sigma_k(V^d))+1,
\]
whence the claimed equivalence follows.
\epf

\bibliographystyle{amsalpha}
\bibliography{biblio}

\end{document}